\numberwithin{equation}{section}
\newcommand{\mc}{\mathcal}
\newcommand{\fing}{\g}
\newcommand{\finh}{\mf{h}}
\newcommand{\isomap}{{\;\stackrel{_\sim}{\to}\;}}
\newcommand{\gr}{\on{gr}}
\newcommand{\mf}{\mathfrak}
\newcommand{\Lam}{\Lambda}
\newcommand{\lam}{\lambda}
\newcommand{\cprime}{$'$}
\newcommand{\W}{\mathscr{W}}
\newcommand{\on}{\operatorname}
\newcommand{\1}{\mathbf {1}}
\newcommand{\Z}{{\mathbb Z}}
\newcommand{\Q}{{\mathbb Q}}
\newcommand{\C}{{\mathbb C}}
\newcommand{\M}{{\mathbb L}}
\newcommand{\g}{{\mathfrak g}}
\newcommand{\h}{{\mathfrak h}}
\newcommand{\wh}{{\widehat{\mathfrak h}}}
\newcommand{\wg}{{\widehat{\mathfrak g}}}
\newcommand{\I}{{\mathcal I}}
\newcommand{\J}{{\mathcal J}}
\newcommand{\CS}{{\mathcal S}}
\newcommand{\CT}{{\mathcal T}}
\newcommand{\CY}{{\mathcal Y}}
\newcommand{\mraff}{\mathrm{aff}}
\newcommand{\mrpara}{\mathrm{para}}
\newcommand{\al}{\alpha}
\newcommand{\dl}{\delta}
\newcommand{\gm}{\gamma}
\newcommand{\om}{\omega}
\newcommand{\la}{\langle}
\newcommand{\ra}{\rangle}
\newcommand{\bop}{\bigoplus}
\newcommand{\op}{\oplus}
\newcommand{\ot}{\otimes}
\newcommand{\hsl}{\widehat{\mf{sl}}_2}
\newcommand{\Vaff}{V^{\mathrm{aff}}}
\DeclareMathOperator{\Aut}{Aut}
\DeclareMathOperator{\ch}{ch}
\DeclareMathOperator{\End}{End}
\DeclareMathOperator{\wt}{wt}
\newtheorem{thm}{Theorem}[section]
\newtheorem{prop}[thm]{Proposition}
\newtheorem{lem}[thm]{Lemma}
\newtheorem{cor}[thm]{Corollary}
\newtheorem{rmk}[thm]{Remark}
\newtheorem{hyp}[thm]{Hypothesis}
\newtheorem{exm}[thm]{Example}
\begin{document}

\title
{Parafermion vertex operator algebras and W-algebras}

\author[T. Arakawa]{Tomoyuki Arakawa}
\address{Research Institute for Mathematical Sciences,
Kyoto University, Kyoto 606-8502, Japan}
\email{arakawa@kurims.kyoto-u.ac.jp}

\author[C. H. Lam]{Ching Hung Lam}
\address{Institute of Mathematics, Academia Sinica, Taipei 115, Taiwan}
\email{chlam@math.sinica.edu.tw}

\author[H. Yamada]{Hiromichi Yamada}
\address{Department of Mathematics, Hitotsubashi University, Kunitachi,
Tokyo 186-8601, Japan}
\email{yamada.h@r.hit-u.ac.jp}

\subjclass[2010]{17B69, 17B65}

\begin{abstract}
We prove the conjectual isomorphism between
the level $k$ $\hsl$-parafermion
vertex operator algebra and the $(k+1,k+2)$
minimal series $W_k$-algebra for all $k\in \mathbb{N}$.
As a consequence, we obtain the conjectural isomorphism 
between the $(k+1,k+2)$ minimal series $W_k$-algebra  
and the coset vertex operator algebra $SU(k)_1\otimes SU(k)_1/SU(k)_2$.
\end{abstract}

\maketitle

\section{Introduction}
A parafermion vertex operator algebra $K(\g,k)$ is by definition the
commutant of the Heisenberg vertex operator subalgebra $M_{\wh}(k,0)$ in
the simple vertex operator algebra $L_{\wg}(k,0)$ of the level $k$ integrable
highest weight module for an affine Kac-Moody algebra $\wg$, 
where $\g$ is a finite dimensional simple Lie algebra 
and $\mathfrak{h}$ is a Cartan subalgebra of $\mathfrak{g}$. 
Some basic properties of $K(\g,k)$ were studied in
\cite{DW1,DW2}. Their arguments heavily depend on the properties of the
parafermion vertex operator algebra $K(\mathfrak{sl}_2,k)$,
which were obtained in \cite{DLWY, DLY}. Thus,
for the study of the general parafermion vertex operator algebras,
it is essential  to 
understand the case $\g = \mathfrak{sl}_2$.

The parafermion vertex operator algebra $K(\mathfrak{sl}_2, k)$ is also known
as a $W$-algebra. 
It was conjectured over 20 years ago in the physics literature
\cite{BEHHH}  that the parafermion
vertex operator algebra $K(\mathfrak{sl}_2, k)$
is isomorphic to 
the $(k+1,k+2)$-minimal series $W$-algebra 
\cite{FKW, A2012Dec}
associated with $\mathfrak{sl}_k$. 
The purpose of this paper is to prove this conjecture.
As a consequence,
the rationality of $K(\mf{sl}_2,k)$ is established.

In \cite{DLWY, DLY} 
it was shown that $K(\mathfrak{sl}_2, k)$
is isomorphic to the simple quotient of the $W$-algebra
$W(2,3,4,5)$ of \cite{BEHHH, Hornfeck} for $k \ge 5$.
This relationship between $K(\mathfrak{sl}_2, k)$ and the simple quotient of
$W(2,3,4,5)$ is clear because a set of generators and the operator product
expansions among the generators of these two vertex operator algebras are
known \cite{BEHHH, DLWY, DLY}, and they coincide with each other.
On the other hand, the $W$-algebra $\W^\ell( \mathfrak{sl}_k)$ was constructed
in a different manner
and
the explicit operator product
expansions of generators of $\W^\ell( \mathfrak{sl}_k)$ are not known in general.
Therefore,
we should take another approach to establish a correspondence
between the vertex operator algebras $K(\mathfrak{sl}_2, k)$ 
and $\W^\ell( \mathfrak{sl}_k)$.

The key idea is the use of a decomposition
$L_{\hsl}(k,0) =  \bigoplus_{j=0}^{k-1} V_{\Z\gm - j\gm/k} \ot M^j$,
where $V_{\Z\gm - j\gm/k}$ is a simple module for
a vertex operator algebra $V_{\Z\gm}$ associated with a rank one
lattice $\Z\gm$, $\la \gm,\gm\ra = 2k$, and $M^j$ is a simple 
module for $M^0 = K(\mathfrak{sl}_2, k)$.
That is, we consider not only the vertex operator algebra
$K(\mathfrak{sl}_2, k)$ but also some of its simple modules and
take tensor product with $V_{\Z\gm - j\gm/k}$.
We shall characterize $L_{\hsl}(k,0)$ as a unique vertex operator algebra that admits such a decomposition,
see Section \ref{sec:characterization-Lk0}
for the precise statement.
The fusion rules among the simple $V_{\Z\gm}$-modules
play an important role in the argument here.
Furthermore, we shall apply
the result to show that under a certain assumption, 
a vertex operator algebra having similar properties as
the $(k+1,k+2)$-minimal series $W$-algebra $\W_{k+1,k+2}(\mathfrak{sl}_k)$
associated with $\mathfrak{sl}_k$ 
is in fact isomorphic to 
the parafermion vertex operator algebra $K(\mathfrak{sl}_2, k)$.

We note that 
by the level rank duality
$K(\mf{sl}_2,k)$ is isomorphic to the 
coset vertex algebra 
$\on{Com}_{L_{\widehat{\mf{sl}}_k}(1,0)\otimes L_{\widehat{\mf{sl}}_k(1,0)}}
(L_{\widehat{\mf{sl}}_k}(2,0))$
(\cite{Lam2}).
Therefore our result gives another conjectural isomorphism (\cite{KacWak90,BS})
\begin{align*}
\W_{k+1,k+2}(\mf{sl}_k)\cong 
\on{Com}_{L_{\widehat{\mf{sl}}_k}(1,0)\otimes L_{\widehat{\mf{sl}}_k(1,0)}}
(L_{\widehat{\mf{sl}}_k}(2,0))
\end{align*}
for all $k\geq 2$.

This paper is the final version of our unpublished preprint
``A characterization of parafermion vertex operator algebras''.

The organization of the paper is as follows.
In Section 2, we prepare some materials which will be necessary
in later sections. We review intertwining operators 
and simple current extensions.
We also recall some properties of the lattice vertex operator algebra
$V_{\Z\gm}$ and its simple modules as well as the construction
of the parafermion vertex operator algebra $K(\mathfrak{sl}_2, k)$.
In Section 3, we study a vertex operator algebra of the form
$V = \bigoplus_{j=0}^{k-1} V_{\Z\gm - j\gm/k} \ot M^j$.
Under a certain hypothesis, we show that $V$ is isomorphic to
$L_{\hsl}(k,0)$. 
In Section 4, we obtain a characterization of 
the parafermion vertex operator algebra $K(\mathfrak{sl}_2, k)$.
In Section 5, we recall some results
on W-algebras established in 
\cite{Arakawa, Ara10, A2012Dec,A-vEk}.
Finally, in Section 6, we apply the result of Section 4 
to show that $K(\mathfrak{sl}_2, k)$ is isomorphic to $\W_{k+1,k+2}(\mathfrak{sl}_k)$. 
A correspondence of the simple modules for $K(\mathfrak{sl}_2, k)$ with those for 
$\W_{k+1,k+2}(\mathfrak{sl}_k)$ is discussed as well. 
In Appendix A, we prove 
Proposition \ref{Pro:generator-of-W}
that was stated in 
\cite{FL88} without a proof.

\section{Preliminaries}
We use standard notation for vertex operator algebras and their modules
\cite{FHL, FLM, LL}. Let $(V,Y,\1,\om)$ be a vertex operator algebra and
$(M,Y_M)$ its module. Then 
\begin{align*}
Y_M(v,z) = \sum_{n \in \Z} v_n z^{-n-1}
\end{align*}
is the vertex operator associated with $v \in V$. The linear operator $v_n$
of $M$ is called a component operator. The eigenspace with eigenvalue
$n$ for the operator $L(0) = \om_1$ is denoted by $M_{(n)}$. An element
$w \in M_{(n)}$ is said to be of weight $n$ or $\om$-weight $n$ and
we write $\wt w = n$. In this paper we always assume that $\wt w \in \Q$.
The top level of $M$ means the nonzero weight subspace
$M_{(n)}$ of smallest possible $n$. We denote the top level of $M$ by $M(0)$. 
The weight of the top level $M(0)$ is called the top weight of $M$.  
The generating function
\begin{equation*}
\ch M = \sum_{n \in \Q} (\dim M_{(n)}) q^n
\end{equation*}
of $\dim M_{(n)}$ is called the character of $M$.
A vertex operator algebra $V$ is said to be of CFT-type
if $V = \bop_{n \ge 0} V_{(n)}$ and $V_{(0)} = \C\1$.

Let $M' = \bigoplus_{n \in \Q} M_{(n)}^\ast$ be the restricted dual space of
a $V$-module $M$, where $M_{(n)}^\ast$ is the ordinary dual space of $M_{(n)}$.
The adjoint vertex operator $Y_{M'}(v,z) \in (\End M')[[z,z^{-1}]]$ is
defined by
\begin{equation*}
\la Y_{M'}(v,z)w', w \ra_M =
\la w',Y_M(e^{zL(1)} (-z^{-2})^{L(0)} v,z^{-1})w \ra_M
\end{equation*}
for $v \in V$, $w \in M$ and $w' \in M'$, where $\la \,\cdot\,,\,\cdot\,\ra_M$ is
the natural pairing of $M'$ and $M$ \cite[(5.2.4)]{FHL}.
Then $(M',Y_{M'})$ is a $V$-module \cite[Theorem 5.2.1]{FHL} called the
contragredient or dual module of $M$. 
If $M$ and $M'$ are isomorphic as $V$-modules, then $M$ is said to be self-dual. 
The vertex operator algebra $V$ is said to be self-dual if $V$ is isomorphic to 
its dual $V'$ as a $V$-module.

For an automorphism $g$ of $V$, we define a $V$-module $(M \circ g, Y_{M \circ g})$ 
by setting $M \circ g = M$ as vector spaces and $Y_{M \circ g}(v,z) = Y_M(gv,z)$ for 
$v \in V$. Then $M \mapsto M \circ g$ induces a permutation on the set of 
simple $V$-modules. The $V$-module $M$ is said to be $g$-stable if 
$(M \circ g, Y_{M \circ g})$ is isomorphic to $(M,Y_M)$.

\subsection{Fusion rules}\label{subsec:fusion-rule}
We review intertwining operators for later use.
Let $V$ be a vertex operator algebra and $(U^i,Y_{U^i})$, $i = 1,2,3$ 
simple $V$-modules. 
Let $\CY(\,\cdot\,,z)$ be an
intertwining operator of type $\binom{U^3}{U^1\ U^2}$ \cite[Section 5.4]{FHL}.
We define linear operators $u^1_m$ and $u^1(n)$ from $U^2$ to $U^3$ by
\begin{align*}
\CY (u^1,z) u^2
&= \sum_{m \in \Q} u^1_m u^2 z^{-m-1}\\
&= \sum_{n \in \Z} u^1(n)u^2 z^{-n - h_1 - h_2 + h_3}
\end{align*}
for $u^i \in U^i$, $i=1,2$, $m \in \Q$ and $n \in \Z$,
where $h_i$ is the top weight of $U^i$. 
If $u^1$ is homogeneous, then the weight of the operator $u^1_m$ is
$\wt u^1_m = \wt u^1 - m - 1$ \cite[(5.4.14)]{FHL}.

In the case $U^1 = V$ and $u^1 = \1$, it is well-known that 
$\CY(\1,z) :  U^2 \rightarrow U^3$ is a homomorphism of $V$-modules 
(cf. page 285 of \cite{Li1}). 

Let $I \binom{U^3}{U^1\ U^2} = I_V \binom{U^3}{U^1\ U^2}$ be the 
space of intertwining operators of type $\binom{U^3}{U^1\ U^2}$. 

\begin{lem}\label{lem:nonzero-action}
Let $V$ be a self-dual vertex operator algebra and $U^1$, $U^2$
simple $V$-modules with integral weight. 
Assume that $I \binom{V}{U^1\ U^2} \ne 0$. 
Then the following assertions hold.

$(1)$ $U^1$ is isomorphic to the dual module $(U^2)'$ of $U^2$. 

$(2)$ Let $0 \ne \CY(\,\cdot\,,z) \in I \binom{V}{U^1\ U^2}$. 
Then for any $0 \ne u^1 \in U^1(0)$, there exists $u^2 \in U^2(0)$ such that 
the weight $0$ coefficient $u^1_{2h -1} u^2$ of $\CY(u^1,z)u^2$ is 
nonzero: $0 \ne u^1_{2h -1} u^2 \in V_{(0)}$, 
where $h$ is the top weight of $U^1$.
\end{lem}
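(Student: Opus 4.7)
My plan is to deduce both parts from the standard isomorphism identifying intertwining operators with $V$-module maps into the contragredient module, combined with self-duality of $V$ and Schur's lemma; part (2) will follow from part (1) by using the one-dimensionality of the relevant space of intertwining operators and an explicit model for $\CY$.

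For part (1), I would use the chain of natural isomorphisms
\[
I\binom{V}{U^1\ U^2} \cong I\binom{(U^2)'}{U^1\ V'} \cong I\binom{(U^2)'}{U^1\ V} \cong \Hom_V(U^1, (U^2)'),
\]
where the first is the standard adjoint isomorphism of intertwining operator spaces (see, e.g., FHL), the second uses the self-duality $V\cong V'$, and the third is Li's isomorphism $\CY \mapsto (u^1 \mapsto \CY(u^1,z)\1|_{z=0})$ valid for intertwining operators with $V$ in the second slot. A nonzero $\CY$ thus yields a nonzero $V$-module map $U^1 \to (U^2)'$, which by Schur's lemma applied to the simple $V$-modules $U^1$ and $(U^2)'$ is an isomorphism.

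For part (2), Schur's lemma gives $\dim \Hom_V(U^1,(U^2)')=1$, and so by the same chain of isomorphisms $\dim I\binom{V}{U^1\ U^2}=1$; hence $\CY$ is determined up to a nonzero scalar by the $V$-module isomorphism $\phi: U^1 \xrightarrow{\sim} (U^2)'$ produced in part (1). Reversing the isomorphisms, I would construct an intertwining operator $\CY^\phi \in I\binom{V}{U^1\ U^2}$ directly from $\phi$ and compute its top-level coefficient. The goal is to verify that, for top-level vectors $u^1 \in U^1(0)$ and $u^2 \in U^2(0)$, the element $u^1_{2h-1}u^2 \in V_{(0)}$ coming from $\CY^\phi$ equals, up to a fixed nonzero scalar, the canonical evaluation $\la\phi(u^1),u^2\ra \cdot \1$. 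Since $\phi$ is an isomorphism, the evaluation pairing $(U^2)'(0)\times U^2(0)\to \C$ is perfect, and by one-dimensionality the pairing induced by the given $\CY$ is a nonzero multiple of this perfect pairing, hence also nondegenerate; this yields (2).

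The main technical obstacle lies in carrying out the explicit construction of $\CY^\phi$ from $\phi$ and extracting its leading $z$-coefficient. Both steps require careful bookkeeping of the factor $e^{zL(1)}(-z^{-2})^{L(0)}$ that appears in the defining formulas for the contragredient vertex operator $Y_{(U^2)'}$ and the adjoint isomorphism of intertwining operator spaces. Once the explicit form of $\CY^\phi$ is in hand, the identification of the $z^{-2h}$-coefficient with the natural evaluation pairing is a direct, if delicate, calculation.
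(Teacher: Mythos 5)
Your proposal is correct and follows essentially the same route as the paper: both pass through the FHL adjoint/skew-symmetry isomorphisms to turn $\CY$ into a $V$-module isomorphism $U^1 \isomap (U^2)'$ (giving (1) by simplicity), and then identify the top-level coefficient of $\CY$ with the resulting nondegenerate evaluation pairing on $U^1(0)\times U^2(0)$ (giving (2)). The ``delicate calculation'' you defer --- tracking the factor $e^{zL(1)}(-z^{-2})^{L(0)}$, which on a top-level vector of weight $h$ collapses to the scalar $(-z^{-2})^h$ --- is exactly what the paper's chain of displayed formulas carries out.
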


\begin{proof}
Since the weights of $U^1$ and $U^2$ are integral and $V$ is self-dual, 
there are one-to-one correspondences among the four spaces
\begin{equation*}
I \binom{V}{U^1\ U^2}, \quad I \binom{(U^2)'}{U^1\ V'}, \quad 
I \binom{(U^2)'}{V'\ U^1}, \quad I \binom{(U^2)'}{V\ U^1}
\end{equation*}
of intertwining operators by \cite[Propositions 5.4.7 and 5.5.2]{FHL}. 
More precisely, let $\psi: V \rightarrow V'$ be an isomorphism of  $V$-modules 
and $0 \ne \CY(\,\cdot\,,z) \in I \binom{V}{U^1\ U^2}$. 
We consider three intertwinig operators 
\begin{equation*}
\CY^1(\,\cdot\,,z) \in I \binom{(U^2)'}{U^1\ V'}, 
\quad \CY^2(\,\cdot\,,z) \in I \binom{(U^2)'}{V'\ U^1}, 
\quad \CY^3(\,\cdot\,,z) \in  I \binom{(U^2)'}{V\ U^1}
\end{equation*}
defined by 
\begin{align}
\la \CY^1(u^1,z) \psi(a), u^2\ra_{U^2} 
&= \la \psi(a), \CY(e^{zL(1)}(-z^{-2})^{L(0)} u^1, z^{-1}) u^2\ra_V, \label{eq:Y1}\\
\CY^2(\psi(a),z)u^1 &= e^{zL(-1)}\CY^1(u^1,-z)\psi(a), \label{eq:Y2}\\
\CY^3(a,z)u^1 &= \CY^2(\psi(a),z)u^1, \label{eq:Y3}
\end{align}
respectively for $a \in V$, $u^1 \in U^1$ and $u^2 \in U^2$ 
\cite[(5.5.4), (5.4.33)]{FHL}.

Let $a = \1$ and $0 \ne u^1 \in U^1(0)$. 
Then $\CY^3(\1,z) \ne 0$ by \cite[Proposition 11.9]{DL}. 
Since $U^1$ and $(U^2)'$ are simple by our assumption and 
\cite[Proposition 5.3.2]{FHL}, $\CY^3(\1,z) : U^1 \rightarrow (U^2)'$ is 
in fact an isomorphism of  $V$-modules. 
Thus the assertion (1) holds.

We can choose $u^2 \in U^2(0)$ so that
\begin{equation}\label{eq:Y3-nonzero}
\la \CY^3(\1,z)u^1, u^2\ra_{U^2} \ne 0, 
\end{equation}
for $\CY^3(\1,z)u^1$ is a nonzero element of the dual space 
$U^2(0)^\ast$ of  $U^2(0)$. 
The weight of $u^2$ coincides with that of $u^1$. 
It follows from \eqref{eq:Y2}, \eqref{eq:Y3} and \eqref{eq:Y3-nonzero} that 
\begin{equation}\label{eq:Y1-nonzero}
\la e^{zL(-1)} \CY^1(u^1,-z)\psi(\1), u^2\ra_{U^2} \ne 0.
\end{equation}

Since $u^2 \in U^2(0)$, we have $L(1)u^2 = 0$ and  
\begin{equation}\label{eq:Y1-nonzero-2}
\la \CY^1(u^1,-z)\psi(\1), u^2\ra_{U^2} \ne 0
\end{equation}
by \eqref{eq:Y1-nonzero} and \cite[(5.2.10)]{FHL}. 
Recall that $u^1 \in U^1(0)$ and the weight of $u^1$ is $h$. Then 
\begin{equation*}
e^{-zL(1)} (-z^{-2})^{L(0)} u^1 = (-z^{-2})^h u^1
\end{equation*}
and so 
\begin{equation*}
\la \CY^1(u^1,-z) \psi(\1), u^2\ra_{U^2} 
= (-z^{-2})^h \la \psi(\1), \CY(u^1, -z^{-1}) u^2\ra_V,
\end{equation*}
which is nonzero by \eqref{eq:Y1-nonzero-2}. 
Since $\psi(\1)$ is an element of the dual space $V_{(0)}^\ast$ of $V_{(0)}$, 
we conclude that $u^1_{2h -1} u^2$ is a nonzero element of $V_{(0)}$.
\end{proof}

\subsection{Simple current extensions}
A simple module $M$ of a vertex operator algebra is called a simple current 
if the tensor product $M \boxtimes N$ exists and it is a simple module for 
every simple module $N$.
We review some known results about simple current extensions of 
vertex operator algebras for later use.

We assume the following hypothesis.

\begin{hyp}\label{hypothesis:0}
$(1)$ $V$ is a simple, self-dual, rational and $C_2$-cofinite vertex operator algebra of CFT-type. 

$(2)$ The top weight of any simple $V$-module $M$ is non-negative,
and is zero only if $M=V$.

$(3)$ $U^i$, $i \in D$ is a set of 
simple current $V$-modules with integral weight, where $D$ is a finite abelian group and $U^0 = V$. 
The fusion rules among $U^i$'s are 
\begin{equation*}
U^i \times U^j = U^{i+j}, \quad i, j \in D.
\end{equation*}
\end{hyp}


%

Let $\I_{i,j}^{i+j} = I_V \binom{U^{i+j}}{U^i\ U^j}$ be the space of intertwining 
operators of type $\binom{U^{i+j}}{U^i\ U^j}$ for $i, j \in D$. 
Let $\CY_{i,j}^{i+j}$ be a nonzero element of $\I_{i,j}^{i+j}$. 
Since $\I_{i,j}^{i+j}$ is one dimensional, $\CY_{i,j}^{i+j}$ is unique up to 
a nonzero scalar multiple. 
However it is far from trivial whether there are $\lambda_{i,j} \in \C^\times$ such that 
$\{\lambda_{i,j}\CY_{i,j}^{i+j} \}_{i,j \in D}$ gives a vertex operator algebra structure 
on a direct sum $\bigoplus_{i \in D} U^i$. 

For $i,j \in D$, define $\Omega(i,j) \in \C^\times$ by 
\begin{equation}\label{eq:def-Omega}
\CY_{i,j}^{i+j}(u,z)v = \Omega(i,j) e^{z L(-1)} \CY_{j,i}^{i+j}(v,-z)u
\end{equation}
for $u \in U^i$, $v \in U^j$ \cite[Definition 2.2.4]{Carnahan} 
(see also \cite{CKL, vEMS}). 
Such a constant $\Omega(i,j)$ exists by \cite[Proposition 5.4.7]{FHL}.

The following fact has been shown in the proof of \cite[Theorem 4.1]{vEMS} 
(see also \cite[Theorem 3.12]{CKL}).
\begin{prop}
$\Omega(i,i)=1$, $i \in D$.
\end{prop}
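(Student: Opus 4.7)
The plan is to apply the defining relation \eqref{eq:def-Omega} twice and exploit the integrality of conformal weights.

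Specializing \eqref{eq:def-Omega} to $j=i$ gives
$$\CY_{i,i}^{2i}(u,z)v = \Omega(i,i)\, e^{zL(-1)} \CY_{i,i}^{2i}(v,-z)u.$$
Applying the same identity with the roles of $u$ and $v$ interchanged and $z$ formally replaced by $-z$, I expect
$$\CY_{i,i}^{2i}(v,-z)u = \Omega(i,i)\, e^{-zL(-1)} \CY_{i,i}^{2i}(u,z)v.$$
This substitution is legitimate because the integral-weight assumption in Hypothesis \ref{hypothesis:0}(3) forces every exponent of $-z$ appearing in $\CY_{i,i}^{2i}(v,-z)u$ to lie in $\Z$, so $(-(-z))^n = z^n$ is unambiguous. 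Plugging the second relation back into the first yields $\CY_{i,i}^{2i}(u,z)v = \Omega(i,i)^2 \CY_{i,i}^{2i}(u,z)v$, whence $\Omega(i,i)^2 = 1$ and $\Omega(i,i) \in \{\pm 1\}$.

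To rule out $\Omega(i,i) = -1$, I would extract the leading-order coefficient in $z$ of \eqref{eq:def-Omega} applied to top-level vectors $u, v \in U^i(0)$ of weight $h_i$. Since $e^{zL(-1)}$ acts as the identity on the leading term and $(-z)^{h_{2i}-2h_i} = (-1)^{h_{2i}-2h_i} z^{h_{2i}-2h_i}$ unambiguously by integrality, one obtains
$$u(0)v = \Omega(i,i)\,(-1)^{h_{2i}-2h_i}\, v(0)u \ \in\ U^{2i}(0).$$
The sign is then pinned down using the ribbon / modular-tensor-category structure on the category of $V$-modules, which is available since $V$ is rational and $C_2$-cofinite: the squared braiding of a simple current with itself satisfies $c_{U^i,U^i}^2 = \theta_{U^{2i}}\theta_{U^i}^{-2} = e^{2\pi i(h_{2i}-2h_i)}$, which equals $+1$ by integrality, and a boson-parity / self-duality argument for simple currents with integer conformal weight, carried out in the cited proofs, then gives $c_{U^i,U^i}= +1$, equivalent to $\Omega(i,i) = 1$.

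The main obstacle is precisely this final positivity step: the formal double application of skew-symmetry produces only $\Omega(i,i)^2 = 1$, and pinning down the sign requires external input from the braided tensor category structure, as in \cite[Thm.~4.1]{vEMS} and \cite[Thm.~3.12]{CKL}. The base case $i = 0$ is immediate since $\CY_{0,0}^0 = Y$ and the VOA skew-symmetry $Y(a,z)b = e^{zL(-1)}Y(b,-z)a$ gives $\Omega(0,0) = 1$ directly.
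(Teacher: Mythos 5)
Your argument is essentially correct and overlaps with the paper's at the decisive point, but it adds an elementary first stage that the paper omits and is vaguer at the stage where the paper is precise. Your double application of skew-symmetry, legitimate here exactly because the integral-weight hypothesis makes all exponents of $-z$ integral, cleanly yields $\Omega(i,i)^2=1$; the paper does not bother with this step and goes straight for the exact value. For the sign, both you and the paper ultimately invoke \cite[Theorem 4.1]{vEMS} (and \cite{CKL}), but the mechanisms described differ: your balancing identity $c_{U^i,U^i}^2=\theta_{U^{2i}}\theta_{U^i}^{-2}$ only reproves $\Omega(i,i)^2=1$, and the phrase ``boson-parity / self-duality argument'' does not name the actual input that excludes $\Omega(i,i)=-1$. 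That input, as the paper spells out, is the \emph{positivity of the dimension}: by \cite[Eq.~(4.1)]{DJX} the categorical dimension of $U^i$ coincides with the quantum dimension, which is $1$ for a simple current by \cite[Lemma 4.15]{DJX}, and it is this value $+1$ (rather than $-1$) that, fed into the equation $e(q_\Omega(\alpha))=e(-q_\Delta(\alpha))$ of \cite{vEMS} with $q_\Delta(\alpha)=0$, forces $\Omega(i,i)=1$. A fermionic simple current of half-integral weight shows that integrality alone is doing real work, and a hypothetical categorical dimension $-1$ shows that the dimension computation is equally indispensable; your write-up correctly isolates the sign as the crux but should replace the appeal to ``boson parity'' by the quantum-dimension argument to be complete. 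Your observation that $\Omega(0,0)=1$ follows from ordinary VOA skew-symmetry is correct but is only the trivial case.
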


Indeed, the categorical dimension of $U^i$ coincides with the quantum dimension 
in the sense of \cite[Definition 3.1]{DJX} by \cite[Eq. (4.1)]{DJX} 
under Hypothesis \ref{hypothesis:0}. 
Since $U^i$ is a simple current, the quantum dimension of $U^i$ is $1$ 
\cite[Lemma 4.15]{DJX}. 
Then the condition that $U^i$ has integral weight implies $\Omega(i,i)=1$ 
by the equation $e(q_\Omega(\alpha)) = e(-q_\Delta(\alpha))$ in the proof of 
\cite[Theorem 4.1]{vEMS}, for $q_\Delta(\alpha) = 0$ with $\alpha = i$.



The condition that $\Omega(i,i)=1$, $i \in D$ is called evenness in \cite{Carnahan}. 
The evenness implies the next theorem 
\cite[Theorem 3.2.12]{Carnahan}, \cite[Theorem 3.12]{CKL}, \cite[Theorem 4.2]{vEMS}.

\begin{thm}\label{thm:SCE}
There exists a choice of nonzero 
intertwining operators  $\CY_{i,j}^{i+j} \in \I_{i,j}^{i+j}$, $i,j \in D$ 
which gives a vertex operator algebra structure on $\bigoplus_{i \in D} U^i$ 
as an extension of $V$. 
\end{thm}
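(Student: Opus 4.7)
The plan is to recast the problem in the language of commutative associative algebra objects in the modular tensor category $\on{Rep}(V)$, which is available under Hypothesis \ref{hypothesis:0} by the results of Huang. Constructing a VOA extension of $V$ on $A := \bop_{i \in D} U^i$ whose restricted vertex operators are nonzero multiples of the fixed $\CY_{i,j}^{i+j}$ is equivalent, via the Huang--Kong correspondence, to equipping $A$ with the structure of a commutative, associative, haploid algebra object in $\on{Rep}(V)$. Since each fusion space $\I_{i,j}^{i+j}$ is one dimensional, the algebra multiplication $\mu_{i,j}$ is determined by a single scalar $\lambda_{i,j} \in \C^\times$, and the whole question reduces to solving a finite system of scalar identities.

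First I would normalize the $\CY_{i,j}^{i+j}$ so that $\CY_{0,j}^{j} = Y_{U^j}$ gives the module structure and $\CY_{i,0}^{i}(u,z)\1 = e^{zL(-1)} u$ is the creation intertwiner; these normalizations force $\lambda_{0,j} = \lambda_{j,0} = 1$ and dispose of the unit axioms. The remaining constraints are commutativity of the multiplication, expressed through the braiding, and associativity, expressed through the associator $F$-matrices. Because the $U^i$ are simple currents, the full subcategory they generate is a pointed braided fusion subcategory of $\on{Rep}(V)$, and its braiding/associator are classified by an Eilenberg--MacLane abelian $3$-cocycle on the finite abelian group $D$. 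The scalar $\Omega(i,i)$ defined in \eqref{eq:def-Omega} is precisely the value on $i$ of the quadratic form that records the diagonal of this cocycle, so the evenness $\Omega(i,i) = 1$ established just before the theorem is the vanishing of that quadratic form on all of $D$.

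Since $H^3_{\mathrm{ab}}(D, \C^\times)$ is detected by the quadratic form, its vanishing makes the obstruction cohomologically trivial, so one may choose a $2$-cochain $\lambda : D \times D \to \C^\times$ trivializing it: fix generators of $D$, assign $\lambda$-values, and propagate recursively, using the cocycle condition to check consistency at each step. The resulting operators $\lambda_{i,j} \CY_{i,j}^{i+j}$ then satisfy the Jacobi identity on $A$ and exhibit the desired extension. The substantive step—and the main obstacle—is translating the analytic VOA conditions (associativity of four-point correlation functions with their $z$-dependence) into the purely algebraic abelian-cocycle picture, and then showing that evenness alone suffices to kill the obstruction; this is exactly the content distilled from \cite{Carnahan,CKL,vEMS}, which is why the theorem is credited to those sources.
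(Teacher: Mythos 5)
Your outline is correct and follows essentially the same route as the paper, which offers no proof of its own for Theorem \ref{thm:SCE} but quotes it from \cite{Carnahan,CKL,vEMS}: those references argue exactly as you do, passing to the pointed braided subcategory of $\on{Rep}(V)$ generated by the simple currents, identifying the obstruction with an Eilenberg--MacLane abelian $3$-cocycle whose class is detected by the quadratic form $i\mapsto\Omega(i,i)$, and using evenness (forced under Hypothesis \ref{hypothesis:0} by integrality of the weights) to trivialize it. Your explicit acknowledgement that the analytic-to-categorical translation is the substance of the cited works puts your sketch at precisely the level of detail the paper itself adopts.
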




Such a vertex operator algebra structure on $\bigoplus_{i \in D} U^i$ is unique 
up to isomorphism \cite[Proposition 5.3]{DM2}. 
The vertex operator algebra $\bigoplus_{i \in D} U^i$ 
is called a simple current extension of $V$. 
It is a simple, self-dual, rational and $C_2$-cofinite vertex operator algebra of CFT-type 
\cite[Theorem 2.14]{Yamauchi}.


\subsection{Lattice vertex operator algebra $V_{\Z\gm}$}\label{subsec:latticeVOA}
We recall some basic properties of a vertex operator algebra
associated with a positive definite even rank one lattice. 
Let $\Z\gm$ be a positive definite even rank one lattice generated by $\gm$, where 
the square norm of $\gm$ is $\la \gm, \gm \ra = 2k$. 
Let $V_{\Z\gm} = M(1) \ot \C[\Z\gm]$ be a vertex operator algebra
associated with the lattice $\Z\gm$ \cite{FLM}.
Thus $M(1)$ is 
a simple highest weight module for the
Heisenberg algebra generated by $\gm$ with highest weight $0$. It is
isomorphic to a polynomial algebra $\C[ \gm(-n)\,|\, n \in \Z_{> 0}]$ as
a vector space. Since $\Z\gm$ is a rank one lattice, the twisted
group algebra $\C\{\Z\gm\}$ considered in \cite{FLM} is isomorphic to
an ordinary group algebra $\C[\Z\gm]$. Its standard basis is
$\{ e^{n\gm}\,|\, n \in \Z\}$ with multiplication
$e^\al e^\beta = e^{\al + \beta}$. The conformal vector is
\begin{equation}\label{eq:Vir-lattice}
\om_\gm = \frac{1}{4k} \gm(-1)^2\1
\end{equation}
and its central charge is $1$.

The vertex operator algebra $V_{\Z\gm}$ is
simple, self-dual, rational,  $C_2$-cofinite and of CFT-type.
The simple modules for  $V_{\Z\gm}$ were classified \cite{Dong}.
Since the dual lattice of $\Z\gm$ is $(1/2k)\Z\gm$, any simple
$V_{\Z\gm}$-module is isomorphic to one of $V_{\Z\gm + i\gm/2k}$,
$0 \le i \le 2k-1$. 
The top level of $V_{\Z\gm + i\gm/2k}$ is $\C e^{i\gm/2k}$ with weight
$i^2/4k$ if $0 \le i < k$ and $\C e^{(2k-i)\gm/2k}$ with weight
$(2k-i)^2/4k$ if $k < i \le 2k-1$. 
In the case $i = k$, the top level is $\C e^{\gm/2} + \C e^{-\gm/2}$ with
weight $k/4$.

The fusion rules among these simple
modules are also known and intertwining operators were
constructed by using vertex operators \cite[Chapter 12]{DL}.
In fact, the fusion rules are
\begin{equation}\label{eq:VZgm-fusion}
V_{\Z\gm + i\gm/2k} \times V_{\Z\gm + j\gm/2k} = V_{\Z\gm + (i+j)\gm/2k}.
\end{equation}
In particular,
all the simple modules are simple currents.

Since the commutator map $(\,\cdot\,,\,\cdot\,)$ of \cite[(12.5)]{DL} 
is trivial for the rank one lattice $(1/k)\Z\gm$
and since $\la\gm,i\gm/k\ra = 2i \in 2\Z$ implies
$(-1)^{\la \overline{a},\overline{b}\ra} = 1$ in \cite[(12.5)]{DL}, 
the vertex operator $Y(\,\cdot\,,z)$ on
$V_{(1/k)\Z\gm} = M(1) \otimes \C[(1/k)\Z\gm]$ defined in 
\cite[Section 8.4]{FLM} (see also \cite[Chapter 3]{DL}) itself can be
taken as the intertwining operator $\CY_{\lambda_j}(\,\cdot\,,z)$
of \cite[(12.3)]{DL} for $V_{\Z\gm + j\gm/k}$, $0 \le j \le k-1$.
That is, $(-1)^{\la \overline{a},\overline{b}\ra} c(\overline{a},\overline{b}) = 1$ 
in \cite[(12.5)]{DL} and so $Y(v,z)w$ with $v \in V_{\Z\gm + i\gm/k}$
and $w \in V_{\Z\gm + j\gm/k}$ satisfies the Jacobi identity for
intertwining operators \cite[(12.8)]{DL}.

Let $\CY_{L,i,j}^{i+j}(\,\cdot\,,z)$ be the vertex operator $Y(\,\cdot\,,z)$ 
for $V_{(1/k)\Z\gm}$ restricted to $V_{\Z\gm - i \gm/k}$ and 
acting on $V_{\Z\gm - j \gm/k}$, so that $\CY_{L,i,j}^{i+j}(u,z)v = Y(u,z)v$ 
for $u \in V_{\Z\gm - i \gm/k}$ and $v \in V_{\Z\gm - j \gm/k}$. 
It is an intertwining operator of type 
\begin{equation*}
\begin{pmatrix}
V_{\Z\gm - (i+j) \gm/k}\\
V_{\Z\gm - i \gm/k} \quad V_{\Z\gm - j \gm/k}
\end{pmatrix}
\end{equation*}
for the vertex operator algebra $V_{\Z\gm}$. 
The action of $Y(e^{\pm \gm/k},z)$ on the top level of
$V_{\Z\gm \pm \gm/k}$ will be used later. 
By the definition
\begin{equation*}
Y(e^{\pm \gm/k},z) = E^-(\mp \gm/k,z)E^+(\mp \gm/k,z) e^{\pm \gm/k} z^{\pm \gm/k},
\end{equation*}
where $E^\pm(\al,z) = \exp(\sum_{n \in \pm\Z_{> 0}} \frac{\al(n)}{n} z^{-n})$.
Since $e^{\gm/k} z^{\gm/k} e^{-\gm/k} = \1 z^{-2/k}$, we have
\begin{equation}\label{eq:intertwiner-lattice1}
Y(e^{\gm/k},z) e^{-\gm/k} = \1  z^{-2/k} + \frac{1}{k} \gm(-1) \1  z^{1-2/k} + \cdots.
\end{equation}

\subsection{Parafermion vertex operator algebra $K(\mathfrak{sl}_2,k)$}\label{subsec:parafermion}
We recall from \cite{ALY, DLWY, DLY} the properties of
parafermion vertex operator algebra associated with $\mathfrak{sl}_2$. 
Let $k \ge 3$ be an integer.
Let $\{ h,e,f \}$ be a standard Chevalley basis of the Lie algebra $\mathfrak{sl}_2$,
so that $[h,e] = 2e$, $[h,f] = -2f$, $[e,f] = h$ for the bracket
and $(h|h) = 2$, $(e|f) = 1$,
$(h|e) = (h|f) = (e|e) = (f|f) = 0$ for
the normalized invariant inner product.

Let $V(k,0) = V_{\hsl}(k,0)$ be 
a Weyl module for 
the affine Lie algebra $\hsl = \mathfrak{sl}_2 \ot \C[t,t^{-1}] \op \C C$ at level $k$. 
Denote by $\1$ its canonical highest weight vector,
which is called the vacuum vector. Then $\mathfrak{sl}_2 \ot \C[t]$
acts as $0$  and $C$ acts as $k$ on $\1$, and $V(k,0)$ is the
induced module of the $\mathfrak{sl}_2 \ot \C[t] \op \C C$-module
$\C\1$. We write $a(n)$ for the action of $a \ot t^n$ on $V(k,0)$.
The Weyl module $V(k,0)$ is a vertex operator algebra
with the conformal vector
\begin{equation*}
\om_{\mraff} = \frac{1}{2(k+2)}
\Big( \frac{1}{2}h(-1)^2\1 + e(-1)f(-1)\1 + f(-1)e(-1)\1 \Big),
\end{equation*}
whose central charge is $3k/(k+2)$ \cite{FZ}, \cite[Section 6.2]{LL}.

Let $M_{\wh}(k,0)$ be the vertex operator subalgebra of $V(k,0)$
generated by $h(-1)\1$. That is, $M_{\wh}(k,0)$ is a Heisenberg
vertex operator algebra.
The conformal vector of $M_{\wh}(k,0)$ is
\begin{equation*}
\om_h = \frac{1}{4k} h(-1)^2\1
\end{equation*}
and its central charge is $1$.
As a module for $M_{\wh}(k,0)$, we have a decomposition
\begin{equation*}
V(k,0) = \bigoplus_{\lambda \in 2\Z}
M_{\wh}(k,\lambda) \ot N_\lambda,
\end{equation*}
where $M_{\wh}(k,\lambda)$ is a simple highest weight
module for $M_{\wh}(k,0)$ with a highest weight vector
$v_\lambda$ such that $h(0) v_\lambda = \lambda v_\lambda$ and
\begin{equation}\label{eq:Nlambda}
N_\lambda = \{ v \in V(k,0)\,|\, h(m)v = \lambda \dl_{m,0} v
\text{ for } m \ge 0 \}.
\end{equation}

In particular, $N_0$ is the commutant \cite[Theorem 5.1]{FZ}
of $M_{\wh}(k,0)$ in $V(k,0)$, which is a vertex operator
algebra with the conformal vector $\om_{\mrpara} = \om_{\mraff} - \om_h$.
The central charge of $N_0$ is $2(k-1)/(k+2)$.
The character of $N_0$ is $\ch N_0 = 1 + q^2 + 2q^3 + \cdots$.
It is known \cite[Section 2]{DLY} that
\begin{equation}\label{eq:W3}
\begin{split}
W^3 &= k^2 h(-3)\1 + 3 k h(-2)h(-1)\1 +
2h(-1)^3\1 - 6k h(-1)e(-1)f(-1)\1 \\
& \quad + 3 k^2e(-2)f(-1)\1 - 3 k^2e(-1)f(-2)\1.
\end{split}
\end{equation}
is a unique, up to a scalar multiple, Virasoro singular
vector in the weight $3$ subspace $(N_0)_{(3)}$.
The vertex operator algebra $N_0$ is generated by the
conformal vector $\om_{\mrpara}$ and the weight $3$
vector $W^3$ \cite[Theorem 3.1]{DLWY}.

The vertex operator algebra $V(k,0)$ has a unique maximal
ideal $\J$, which is generated by a single element
$e(-1)^{k+1}\1$ \cite{K}.
Let $L(k,0) = L_{\hsl}(k,0) = V(k,0)/\J$.
Since $M_{\wh}(k,0) \cap \J = 0$, $M_{\wh}(k,0)$ can be
considered as a subalgebra of $L(k,0)$ and we have
a decomposition
\begin{equation*}
L(k,0) = \bigoplus_{\lambda \in 2\Z}
M_{\wh}(k,\lambda) \ot K_\lambda
\end{equation*}
of $M_{\wh}(k,0)$-modules, where
\begin{equation*}
K_\lambda = \{ v \in L(k,0)\,|\, h(m)v = \lambda \dl_{m,0} v
\text{ for } m \ge 0 \}.
\end{equation*}

Note that $K_0$ is the commutant of $M_{\wh}(k,0)$ in $L(k,0)$. 
We use the same symbols $a(-1)\1$ for $a \in \{ h,e,f \}$,
$\om_{\mraff}$, $\om_h$, $\om_{\mrpara}$ and $W^3$ to denote their images
in $L(k,0)$.

We call $K_0$ a parafermion vertex operator algebra
associated with $\mathfrak{sl}_2$ and denote it by $K(\mathfrak{sl}_2,k)$. 
It is a simple vertex operator algebra of
central charge $2(k-1)/(k+2)$ and generated by $\om_{\mrpara}$ and $W^3$.
The character is $\ch K_0 = 1 + q^2 + 2q^3 + \cdots$.

The vertex operator algebra $K_0$ can be embedded in a
vertex operator algebra $V_L$ associated with a rank $k$
lattice $L = \Z\al_1 + \cdots + \Z\al_k$ with
$\la \al_p,\al_q \ra = 2\dl_{p,q}$
\cite[Section 4]{DLY}, \cite[Chapter 14]{DL}.
In fact, let $\gm = \al_1 + \cdots + \al_k$ and set
\begin{equation*}
H = \gm(-1)\1, \qquad E = e^{\al_1} + \cdots + e^{\al_k},
\qquad F = e^{-\al_1} + \cdots + e^{-\al_k}.
\end{equation*}

Then $\la \gm,\gm \ra = 2k$ and 
the component operators $H_n$, $E_n$, $F_n$, $n \in \Z$
give a level $k$ representation of $\hsl$ under the
correspondence
$h(n) \leftrightarrow H_n$,
$e(n) \leftrightarrow E_n$,
$f(n) \leftrightarrow F_n$.
In particular, the vertex operator subalgebra $\Vaff$
of $V_L$ generated by $H$, $E$ and $F$ is isomorphic to
$L(k,0)$. We also consider the vertex operator
subalgebra $V^\gm$ of $V_L$ generated by $e^\gm$ and
$e^{-\gm}$. Note that $V^\gm \cong V_{\Z\gm}$.
We identify $\Vaff$ with $L(k,0)$ and $V^\gm$ with
$V_{\Z\gm}$. We also identify $H_n$ with $h(n)$,
$E_n$ with $e(n)$ and $F_n$ with $f(n)$.
Then we have \cite[Lemma 4.2]{DLY}
\begin{equation}\label{eq:Lk0-dec}
L(k,0) = \bigoplus_{j=0}^{k-1}
V_{\Z\gm - j\gm/k} \ot M^j
\end{equation}
as $V_{\Z\gm} \otimes M^0$-modules, where
\begin{equation}\label{def:Mj}
M^j = \{ v \in L(k,0) | \gm(m)v = -2j \dl_{m,0} v
\text{ for } m \ge 0 \}.
\end{equation}
That is, $M^j = K_{-2j}$ for $0 \le j \le k-1$.
In particular,
$$M^0=K_0=K(\mathfrak{sl}_2,k).$$

\begin{rmk}
$M^j$ is denoted by $M^{0,j}$ in \cite[Lemma 4.2]{DLY}. 
The index $j$ of $M^j$ is considered to be modulo $k$.
\end{rmk}

Those $M^j$'s are simple $M^0$-modules \cite[Theorem 4.4]{DLY}. 
Hence \eqref{eq:Lk0-dec}
is a decomposition of $L(k,0)$ as a direct sum of
simple $V_{\Z\gm} \ot M^0$-modules
$V_{\Z\gm - j\gm/k} \ot M^j$, $0 \le j \le k-1$.

Let $L^\perp = \frac{1}{2}L$ be the dual lattice of $L$. 
The simple module $L(k,i) = L_{\hsl}(k,i)$ for the simple affine 
vertex operator algebra $L(k,0)$ with $i+1$ 
dimensional top level of weight $\frac{i(i+2)}{4(k+2)}$ can be constructed in 
the simple $V_L$-module $V_{L^\perp}$, $1 \le i \le k$. 
Let
\begin{equation}\label{def:Mij}
M^{i,j} = \{ v \in L(k,i) | \gm(m) v = (i-2j)\delta_{m,0}v \text{
for } m\ge 0\}
\end{equation}
for $0 \le i \le k$, $0 \le j \le k - 1$. 
Then $M^{i,j}$'s are simple $M^0$-modules and
\begin{equation}\label{eq:Lki-dec}
L(k,i) = \bigoplus_{j=0}^{k-1} V_{\Z\gm + (i-2j)\gm/2k} \otimes M^{i,j}
\end{equation}
as $V_{\Z\gm} \otimes M^0$-modules \cite[Lemma 4.3]{DLY}.

\begin{rmk}\label{rmk:j-mod-k}
The decomposition \eqref{def:Mij} implies that $M^{i,j}$ is the multiplicity of 
$V_{\Z\gm + (i-2j)\gm/2k}$ in $L(k,i)$. 
Since for a fixed $i$, $0 \le i \le k$, the simple $V_{\Z\gm}$-module 
$V_{\Z\gm + (i-2j)\gm/2k}$ is determined by $j \pmod{k}$, 
we consider the second index $j$ of  $M^{i,j}$ to be modulo $k$.
\end{rmk}

The $-1$ isometry of the lattice $L$ lifts to an automorphism $\theta$ of 
the vertex operator algebra $V_L$ of order $2$. 
Actually, $\theta(H) = -H$, $\theta(E) = F$ and $\theta(F) = E$. 
Both $\Vaff = L(k,0)$ and $M^0 = K(\mathfrak{sl}_2,k)$ are invariant under $\theta$. 
In fact, the automorphism group $\Aut M^0$ of $M^0$ is generated by $\theta$ 
and we have $\theta(W^3) = -W^3$.

\medskip
We summarize the properties of $M^0$ and
$M^{i,j}$ for $0 \le i \le k$, $0 \le j \le k-1$ 
(see \cite[Theorem 4.4, Proposition 4.5]{ALY}, 
\cite[Theorem 4.1]{DLWY}, \cite[Theorem 8.2]{DLY}).

(1) $M^0$ is a simple vertex operator algebra of central charge
$2(k-1)/(k+2)$.

(2) $\ch M^0 = 1 + q^2 + 2q^3 + \cdots$.

(3) $M^0$ is generated by $M^0_{(2)}$ and $M^0_{(3)}$.

(4) The simple $M^0$-modules $M^{i,j}$'s are not always inequivalent. In fact, 
\begin{equation}\label{eq:equiv-Mij}
M^{i,j} \cong M^{k-i,j-i}
\end{equation}
for $0 \le i \le k$, $0 \le j \le k-1$. 

(5) $M^{i,j}$, $0 \le j < i \le k$ form a complete set of representatives of 
the isomorphism classes of simple $M^0$-modules. 
There are exactly $k(k+1)/2$ inequivalent simple $M^0$-modules. 

(6) The top level of $M^{i,j}$ is one dimensional and its weight is 
\begin{equation}\label{eq:top-weight-Mij}
\frac{1}{2k(k+2)}\Big( k(i-2j) - (i-2j)^2 + 2k(i-j+1)j \Big)
\end{equation}
for $0 \le j < i \le k$.

(7) The automorphism $\theta$ of $M^0$ induces a permutation 
\begin{equation}\label{eq:theta-action}
M^{i,j} \mapsto M^{i,j} \circ \theta = M^{i,i-j}
\end{equation}
on the simple $M^0$-modules for $0 \le i \le k$, $0 \le j \le k-1$.

\begin{rmk}
It follows from \eqref{eq:equiv-Mij} that 
$M^{k,j}$ is isomorphic to $M^{0,j-k} = M^{0,j}$ for $0 \le j \le k-1$, 
even though the simple $L(k,0)$-module $L(k,k)$ is not isomorphic to $L(k,0)$ 
in the decomposition \eqref{eq:Lki-dec}. 
We also note that $M^j$ is equal to $M^{0,j} = M^{k,j}$ and 
its top weight is $j(k-j)/k$ for $0 \le j \le k-1$ by \eqref{eq:top-weight-Mij}. 
The top weight \eqref{eq:top-weight-Mij} of $M^{i,j}$ is non-negative, and is zero 
only if $i=k$ and $j=0$.
\end{rmk}

\section{A characterization of $L_{\hsl}(k,0)$}\label{sec:characterization-Lk0}
Let $k \ge 3$ be an integer. 
In this section we argue that a simple vertex operator algebra satisfying the following hypothesis is
isomorphic to the affine vertex operator algebra $L_{\hsl}(k,0)$.

\begin{hyp}\label{hypothesis:1}
$(1)$ $(V,Y,\1,\om)$ is a simple vertex
operator algebra of CFT-type with central charge $3k/(k+2)$.

$(2)$ $V$ contains a vertex operator subalgebra $(T^0,Y,\1,\om^1)$ isomorphic to
$V_{\Z\gm}$, where $\la \gm,\gm\ra = 2k$.
We identify $T^0$ with $V_{\Z\gm}$.
Then $\om^1 = \frac{1}{4k}\gm(-1)^2\1$ is the conformal vector of $T^0$
with central charge $1$. We assume that $\om^1 \in V_{(2)}$ and $\om_2 \om^1 = 0$.

$(3)$ Let $N^0$ be the commutant of $T^0$ in $V$ and
set $\om^2 = \om - \om^1$. Thus $(N^0,Y,\1,\om^2)$ is a vertex operator
subalgebra of $V$ with central charge $2(k-1)/(k+2)$.
We assume that $\ch N^0 = 1 + q^2 + 2q^3 + \cdots$ and that
$N^0$ is generated by
$N^0_{(2)}$ and $N^0_{(3)}$ as a vertex operator algebra.

$(4)$ We assume that $V$ is isomorphic to
$\bigoplus_{j=0}^{k-1} V_{\Z\gm - j\gm/k} \otimes N^j$ as a $T^0$-module, where
\begin{equation*}
N^j = \{ v \in V\,|\,\gm(m)v = -2j\delta_{m,0}v \text{ for } m \ge 0\}.
\end{equation*}
We also assume that as a module for $N^0$, the top weight of $N^j$ is $j(k-j)/k$.
\end{hyp}

Under Hypothesis \ref{hypothesis:1} we shall show that $V$ is isomorphic to 
$L(k,0) = L_{\hsl}(k,0)$.
The proof is divided into several steps.
First, we shall introduce some notation.
From the hypothesis we may assume
that $T^0 \ot N^0$ is a vertex operator subalgebra of $V$
\cite[Theorem 5.1]{FZ}.
Then the vacuum
vector and the conformal vector of $V$ are given as $\1 = \1^1 \ot \1^2$
and $\om = \om^1 \ot \1^2 + \1^1 \ot \om^2$, where $\1^1$ and $\1^2$ are
the vacuum vectors of $T^0$ and $N^0$, respectively.
For simplicity, we usually do not distinguish between $T^0 \ot \1^2$ and $T^0$
(resp. $\1^1 \ot N^0$ and $N^0$) and so $\om^1 \ot \1^2$ and $\om^1$
(resp. $\1^1 \ot \om^2$ and $\om^2$).
The weight of $v \in V$ as a module for $T^0$ (resp. $N^0$) or
$\om^1$-weight (resp. $\om^2$-weight) 
means the eigenvalue for the operator $L^1(0)$ (resp $L^2(0)$),
where $L^i(n) = \om^i_{n+1}$.

By our hypothesis, $V$ decomposes into a direct sum of simple $T^0$-modules 
and each simple direct summand is isomorphic to
one of $V_{\Z\gm - j\gm/k}$, $0 \le j \le k-1$. 
Moreover, 
$N^j$ is the sum of top levels of all simple
$T^0$-submodules of $V$ isomorphic to $V_{\Z\gm - j\gm/k}$.
We examine the action of $\gm(0) = (\gm(-1)\1)_0$
on the top level of each direct summand.

Let $\sigma = \exp(2\pi\sqrt{-1}\gm(0)/2k)$, which is an automorphism of
the vertex operator algebra $V$ of order $k$.
We consider its eigenspace
\begin{equation}\label{def:Vj}
V^j = \{ v \in V\,|\, \sigma v = \exp(-2\pi j\sqrt{-1}/k) v\}
\end{equation}
with eigenvalue $\exp(-2\pi j\sqrt{-1}/k)$.
Then $V = \bigoplus_{j=0}^{k-1} V^j$.
By \cite[Theorem 3]{DM}, $V^0$ is a simple vertex operator algebra and
$V^j$, $1 \le j \le k-1$ are simple $V^0$-modules.
For convenience, we understand the index $j$ of $V^j$ to be modulo $k$.
Since $\sigma$ is an automorphism of $V$, we have
\begin{equation}\label{eq:unv}
u_n v \in V^{i+j} \quad \text{for} \quad u \in V^i, v \in V^j, n \in \Z.
\end{equation}

In fact, $V^j$ is the sum of all simple
$T^0$-submodules of $V$ isomorphic to $V_{\Z\gm - j\gm/k}$,
for the operator $\gm(0)$ acts on
$e^{n\gm - j\gm/k} \in V_{\Z\gm - j\gm/k}$ as
a scalar $\la \gm, n\gm - j\gm/k \ra = 2kn - 2j$ and
commutes with $\gm(m)$, $m \in \Z$.
Hence $V^j \cong T^j \otimes N^j$, where $T^j$ is a simple
$T^0$-module isomorphic to $V_{\Z\gm - j\gm/k}$.
In particular, $V^0 \cong T^0 \otimes N^0$ as vertex operator algebras.
Since $V^0$ is simple and $V^j$, $1 \le j \le k-1$ are simple $V^0$-modules,
the following lemma holds.

\begin{lem}\label{lem:V0}
$V^0 \cong T^0 \otimes N^0$ as vertex operator algebras and
$N ^0$ is a simple vertex operator algebra.
Moreover, $N^j$, $1 \le j \le k-1$ are simple $N^0$-modules.
\end{lem}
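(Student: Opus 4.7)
The plan is to upgrade the $T^0$-module isomorphism $V^0 \cong T^0 \ot N^0$ from Hypothesis \ref{hypothesis:1}(4) to an isomorphism of vertex operator algebras, and then read off the simplicity of $N^0$ and of each $N^j$ from the Dong--Mason theorem already invoked in the text.

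First I would observe that both $T^0$ and $N^0$ lie in $V^0$: the first because $\gm(0)$ acts by $0$ on $V_{\Z\gm} = T^0$, and the second because any element of $N^0 = \on{Com}_V(T^0)$ is annihilated by $\gm(0) = (\gm(-1)\1)_0$, so $\sigma$ acts trivially on it. Since $N^0$ is by construction the commutant of $T^0$ in $V$, the two subalgebras mutually commute, and the standard construction for commuting vertex subalgebras produces a vertex operator algebra homomorphism
\begin{equation*}
\phi : T^0 \ot N^0 \to V^0, \qquad a \ot b \mapsto a_{-1} b.
\end{equation*}
Surjectivity follows from Hypothesis \ref{hypothesis:1}(4), since $V^0$ is $T^0$-isotypic with multiplicity space $N^0$, and the image of $\phi$ contains both $T^0$ and $N^0$, hence contains the full $T^0$-submodule generated by $N^0$, which is all of $V^0$. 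For injectivity, $\ker \phi$ is an ideal of $T^0 \ot N^0$ and in particular a $T^0$-submodule via the left factor; since $T^0$ is simple, every such submodule has the form $T^0 \ot S$ for some subspace $S \subseteq N^0$. But $\phi(\1 \ot v) = v$ for $v \in N^0$ forces $S = 0$, so $\phi$ is an isomorphism.

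Having identified $V^0$ with $T^0 \ot N^0$ as vertex operator algebras, the simplicity of $N^0$ follows from the simplicity of $V^0$ provided by \cite{DM}, since a nonzero proper ideal $I \subsetneq N^0$ would yield the nonzero proper ideal $T^0 \ot I$ of $V^0$. The analogous device handles each $N^j$: by \cite{DM} each $V^j$ is a simple $V^0$-module for $1 \le j \le k-1$, and the $T^0$-module decomposition $V^j \cong T^j \ot N^j$ from Hypothesis \ref{hypothesis:1}(4) upgrades to an isomorphism of $V^0 \cong T^0 \ot N^0$-modules, because the commuting action of $N^0$ on $V^j$ respects the $T^0$-isotypic structure and therefore restricts to an action on the multiplicity space $N^j$; any proper nonzero $N^0$-submodule of $N^j$ would then pull back to a proper nonzero $V^0$-submodule of $V^j$, contradicting its simplicity.

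The main obstacle I anticipate is the clean verification that $\phi$ is well-defined as a VOA homomorphism, which relies on the general fact that mutually commuting subalgebras of a VOA assemble into a tensor-product homomorphism; once this is granted, the isotypic-decomposition arguments powered by the simplicity of $T^0 = V_{\Z\gm}$ dispatch the remaining bookkeeping.
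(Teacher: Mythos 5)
Your proof is correct and follows essentially the same route as the paper: both identify the $\sigma$-eigenspace $V^j$ with the $T^0$-isotypic component $T^j\otimes N^j$, invoke the Dong--Mason quantum Galois theorem for the simplicity of $V^0$ and of the $V^j$ as $V^0$-modules, and then transfer simplicity to $N^0$ and the $N^j$ through the tensor-product structure. You merely spell out in more detail the step (the map $a\otimes b\mapsto a_{-1}b$ and the isotypic-submodule argument) that the paper compresses into a citation of \cite[Theorem 5.1]{FZ} and the phrase ``Hence $V^j\cong T^j\otimes N^j$.''
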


The weight $1$ subspace of $V^0$ is $V^0_{(1)} = \C\gm(-1)\1$ and
we have $\om_2 V^0_{(1)} = 0$. 
Hence, $V^0$ possesses a nonzero
invariant bilinear form by \cite[Theorem 3.1]{Li1} and so the following lemma holds.

\begin{lem}\label{lem:self-dual}
The vertex operator algebra $V^0$ is self-dual.
\end{lem}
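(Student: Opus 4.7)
The plan is to invoke Li's criterion \cite[Theorem 3.1]{Li1} for the existence of an invariant bilinear form on a simple vertex operator algebra of CFT-type, and then upgrade that bilinear form to an isomorphism with the contragredient module. Recall that Li's criterion asserts: a simple vertex operator algebra $V$ of CFT-type admits a nonzero invariant bilinear form if and only if $L(1)V_{(1)} = 0$. Any nonzero invariant bilinear form on a simple vertex operator algebra is automatically non-degenerate, and such a form is equivalent to an isomorphism $V \isomap V'$ of $V$-modules.

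The first step is to verify the hypotheses of Li's theorem for $V^0$. By Lemma \ref{lem:V0}, $V^0 \cong T^0 \otimes N^0$ is a simple vertex operator algebra. It is of CFT-type because $V$ is of CFT-type by Hypothesis \ref{hypothesis:1}(1), and $V^0$ is the fixed-point subalgebra of the automorphism $\sigma$, a graded subalgebra containing the vacuum vector with $V^0_{(0)} = \C\1$ and no negative weights.

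The second step is to check $L(1) V^0_{(1)} = 0$. The weight $1$ subspace of $V^0 \cong T^0 \otimes N^0$ is one-dimensional: the lattice factor $T^0 = V_{\Z\gm}$ contributes the line $\C\gm(-1)\1$ in weight $1$, while $N^0$ has no weight $1$ vectors by the character formula $\ch N^0 = 1 + q^2 + 2q^3 + \cdots$ in Hypothesis \ref{hypothesis:1}(3). This gives $V^0_{(1)} = \C\gm(-1)\1$, as already noted just above the lemma. The relation $\om_2 V^0_{(1)} = 0$, also stated in the preceding paragraph, is exactly $L(1)V^0_{(1)} = 0$ since $\om_n = L(n-1)$; it holds because $\gm(-1)\1$ is a primary vector of weight one in the lattice VOA $T^0$, so $L^1(1)\gm(-1)\1 = 0$, while $L^2(1)$ vanishes on the $T^0$-factor of $V^0$.

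Applying \cite[Theorem 3.1]{Li1} then produces a non-degenerate invariant bilinear form on $V^0$, which yields the isomorphism $V^0 \isomap (V^0)'$ of $V^0$-modules required for self-duality. I do not anticipate a genuine obstacle: the lemma is essentially a bookkeeping consequence of Li's criterion once the two facts $V^0_{(1)} = \C\gm(-1)\1$ and $\om_2 V^0_{(1)} = 0$ are in hand, which is why the two sentences preceding the statement already contain the full logical content of the argument.
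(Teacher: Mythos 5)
Your proof is correct and follows essentially the same route as the paper: both identify $V^0_{(1)} = \C\gm(-1)\1$, observe $\om_2 V^0_{(1)} = 0$, and invoke \cite[Theorem 3.1]{Li1} to produce a nonzero (hence, by simplicity, non-degenerate) invariant bilinear form giving $V^0 \cong (V^0)'$. The only difference is that you spell out the justification of $\om_2 V^0_{(1)} = 0$ via the primary-vector computation in the lattice factor, which the paper leaves implicit.
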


The top level $N^j(0)$ of $N^j$ is of weight $j(k-j)/k$ by
our hypothesis. Hence the weight of the top level
$V^j(0) = T^j(0) \otimes N^j(0)$ of $V^j$ is $j$ if $0 \le j < k/2$ and
$k-j$ if $k/2 < j \le k-1$.
In the case $k$ is even and $j = k/2$, the weight of $V^j(0)$ is $k/2$.

Now, $V_{(n)} = 0$ for $n < 0$ and $V_{(0)} = V^0_{(0)} = \C\1$.
Moreover, $V_{(1)} = V^0_{(1)} + V^{k-1}(0) + V^1(0)$ and
$V^0_{(1)} = \C \gm(-1)\1$, for we are assuming that $k \ge 3$. 
Note that $V^{k-1}(0) = \C e^{\gm/k} \otimes N^{k-1}(0)$ and
$V^1(0) = \C e^{-\gm/k} \otimes N^1(0)$.
Also, $u_n v \in V^0$ for $u \in V^{k-1}$ and $v \in V^1$
by \eqref{eq:unv}.
By Lemmas \ref{lem:nonzero-action} and \ref{lem:self-dual},
we can choose $E \in V^{k-1}(0)$ and
$F \in V^1(0)$ such that $E_1 F = k\1$. Then
\eqref{eq:intertwiner-lattice1} implies that $E_0 F = \gm(-1)\1$.
Let $H = \gm(-1)\1$.

\begin{lem}\label{lem:OPE}
$(1)$ $H_0 H = 0$, $H_1 H = 2k\1$.

$(2)$ $H_0 E = 2E$, $H_1 E = 0$, $H_0 F = -2F$, $H_1 F = 0$.

$(3)$ $E_0 F = H$, $E_1 F = k\1$.

$(4)$ $E_0 E = E_1 E = F_0 F = F_1 F = 0$.
\end{lem}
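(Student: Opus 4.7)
The plan is to dispatch the four parts in turn using the Heisenberg structure in $T^0$, the Borcherds commutator formula, and skew-symmetry together with weight bookkeeping from Hypothesis \ref{hypothesis:1}. Parts (1) and (2) are routine Heisenberg/Fock-space computations. Since $H = \gm(-1)\1$ and $H_n = \gm(n)$, the identities $H_0H = 0$ and $H_1H = 2k\1$ follow from $[\gm(m),\gm(n)] = m\la\gm,\gm\ra\dl_{m+n,0}$ applied to $\1$. For (2), $E$ lies in the top level $V^{k-1}(0) = \C e^{\gm/k}\otimes N^{k-1}(0)$, on which $\gm(0)$ acts as $\la\gm,\gm/k\ra = 2$ while $\gm(m)$ annihilates everything for $m>0$; hence $H_0E = 2E$ and $H_1E = 0$. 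The analogous statement on $V^1(0) = \C e^{-\gm/k}\otimes N^1(0)$ yields $H_0F = -2F$ and $H_1F = 0$.

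For (3), one half is the normalization: $E_1F = k\1$ is precisely the nonvanishing conclusion of Lemma \ref{lem:nonzero-action}(2) applied to the self-dual $V^0$ (Lemma \ref{lem:self-dual}), the scalar being absorbed into the choice of $E$ and $F$. To establish $E_0F = H$, observe first that $E_0F \in V^0_{(1)}$; using $V^0 \cong T^0\otimes N^0$ together with $N^0_{(1)} = 0$ (from $\ch N^0 = 1 + q^2 + 2q^3 + \cdots$), the weight-one subspace $V^0_{(1)}$ is precisely $\C H$, so $E_0F = \al H$ for some scalar $\al$. To pin down $\al$, apply $H_1$ and use the Borcherds commutator $[H_1, E_0] = (H_0E)_1 + (H_1E)_0 = 2E_1$, together with $H_1F = 0$ and $E_1F = k\1$, to obtain
\[
H_1(E_0F) = E_0H_1F + [H_1,E_0]F = 2E_1F = 2k\1.
\]
Comparison with $H_1H = 2k\1$ forces $\al = 1$.

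For (4), $E_nE$ lies in $V^{2(k-1)} = V^{k-2}$, whose top level has weight $1$ when $k=3$ and weight $2$ when $k\ge 4$; in either case $V^{k-2}_{(0)} = 0$. CFT-type vanishing then eliminates $E_nE$ for all $n \ge 1$: when $n\ge 2$ its weight $1-n$ is negative, and when $n=1$ it sits in $V^{k-2}_{(0)} = 0$. This already kills $E_0E$ outright when $k\ge 4$, where $V^{k-2}_{(1)} = 0$ as well. To handle the borderline $k=3$ case uniformly I would extract the coefficient of $z^{-1}$ in the skew-symmetry identity $Y(E,z)E = e^{zL(-1)}Y(E,-z)E$, yielding
\[
2E_0E = L(-1)E_1E - \tfrac{1}{2}L(-1)^2E_2E + \cdots = 0.
\]
An identical argument, with $V^{k-2}$ replaced by $V^2$ (whose top weight is also $\ge 1$ for all $k\ge 3$), produces $F_0F = F_1F = 0$.

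The main obstacle is the borderline $k=3$ case in (4), where weight alone fails to force $E_0E = F_0F = 0$ and the skew-symmetry argument becomes essential; the rest is bookkeeping once the tensor decomposition $V^0 \cong T^0\otimes N^0$ and the character condition $N^0_{(1)} = 0$ are brought to bear on the weight-one subspace $V^0_{(1)} = \C H$.
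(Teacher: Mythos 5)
Your proof is correct, and in parts (3) and (4) it takes a genuinely different route from the paper's. Parts (1) and (2) coincide with the paper's argument (the Heisenberg action of $\gm(n)$ on the top levels $\C e^{\pm\gm/k}\otimes N^{j}(0)$). For (3), the paper also normalizes $E_1F=k\1$ via Lemmas \ref{lem:nonzero-action} and \ref{lem:self-dual}, but then reads off $E_0F=\gm(-1)\1$ directly from the explicit lattice expansion \eqref{eq:intertwiner-lattice1}, implicitly using the factorization of the intertwining operator through $T^0\otimes N^0$ and the vanishing $N^0_{(1)}=0$; your alternative --- identifying $V^0_{(1)}=\C H$ and pinning down the scalar by the Borcherds commutator $[H_1,E_0]=2E_1$ --- reaches the same conclusion without invoking the explicit form of the lattice vertex operator, and makes the role of $N^0_{(1)}=0$ explicit rather than implicit. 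For (4), the paper cites \cite[(8.9.9)]{FLM} to get $E_0E=F_0F=0$ and uses $E_1E\in V^{k-2}\cap V_{(0)}=0$, $F_1F\in V^2\cap V_{(0)}=0$ exactly as you do; your replacement of the FLM citation by skew-symmetry ($2E_0E=\sum_{i\ge 1}\tfrac{(-1)^{i+1}}{i!}L(-1)^iE_iE=0$ once $E_iE=0$ for $i\ge1$) is a clean, self-contained way to dispose of the borderline case $k=3$, where $V^{k-2}_{(1)}\ne 0$ and pure weight counting does not suffice. What the paper's approach buys is brevity via the concrete lattice realization; what yours buys is independence from that realization, which fits well with the abstract setting of Hypothesis \ref{hypothesis:1} where $E$ and $F$ are not literal lattice exponentials.
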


\begin{proof}
Since $H_n = \gm(n) \otimes 1$ for $n \in \Z$ as an operator
on the $T^0 \otimes N^0$-module $T^j \otimes N^j$ and since
$\gm(n) e^{\pm \gm/k} = \pm 2 \delta_{n,0} e^{\pm \gm/k}$
if $n \ge 0$, (1) and (2) hold. We have chosen $E$ and $F$ so that
(3) holds. By \cite[(8.9.9)]{FLM}, $E_0 E = F_0 F = 0$.
We also have $E_1 E \in V^{k-2} \cap V_{(0)} = 0$ and
$F_1 F \in V^2 \cap V_{(0)} = 0$. Hence (4) holds.
\end{proof}

We want to show that the vertex operator algebra $V$ is generated by $H$, $E$ and $F$.
Let $U$ be the vertex operator subalgebra of $V$ generated by
$H$, $E$ and $F$.
Note that $A_n B = 0$ with $A, B \in \{ H, E, F\}$ and $n \ge 2$, for
the weight of $A_n B$ is $-n+1$.
Then Lemma \ref{lem:OPE} implies that the component operators
$H_n$, $E_n$, $F_n$, $n \in \Z$ give a level $k$ representation of
$\hsl$ under the correspondence
\begin{equation*}
h(n) \leftrightarrow H_n, \qquad
e(n) \leftrightarrow E_n, \qquad
f(n) \leftrightarrow F_n.
\end{equation*}

Since $A_n\1 = 0$ if $n \ge 0$ and since
$A_{-1}\1 = A$ for $A \in \{H,E,F\}$, the map
\begin{equation}\label{eq:map-to-U}
h(-1)\1 \mapsto H, \qquad
e(-1)\1 \mapsto E, \qquad
f(-1)\1 \mapsto F
\end{equation}
lifts to a surjective homomorphism $\varphi: V(k,0) \rightarrow U$
of vertex operator algebras by the universality of the Weyl
module $V(k,0)$. The image $\varphi(\J)$ of the maximal ideal
$\J$ of $V(k,0)$ is a maximal ideal of $U$ and the
quotient vertex operator algebra $U/\varphi(\J)$ is isomorphic
to $L(k,0) = V(k,0)/\J$. Hence there is a surjective homomorphism
$\psi: U \rightarrow L(k,0)$ of vertex operator algebras such that
$\psi(H) = h(-1)\1$, $\psi(E) = e(-1)\1$, and $\psi(F) = f(-1)\1$.
Recall that we use the same symbols to denote elements
of $V(k,0)$ and their images in $L(k,0)$.

Since 
$H_n = \gm(n) \otimes 1$, we have
\begin{equation*}
\gm(-n_1)\gm(-n_2)\cdots\gm(-n_r)\1 \in U
\end{equation*}
for $n_1 \ge n_2 \ge \cdots \ge n_r \ge 1$, $r = 0, 1, 2, \ldots$.

\begin{lem}\label{lem:N0-in-U}
$N^0 \subset U$.
\end{lem}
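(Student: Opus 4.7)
The plan is to invoke Hypothesis \ref{hypothesis:1}(3). Since $N^0$ is generated as a vertex operator algebra by $N^0_{(2)}+N^0_{(3)}$ and the character $\ch N^0 = 1+q^2+2q^3+\cdots$ forces $\dim N^0_{(2)}=1$ and $\dim N^0_{(3)}=2$, it suffices to exhibit one nonzero vector in $U\cap N^0_{(2)}$ and two linearly independent vectors in $U\cap N^0_{(3)}$. I shall use both of the surjections $\varphi:V(k,0)\twoheadrightarrow U$ and $\psi:U\twoheadrightarrow L(k,0)$ already constructed.

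For the weight-two part, I would take $\tilde\om := \varphi(\om_{\mraff})-\om^1\in U$: the first summand lies in $U$ because $\varphi$ is a vertex operator algebra homomorphism landing in $U$, and the second because $\om^1=\frac{1}{4k}H(-1)^2\1$ is built from $H\in U$ by vertex algebra operations. The purely affine computation that shows $\om_{\mrpara}\in N_0$ inside $V(k,0)$ transports through $\varphi$ to give $H(m)\tilde\om=0$ for all $m\ge0$, so $\tilde\om\in N^0$. It is nonzero because its Virasoro self-OPE has the nonzero central term $(k-1)/(k+2)$. The one-dimensionality of $N^0_{(2)}$ then forces $\om^2\in\C\tilde\om\subset U$ (in fact $\tilde\om=\om^2$, upon matching the $z^{-4}$ and $z^{-2}$ coefficients of $Y(\tilde\om,z)\tilde\om$). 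This supplies the first element $L^2(-1)\om^2=(\om^2)_{-2}\1\in U\cap N^0_{(3)}$, which is nonzero because $L^2(1)L^2(-1)\om^2=4\om^2\neq0$.

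For the second weight-three element I would transport the singular vector $W^3$ of formula \eqref{eq:W3} through $\varphi$, setting $\tilde W^3 := \varphi(W^3)\in U$. The identities $h(m)W^3=0$ for $m\ge1$, together with $h$-weight zero, are consequences of the affine commutation relations alone inside $V(k,0)$, so they survive $\varphi$ and place $\tilde W^3\in N^0$. Its nonvanishing is detected by the secondary surjection $\psi$: $\psi(\tilde W^3)=W^3\in L(k,0)$ is a nonzero generator of the parafermion algebra $M^0$. Linear independence of $\tilde W^3$ and $L^2(-1)\om^2$ rests on Virasoro primacy with respect to $\om^2=\varphi(\om_{\mrpara})$: the singular-vector relations $L_{\mrpara}(n)W^3=0$ for $n\ge1$ push forward to $L^2(n)\tilde W^3=0$, whereas $L^2(1)L^2(-1)\om^2=4\om^2\neq0$. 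Hence $U\cap N^0_{(3)}=N^0_{(3)}$, and together with $\om^2\in U$ and Hypothesis \ref{hypothesis:1}(3) we conclude $N^0\subset U$. I expect the main care will be keeping the two quotients $V(k,0)\to U\to L(k,0)$ straight and checking that the relevant properties (affine invariance and Virasoro primacy of $W^3$) descend to $\tilde W^3\in U$; each does, because each is a formal consequence of the affine relations that $\varphi$ preserves.
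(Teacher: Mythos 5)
Your proposal is correct and follows essentially the same route as the paper: both arguments push $\om_{\mrpara}$ and $W^3$ through $\varphi$, use the one-dimensionality of $N^0_{(2)}$ to identify $\varphi(\om_{\mrpara})$ with $\om^2$, use the Virasoro-singular-vector property of $\varphi(W^3)$ against $L^2(-1)\om^2$ to fill out the two-dimensional space $N^0_{(3)}$, and conclude via the generation hypothesis. Your use of $\psi$ to certify $\varphi(W^3)\neq 0$ is a slightly more explicit justification of a point the paper leaves implicit, but it is not a different argument.
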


\begin{proof}
The image of the conformal vector $\om_\mraff$ of $V(k,0)$ under
$\varphi$ is
\begin{equation*}
\varphi(\om_\mraff) = \frac{1}{2(k+2)}
\Big( \frac{1}{2}H_{-1}H + E_{-1}F + F_{-1}E \Big),
\end{equation*}
which is contained in $V^0$ by \eqref{eq:unv}. We also have
\begin{equation*}
\varphi(\om_h) = \frac{1}{4k}H_{-1}H \in T^0.
\end{equation*}

Note that $N^0 = \{ v \in V^0 \,|\, H_m v = 0 \textrm{ for } m \ge 0 \}$.
Since $\om_\mrpara$ is the conformal vector of $N_0$ (cf. \eqref{eq:Nlambda})
of central charge $2(k-1)/(k+2)$,
its image $\varphi(\om_\mrpara)$ is a Virasoro element of
the same central charge. 
Moreover, $h(m) \om_\mrpara = 0$
implies $H_m \varphi(\om_\mrpara) = 0$, $m \ge 0$.
Thus $\varphi(\om_\mrpara)$ is contained in $N^0$.
Since $N^0_{(2)} = \C\om^2$ by our hypothesis, we conclude that
$\varphi(\om_\mrpara) = \om^2$. In particular, $\om^2 \in U$.

Next, we consider the image of $W^3$ \eqref{eq:W3} under the homomorphism
$\varphi$,
\begin{align*}
\varphi(W^3) &= k^2 H_{-3}\1 + 3kH_{-2}H_{-1}\1 + 2(H_{-1})^3\1
- 6kH_{-1}E_{-1}F_{-1}\1\\
& \quad + 3k^2E_{-2}F_{-1}\1 - 3k^2E_{-1}F_{-2}\1.
\end{align*}

As in the case of $\om_\mrpara$, we have
$\varphi(W^3) \in V^0$ by \eqref{eq:unv} and furthermore,
$\varphi(W^3) \in N^0$, for $W^3 \in N_0$.
Recall that $W^3$ is a Virasoro singular vector with
respect to the conformal vector $\om_\mrpara$ of $N_0$.
Hence $\varphi(W^3)$ is a Virasoro singular vector with respect to the conformal
vector $\om^2$ of $N^0$. Now, the weight $3$ subspace $N^0_{(3)}$
is of dimension $2$ by our hypothesis. Thus $\om^2_0 \om^2$ and
$\varphi(W^3)$ form a basis of $N^0_{(3)}$. Hence the lemma holds,
for we are assuming that the vertex operator algebra $N^0$ is
generated by $N^0_{(2)}$ and $N^0_{(3)}$.
\end{proof}

\begin{lem}\label{lem:epmgmN0-inU}
$e^{\pm \gm} \otimes N^0 \subset U$.
\end{lem}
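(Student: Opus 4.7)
The plan is first to reduce the claim to showing $e^{\pm\gm}\in U$: the subalgebra $T^0 \ot N^0 \subset V$ carries the standard tensor product vertex operator structure, under which $(e^{\pm\gm}\ot \1)_{-1}(\1 \ot n) = e^{\pm\gm}\ot n$ for every $n \in N^0$. Since $N^0 \subset U$ by Lemma \ref{lem:N0-in-U}, once $e^{\pm\gm} \in U$ the desired containment $e^{\pm\gm}\ot N^0 \subset U$ follows immediately.

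To obtain $e^\gm \in U$, I would consider $E_{-1}^k \1 = \varphi(e(-1)^k \1) \in U$ and pin it down up to a scalar using weight and $H_0$-eigenvalue constraints inside $V$. Since $E \in V^{k-1}$, iterated application of \eqref{eq:unv} gives $E_{-1}^k\1 \in V^0 \cong T^0 \ot N^0$, and from $H_0 E = 2E$ (Lemma \ref{lem:OPE}) its $\gm(0)$-eigenvalue is $2k$ while its total $\om$-weight is $k$. In $V_{\Z\gm} = M(1) \ot \C[\Z\gm]$, the $\gm(0)$-eigenvalue $2k$ subspace is $M(1)\ot \C e^\gm$, whose minimum $\om^1$-weight equals $k = \la\gm,\gm\ra/2$ and is attained only on $\C e^\gm$. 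Together with $N^0_{(0)} = \C\1$, these constraints force $E_{-1}^k\1 \in \C e^\gm$.

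For nonvanishing, I would use the surjective vertex operator algebra homomorphism $\psi: U \to L(k,0)$ introduced in the discussion preceding Lemma \ref{lem:N0-in-U}: by construction $\psi(E_{-1}^k\1) = e(-1)^k\1$ in $L(k,0)$. The maximal ideal $\J = U(\hsl)\cdot e(-1)^{k+1}\1$ of $V(k,0)$ is generated by the singular vector $e(-1)^{k+1}\1$ of $\om_{\mraff}$-weight $k+1$, so all its elements have weight at least $k+1$; the weight-$k$ vector $e(-1)^k\1$ therefore lies outside $\J$ and remains nonzero in $L(k,0)$. Hence $E_{-1}^k\1 \ne 0$ in $U$, so $E_{-1}^k\1 = c\,e^\gm$ with $c \ne 0$ and $e^\gm \in U$. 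The symmetric argument with $F_{-1}^k\1$ gives $e^{-\gm}\in U$.

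The main obstacle is the middle step: one must check that the $\gm(0)$-eigenvalue and $\om$-weight constraints inside $V^0 \cong T^0 \ot N^0$ really cut out the one-dimensional space $\C e^\gm$, which depends both on the hypothesis $N^0_{(0)} = \C\1$ and on the explicit Fock module structure of $V_{\Z\gm}$. Once this identification is in place, nonvanishing comes almost for free from the quotient $U \twoheadrightarrow L(k,0)$.
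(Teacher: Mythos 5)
Your proposal is correct and follows essentially the same route as the paper: both arguments hinge on the element $\varphi(e(-1)^k\1)=(E_{-1})^k\1\in U$, its nonvanishing via $e(-1)^k\1\notin\J$, the $H_n$-eigenvalue computation placing it in $e^{\gm}\ot N^0$, and the already-established containment $N^0\subset U$ to sweep out all of $e^{\gm}\ot N^0$ (with the symmetric argument for $e^{-\gm}$). The only cosmetic difference is that you pin down $(E_{-1})^k\1$ as an exact scalar multiple of $e^{\gm}\ot\1$ using the weight bound, whereas the paper only needs that it is a nonzero element of the eigenspace $e^{\gm}\ot N^0$ and then invokes the simplicity of $N^0$.
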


\begin{proof}
Since $H_n = \gm(n) \otimes 1$ and since 
$V = \bigoplus_{j=0}^{k-1} V^j$
with $V^j \cong T^j \otimes N^j$, it follows from \eqref{def:Vj} that
\begin{equation*}
e^{\pm \gm} \otimes N^0  = \{ v \in V \,|\, H_n v = \pm 2k\dl_{n,0}v
\text{ for } n \ge 0\}.
\end{equation*}

Now, $e(-1)^k\1 \not\in \J$ and so
$\varphi(e(-1)^k\1) = (E_{-1})^k\1$ is a nonzero element of $U$.
Note also that
\begin{equation*}
h(n) e(-1)^k\1 = 2k\dl_{n,0}e(-1)^k\1, \quad n \ge 0
\end{equation*}
in the Weyl module $V(k,0)$. Taking the image under the
homomorphism $\varphi$, we have
\begin{equation*}
H_n (E_{-1})^k\1 = 2k\dl_{n,0}(E_{-1})^k\1, \quad n \ge 0.
\end{equation*}

This implies that $(E_{-1})^k\1$ is a nonzero element of
$e^\gm \otimes N^0$. Then we have $e^\gm \otimes N^0 \subset U$
by Lemma \ref{lem:N0-in-U}. Replacing $e(-1)$ with $f(-1)$ and
$E$ with $F$ in the above argument, we can also show that
$e^{-\gm} \otimes N^0 \subset U$.
\end{proof}

\begin{lem}\label{lem:U-equal-V}
$U = V$.
\end{lem}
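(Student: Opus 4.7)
The plan is to exploit the $\sigma$-eigenspace decomposition $V=\bigoplus_{j=0}^{k-1}V^j$ together with the simplicity of the $V^0$-modules $V^j$. Since $V^0$ is a simple vertex operator algebra and each $V^j$ for $1\le j\le k-1$ is a simple $V^0$-module, as noted above, it suffices to show that $V^0\subset U$ and that $U\cap V^j\ne 0$ for every $j\ne 0$; simplicity of $V^j$ will then force $V^j\subset U$.

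First I would assemble $V^0\subset U$ from the lemmas already established. Lemma \ref{lem:N0-in-U} gives $N^0\subset U$, and specializing Lemma \ref{lem:epmgmN0-inU} to the vacuum $\1^2\in N^0$ yields $e^{\pm\gm}\in U$. Iterating the component action of $H=\gm(-1)\1\in U$ on the vacuum generates all of $M(1)$ inside $U$. Since $V_{\Z\gm}=M(1)\otimes\C[\Z\gm]$ is generated as a vertex algebra by $M(1)$ together with $e^{\pm\gm}$, we obtain $V_{\Z\gm}\subset U$, and combining with $N^0\subset U$ gives $V^0=V_{\Z\gm}\otimes N^0\subset U$.

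Next I would exhibit a nonzero element of $U\cap V^j$ for each $j\in\{1,\ldots,k-1\}$ by taking iterated products of $E$. Since $E\in V^{k-1}$, the grading rule \eqref{eq:unv} implies $(E_{-1})^m\1\in V^{k-m}$. To see this element is nonzero I would push it through the surjection $\psi:U\to L(k,0)$ already constructed: $\psi((E_{-1})^m\1)=e(-1)^m\1$, which is nonzero in $L(k,0)$ for every $m\le k$ because the maximal ideal $\J$ of $V(k,0)$ is generated by $e(-1)^{k+1}\1$. Choosing $m=k-j$ for $1\le j\le k-1$ produces the desired nonzero element of $V^j\cap U$.

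Finally, since each $V^j$ is a simple $V^0$-module and $V^0\subset U$, the $V^0$-submodule of $V$ generated by any nonzero vector of $V^j\cap U$ must equal $V^j$; this submodule lies in $U$ because $U$ is closed under the component operators of all its elements. Hence $V^j\subset U$ for every $j$, and $V=\bigoplus_{j=0}^{k-1}V^j\subset U$. The only place where substantive input is needed beyond bookkeeping is the non-vanishing of $(E_{-1})^m\1$ for $m\le k$; this is precisely what forced the detour through the Weyl module $V(k,0)$ and its quotient $L(k,0)$ above, and once it is granted, the $\sigma$-grading combined with simplicity of each $V^j$ finishes the argument.
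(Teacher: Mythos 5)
Your proof is correct and follows the same overall strategy as the paper's: establish $V^0\subset U$ from Lemmas \ref{lem:N0-in-U} and \ref{lem:epmgmN0-inU}, then invoke the simplicity of the $V^j$ as $V^0$-modules. The only variation is in how a nonzero vector of $U\cap V^j$ is produced: the paper starts from $F\in V^1$ and propagates to all $j$ via the non-vanishing of products (\cite[Proposition 11.9]{DL}), whereas you exhibit $(E_{-1})^{k-j}\1$ directly and verify it is nonzero through the quotient map to $L_{\hsl}(k,0)$ --- a harmless and equally valid variant.
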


\begin{proof}
The vertex operator algebra $V_{\Z\gm}$ is generated by $e^\gm$
and $e^{-\gm}$. Hence Lemma \ref{lem:epmgmN0-inU} implies that $U$ contains
$V^0$. Recall that $V^j$ is a simple $V^0$-module. Since
$F \in V^1$, it follows that $V^1 \subset U$. Then $V^j \subset U$
for all $j$ by \cite[Proposition 11.9]{DL}, and we have $U = V$
as desired.
\end{proof}

Since $V$ is a simple vertex operator algebra, Lemma \ref{lem:U-equal-V} implies
the following theorem.

\begin{thm}\label{th:Sec3}
$V \cong L_{\hsl}(k,0)$.
\end{thm}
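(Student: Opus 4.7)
The plan is to leverage the surjective vertex operator algebra homomorphism $\varphi: V(k,0) \to U$ that was constructed in the paragraph preceding Lemma \ref{lem:N0-in-U}. That construction produced $\varphi$ by sending the generators $h(-1)\1, e(-1)\1, f(-1)\1$ of the Weyl module to $H, E, F$, and its surjectivity is built into the definition of $U$. Combined with Lemma \ref{lem:U-equal-V}, this gives a surjective homomorphism $\varphi: V(k,0) \twoheadrightarrow V$ of vertex operator algebras.

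Next, I would analyze $\ker\varphi$ as an ideal of the Weyl module $V(k,0)$. Since $\varphi(\1) = \1 \neq 0$ and $V$ is simple by Hypothesis \ref{hypothesis:1}(1), the kernel is a proper ideal of $V(k,0)$ and $V(k,0)/\ker\varphi \cong V$ is simple as a VOA (equivalently, as a module over itself). Because any proper ideal of $V(k,0)$ sits inside the unique maximal ideal $\J$ (generated by $e(-1)^{k+1}\1$, as recalled in Section \ref{subsec:parafermion}), we get $\ker\varphi \subseteq \J$. Conversely, simplicity of $V$ forces $\ker\varphi$ to be a maximal proper ideal, so $\ker\varphi = \J$.

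Putting these together yields an isomorphism of vertex operator algebras $V \cong V(k,0)/\J = L_{\hsl}(k,0)$, which is exactly the statement of Theorem \ref{th:Sec3}. No further calculation should be necessary beyond what is already packaged in Lemmas \ref{lem:OPE}--\ref{lem:U-equal-V}; the only mildly delicate point is verifying that $\ker\varphi$ is nonzero (so that the quotient has the right central charge $3k/(k+2)$ rather than being all of $V(k,0)$), but this is automatic since the central charges of $V$ and $L_{\hsl}(k,0)$ agree by Hypothesis \ref{hypothesis:1}(1), while $V(k,0)$ itself is not simple. Hence the argument reduces to a one-line application of the simplicity hypothesis to the surjection $\varphi$.
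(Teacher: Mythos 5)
Your argument is correct and is exactly the paper's own route: the paper likewise concludes from $U=V$ (Lemma \ref{lem:U-equal-V}) and the simplicity of $V$ that the surjection $\varphi\colon V(k,0)\to V$ must have kernel equal to the unique maximal ideal $\J$, whence $V\cong V(k,0)/\J=L_{\hsl}(k,0)$. The only blemish is your closing aside: the Weyl module $V(k,0)$ already has central charge $3k/(k+2)$, so matching central charges cannot detect that $\ker\varphi\neq 0$ --- but this is moot, since the uniqueness of the maximal ideal $\J\neq 0$ together with the maximality of $\ker\varphi$ (forced by simplicity of $V$) already gives $\ker\varphi=\J$.
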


\section{A characterization of $K(\mathfrak{sl}_2,k)$}\label{sec:characterization-M0}
In this section we apply the results of Section \ref{sec:characterization-Lk0} to
obtain a characterization of the parafermion vertex operator algebra $K(\mathfrak{sl}_2,k)$
associated with $\mathfrak{sl}_2$. 
Let $k \ge 3$ be an integer. 
Throughout this section we assume the following hypothesis.

\begin{hyp}\label{hypothesis:2}
$(1)$ $N^0$ is a simple, self-dual, rational and $C_2$-cofinite vertex operator algebra 
of CFT-type with central charge $2(k-1)/(k+2)$.

$(2)$ $\ch N^0 = 1 + q^2 + 2q^3 + \cdots$.

$(3)$ $N^0$ is generated by $N^0_{(2)}$ and $N^0_{(3)}$.

$(4)$ There exist simple current $N^0$-modules 
$N^j$, $1 \le j \le k-1$ 
such that the top weight of $N^j$ is $j(k-j)/k$ and 
the fusion rules among $N^j$'s are
\begin{equation*}
N^i \times N^j = N^{i+j}, \quad 0 \le i,j \le k-1.
\end{equation*}
Here the indices $i$, $j$ are considered to be modulo $k$. 

$(5)$ Any simple $N^0$-module except $N^0$ itself has positive top weight.

\end{hyp}

Let $V_{\Z\gm - j\gm/k}$ 
be as in Section \ref{subsec:latticeVOA}. Thus $\la \gm,\gm \ra = 2k$. 
Let
\begin{equation*}
V^j = V_{\Z\gm - j\gm/k} \ot N^j, \quad 0 \le j \le k-1
\end{equation*}
be a tensor product of vector spaces 
$V_{\Z\gm - j\gm/k}$ and $N^j$.
Then $V^0 = V_{\Z\gm} \ot N^0$ carries a structure of vertex operator algebra. 
In fact, $V^0$ is a simple, self-dual, rational and $C_2$-cofinite vertex operator algebra 
of CFT-type with central charge $3k/(k+2)$ by our assumption on $N^0$. 
Moreover, any simple $V^0$-module except $V^0$ itself has positive top weight.  
The $V^j$, $0 \le j \le k-1$ 
are simple $V^0$-modules 
\cite[Section 4.7]{FHL}. 
These simple modules are simple current $V^0$-modules and 
the fusion rules among them are 
\begin{equation}\label{eq:V0-fusion}
V^i \times V^j = V^{i+j}, \quad 0 \le i,j \le k-1
\end{equation}
by \eqref{eq:VZgm-fusion} and Hypothesis \ref{hypothesis:2}. 

The top weight of the $V^0$-module $V^j$ is a sum of
those of the $V_{\Z\gm}$-module $V_{\Z\gm - j\gm/k}$ and
the $N^0$-module $N^j$, so that it is $j$ if
$0 \le j < k/2$, $k-j$ if $k/2 < j \le k-1$ and $k/2$ if
$k$ is even and $j = k/2$. In particular, $V^j$ has integral weight.

Therefore, $\bop_{j=0}^{k-1} V^j$ has a vertex operator algebra structure 
by Theorem \ref{thm:SCE}. 
It is a $\Z_k$-graded simple current extension of $V^0$. 

Now, we can apply Theorem \ref{th:Sec3} to conclude that 
the vertex operator algebra $\bop_{j=0}^{k-1} V^j$ 
is isomorphic to $L_{\hsl}(k,0)$. 
Thus the following theorem holds. 

\begin{thm}\label{thm:Sec4}
The space
$\bop_{j=0}^{k-1} V^j$ is a $\Z_k$-graded simple current extension of $V^0$ 
and it is isomorphic to the vertex operator algebra $L_{\hsl}(k,0)$, where 
$V^j = V_{\Z\gm - j\gm/k} \ot N^j$ with $\la \gm,\gm \ra = 2k$ and 
$N^j$ being as in Hypothesis \ref{hypothesis:2}. 
\end{thm}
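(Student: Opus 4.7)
The plan is to proceed in two stages: first, invoke Theorem \ref{thm:SCE} on simple current extensions to equip $\bigoplus_{j=0}^{k-1}V^j$ with a vertex operator algebra structure extending $V^0$, and then invoke Theorem \ref{th:Sec3} to identify this extension with $L_{\hsl}(k,0)$.

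For the first stage, I would verify Hypothesis \ref{hypothesis:0} with the role of $V$ played by $V^0=V_{\Z\gm}\ot N^0$, the group $D$ being $\Z_k$, and the simple currents being the $V^j$. The simplicity, self-duality, rationality, $C_2$-cofiniteness, and CFT-type properties of $V^0$ all pass through the tensor product from the corresponding properties of $V_{\Z\gm}$ (Section \ref{subsec:latticeVOA}) and $N^0$ (Hypothesis \ref{hypothesis:2}(1)). Every simple $V^0$-module is a tensor product of a simple $V_{\Z\gm}$-module and a simple $N^0$-module, so its top weight is a sum of non-negative terms that vanishes exactly when both tensor factors are trivial, i.e. only for $V^0$ itself, by Hypothesis \ref{hypothesis:2}(5). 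The $V^j$'s are simple currents with fusion rules $V^i\times V^j=V^{i+j}$ from \eqref{eq:VZgm-fusion} and Hypothesis \ref{hypothesis:2}(4). Their integrality as required by Hypothesis \ref{hypothesis:0}(3) is a direct calculation: combining the top weight of $V_{\Z\gm-j\gm/k}$ from Section \ref{subsec:latticeVOA} with the top weight $j(k-j)/k$ of $N^j$ yields $j$, $k/2$, or $k-j$ according as $0\le j<k/2$, $j=k/2$, or $k/2<j\le k-1$, which matches the observation made immediately before the theorem statement.

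For the second stage, I would check that the resulting $V:=\bigoplus_{j=0}^{k-1}V^j$ satisfies Hypothesis \ref{hypothesis:1}. The central charge is $1+2(k-1)/(k+2)=3k/(k+2)$, and $V$ remains simple and of CFT-type as a simple current extension of such a VOA. The lattice subalgebra $T^0=V_{\Z\gm}$ sits inside $V$ with conformal vector $\om^1=\frac{1}{4k}\gm(-1)^2\1$, and $\om_2\om^1=0$ because any Virasoro element is quasi-primary. The commutant of $T^0$ in $V$ is $N^0$ (embedded as $\1\ot N^0\subset V^0\subset V$), with $\om^2=\om-\om^1$ as its conformal vector; the requirements on central charge, character, and generation by weights $2$ and $3$ are exactly Hypothesis \ref{hypothesis:2}(1)--(3). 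The decomposition $V=\bigoplus V_{\Z\gm-j\gm/k}\ot N^j$ is built in by construction, and since $\gm(0)$ acts by $\la\gm,-j\gm/k\ra=-2j$ on $e^{-j\gm/k}$ and trivially on $N^j$, the identification of $N^j$ as the $-2j$-eigenspace of $\gm(0)$ holds as required. All conditions are met, so Theorem \ref{th:Sec3} gives $V\cong L_{\hsl}(k,0)$.

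The substantive work has already been carried out in Sections 2 and 3, so this proof is largely a matter of checking hypotheses. The only genuine calculation is the integrality of the top weights of the $V^j$, which is where the specific form $j(k-j)/k$ of the top weight of $N^j$ in Hypothesis \ref{hypothesis:2}(4) enters in an essential way, since it precisely cancels the fractional part of the top weight of $V_{\Z\gm-j\gm/k}$.
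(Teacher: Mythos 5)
Your proposal is correct and follows essentially the same route as the paper: the paper likewise verifies that the $V^j=V_{\Z\gm-j\gm/k}\ot N^j$ are integral-weight simple currents over $V^0$ with $\Z_k$ fusion rules (the same top-weight cancellation giving $j$, $k-j$, or $k/2$), invokes Theorem \ref{thm:SCE} to build the extension, and then applies Theorem \ref{th:Sec3}. Your explicit verification of Hypothesis \ref{hypothesis:1} for the resulting extension only spells out details the paper leaves implicit.
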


Since $N^0$ is the commutant of $V_{\Z\gm}$ in the vertex operator algebra 
$\bop_{j=0}^{k-1} V^j$ and since
the parafermion vertex operator algebra $K(\mathfrak{sl}_2,k)$ is the
commutant of $V_{\Z\gm}$ in $L_{\hsl}(k,0)$, 
the following theorem is a consequence of Theorem \ref{thm:Sec4}.

\begin{thm}\label{thm:characterize-paraf}
Let $N^0$ be as in Hypothesis \ref{hypothesis:2}. Then 
$N^0$ is isomorphic to the parafermion vertex operator
algebra $K(\mathfrak{sl}_2,k)$ associated with $\mathfrak{sl}_2$ for $k \ge 3$.
\end{thm}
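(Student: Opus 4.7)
The plan is to deduce this theorem directly from Theorem \ref{thm:Sec4}, which provides a vertex operator algebra isomorphism $\Phi \colon \bop_{j=0}^{k-1} V^j \isomap L_{\hsl}(k,0)$ with $V^j = V_{\Z\gm - j\gm/k} \ot N^j$. The task is to verify that $\Phi$ identifies the commutant of the lattice sub-VOA $V_{\Z\gm}$ in $\bop V^j$ with the commutant of $V_{\Z\gm}$ in $L_{\hsl}(k,0)$; the former will turn out to be $N^0$ and the latter is $M^0 = K(\mathfrak{sl}_2,k)$, so the conclusion follows by restricting $\Phi$.

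First I would pin down both commutants. On the source side, any element $v \in \bop V^j$ commuting with $V_{\Z\gm}$ is in particular annihilated by all modes $\gm(m)$, $m \ge 0$, of $\gm(-1)\1 \in V_{\Z\gm}$; since $\gm(0)$ acts on $V_{\Z\gm - j\gm/k} \ot N^j$ with eigenvalues $\{2kn - 2j : n \in \Z\}$, the condition $\gm(0)v = 0$ restricts $v$ to the $j=0$ summand $V^0 = V_{\Z\gm} \ot N^0$, and the conditions $\gm(m)v = 0$ for $m \ge 1$ then restrict $v$ to $\C\1 \ot N^0 \cong N^0$. The reverse inclusion $N^0 \subset \on{Com}_{\bop V^j}(V_{\Z\gm})$ is immediate from the tensor product structure of $V^0$. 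The same reasoning applied to the decomposition \eqref{eq:Lk0-dec} together with \eqref{def:Mj} yields $\on{Com}_{L_{\hsl}(k,0)}(V_{\Z\gm}) = M^0 = K(\mathfrak{sl}_2,k)$.

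The remaining step is to verify that $\Phi$ carries the source copy of $V_{\Z\gm}$ onto the target copy of $V_{\Z\gm}$. Inspecting the construction of $\Phi$ in the proof of Theorem \ref{th:Sec3}, it matches the $\hsl$-triple $H = \gm(-1)\1, E, F$ in $\bop V^j$ with the canonical generators $h(-1)\1, e(-1)\1, f(-1)\1$ of $L_{\hsl}(k,0)$, and hence preserves the Heisenberg sub-VOA $M_{\wh}(k,0)$ generated by $H$. Moreover, in either ambient VOA the vector $e^{\pm\gm}$ is characterized up to a nonzero scalar as the unique $\om$-weight $k$ vector annihilated by $\gm(m)$ for $m \ge 1$ with $\gm(0)$-eigenvalue $\pm 2k$; hence $\Phi(e^{\pm\gm}) \in \C^\times e^{\pm\gm}$. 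Since $V_{\Z\gm}$ is generated as a VOA by $M_{\wh}(k,0)$ together with $e^{\pm\gm}$, we conclude $\Phi(V_{\Z\gm}) = V_{\Z\gm}$, and restricting $\Phi$ to commutants produces the desired isomorphism $N^0 \cong K(\mathfrak{sl}_2,k)$.

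The only real obstacle is this matching of the two $V_{\Z\gm}$ sub-VOAs under $\Phi$; but as above it reduces to the elementary Heisenberg-weight uniqueness statement for $e^{\pm\gm}$ within each ambient VOA, combined with the explicit form of $\Phi$ on the $\hsl$-generators.
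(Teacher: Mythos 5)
Your proposal is correct and takes essentially the same route as the paper: the paper's proof of Theorem \ref{thm:characterize-paraf} is precisely the observation that $N^0$ is the commutant of $V_{\Z\gm}$ in $\bigoplus_{j=0}^{k-1}V^j$ while $K(\mathfrak{sl}_2,k)$ is the commutant of $V_{\Z\gm}$ in $L_{\hsl}(k,0)$, so the result follows from Theorem \ref{thm:Sec4}. The only difference is that you additionally verify that the isomorphism of Theorem \ref{thm:Sec4} carries one copy of $V_{\Z\gm}$ onto the other (via its action on $H,E,F$ and the weight/eigenvalue characterization of $e^{\pm\gm}$), a point the paper leaves implicit; your verification of it is sound.
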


\section{$W$-algebras}

In this section we recall some results on $W$-algebras.

\subsection{$W$-algebra 
$\W_\ell(\mathfrak{sl}_k)$}\label{subsec:W-algebra}
Let $\g$ be a finite-dimensional simple Lie algebra,
$(~|~)$ the normalized invariant inner product of $\g$,
and let $ \widehat{\g}$ be the affine Lie algebra
associated with $\g$ and $(~|~)$:
\begin{align*}
 \widehat{\g}=\g\otimes\mathbb{C}[t,t^{-1}]\oplus \mathbb{C} C,
\end{align*}
where
$C$ is the central element.
Let $\widehat{\h}=\h\oplus \mathbb{C} C$ be the 
Cartan subalgebra of $\widehat{\g}$,
where $\finh$ is a Cartan subalgebra of $\g$.
Let $\wh^*=\h^*\oplus \C \Lam_0$ be the dual of $\h$,
where $\Lam_0(C)=1$, $\Lam_0(\h)=0$.

Let $V_{\wg}(\ell,0)
=U(\widehat{\g})\otimes_{U(\g[t]\oplus \C C)}\C_{\ell}$ be 
the {\em universal affine vertex algebra} associated with $\g$ at level $\ell \in \C$.
Here $\g \ot \C[t]$
acts as $0$  and $C$ acts as $\ell$ on $\C_{\ell}$ .

For a weight $\lam$ of $\g$
denote by $L_{\widehat{\g}}(\ell,\lam)$ the
simple highest weight module for 
$\widehat{\g}$ with highest weight
$\widehat{\lam}_{\ell} = \lam+\ell\Lam_0$.
The vacuum simple module
$L_{\widehat{\g}}(\ell,0)$ is  a quotient vertex algebra
of $V_{\wg}(\ell,0)$
and called the simple {\em affine vertex algebra
associated with $\fing$ at level $\ell$}.

Let $\W^{\ell}(\g)$
be the 
$W$-algebra associated with
$\fing$ and its principal nilpotent element
at a non-critical level $\ell$ 
defined by the quantized Drinfeld-Sokolov
reduction 
\cite{FF90}:
\begin{align*}
\W^{\ell}(\g)=H_{DS}^0(V_{\wg}(\ell,0)),
\end{align*}
where $H^{\bullet}_{DS}(M)$ is the  cohomology of the BRST complex
for the quantized Drinfeld-Sokolov reduction with coefficient
in a $\widehat{\g}$-module $M$ (\cite{FF90}).
Denote by 
$\W_{\ell}(\g)$ the unique simple quotient of $\W^{\ell}(\g)$.

Later we shall set $\fing=\mf{sl}_k$, in which case
 $\W^{\ell}(\fing)$ is isomorphic to the {\em $W_k$-algebra}
defined by Fateev and Lukyanov \cite{FL88} (cf. \cite{AM}).

The level $\ell$ is called  {\em admissible}
if
$L_{\widehat{\g}}(\ell,0)$
 is an admissible representation \cite{KacWak89}
of
$\widehat{\g}$.
If this is the case,  
the associated variety 
$X_{L_{\widehat{\g}}(\ell,0)}$
of $L_{\widehat{\g}}(\ell,0)$ is contained 
in the nullcone
$\mathcal{N}$ of $\g$ (\cite{Ara12}),
where 
$X_{L_{\widehat{\g}}(\ell,0)}$ is the maximal spectrum of Zhu's
$C_2$-algebra of $L_{\widehat{\g}}(\ell,0)$ (\cite{Ara12}). 
An admissible number $\ell$
is called {\em non-degenerate}
if $X_{L_{\widehat{\g}}(\ell,0)}=\mathcal{N}$ (\cite{FKW,Ara12}).
For an admissible level $\ell$,
we have 
(\cite{Ara10})
\begin{align*}
H^{\bullet}_{DS}
(L_{\widehat{\g}}(\ell,0))\ne 0 \quad \iff \quad \ell\text{ is non-degenerate}.
\end{align*}

The following assertion was conjectured by Frenkel, Kac and Wakimoto \cite{FKW}.
\begin{thm}[\cite{Arakawa,Ara10,A2012Dec}]\label{Th:rationality-C2-for-BRST-W}
Let $\ell$ be a non-degenerate admissible level.
Then we have the isomorphism
\begin{align*}
\W_{\ell}(\g)\cong H^0_{DS}(L_{\widehat{\g}}(\ell,0))
\end{align*}
of vertex algebras.
Moreover, $\W_{\ell}(\g)$ is self-dual, rational and $C_2$-cofinite.
\end{thm}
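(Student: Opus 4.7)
The plan is to combine three ingredients: vanishing of higher BRST cohomology on $L_{\widehat{\g}}(\ell,0)$, simplicity of the zeroth cohomology, and a transfer of rationality and finiteness properties through the quantized Drinfeld--Sokolov functor.

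First I would establish that for admissible $\ell$ one has $H^i_{DS}(L_{\widehat{\g}}(\ell,0))=0$ for all $i\ne 0$. The natural route is to build a resolution of $L_{\widehat{\g}}(\ell,0)$ by (generalized) Wakimoto modules, on each term of which the BRST cohomology concentrates in degree zero because of the free-field realization, and then run a spectral sequence to conclude concentration on $L_{\widehat{\g}}(\ell,0)$. Nondegeneracy of $\ell$ enters here: by the given equivalence
\[
H^{\bullet}_{DS}(L_{\widehat{\g}}(\ell,0))\ne 0 \iff \ell\text{ is non-degenerate},
\]
we know $H^0_{DS}(L_{\widehat{\g}}(\ell,0))\ne 0$, so the concentration gives an honest nonzero vertex algebra.

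Next, the quotient map $V_{\wg}(\ell,0)\twoheadrightarrow L_{\widehat{\g}}(\ell,0)$ induces a surjection $\W^{\ell}(\g)=H^0_{DS}(V_{\wg}(\ell,0))\twoheadrightarrow H^0_{DS}(L_{\widehat{\g}}(\ell,0))$. Since $\W_{\ell}(\g)$ is by definition the unique simple quotient of $\W^{\ell}(\g)$, the isomorphism in the theorem is equivalent to the simplicity of $H^0_{DS}(L_{\widehat{\g}}(\ell,0))$ as a vertex algebra. For this I would work inside the category of admissible $\widehat{\g}$-modules, which has strong semisimplicity properties in the sense of Kac--Wakimoto, and check that $H^0_{DS}$ sends admissible simples either to zero or to simple objects. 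Combined with a socle/cosocle argument applied to $L_{\widehat{\g}}(\ell,0)$ and the surjection above, this yields that the induced map has simple image, hence coincides with $\W_{\ell}(\g)$.

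Once the isomorphism is in place, $C_2$-cofiniteness of $\W_{\ell}(\g)$ follows from the computation of the associated variety: $X_{\W_{\ell}(\g)}$ is the intersection of $X_{L_{\widehat{\g}}(\ell,0)}$ with the Slodowy slice through a principal nilpotent, and by non-degeneracy $X_{L_{\widehat{\g}}(\ell,0)}=\mathcal{N}$, so the intersection is a single point. Rationality then follows by showing that every simple $\W_{\ell}(\g)$-module is of the form $H^0_{DS}(L_{\widehat{\g}}(\ell,\lambda))$ for an admissible weight $\lambda$, which yields finitely many simples and complete reducibility of the module category (again using the Kac--Wakimoto semisimplicity of admissible modules and the exactness of $H^0_{DS}$ on this subcategory). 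Self-duality is obtained from the invariant bilinear form on $\W^{\ell}(\g)$ constructed from the standard form on $V_{\wg}(\ell,0)$, descending through the simple quotient.

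The main obstacle will be the simplicity step: proving that $H^0_{DS}(L_{\widehat{\g}}(\ell,0))$ is already simple rather than admitting a further proper quotient. This is where the delicate interaction between the BRST differential, the maximal submodule structure of $V_{\wg}(\ell,0)$ at admissible levels, and the Wakimoto resolution must be controlled. Convergence and collapse of the relevant spectral sequence, together with the exactness of $H^0_{DS}$ on the admissible block, form the technical heart of the argument; once this is achieved, the remaining structural statements about $\W_{\ell}(\g)$ transfer from $L_{\widehat{\g}}(\ell,0)$ essentially formally.
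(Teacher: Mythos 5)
This theorem is not proved in the paper at all: it is imported wholesale from the cited references \cite{Arakawa,Ara10,A2012Dec}, so there is no in-paper argument to compare yours against. Your sketch does correctly reproduce the architecture of the proofs in those references --- cohomology vanishing and simplicity of $H^0_{DS}(L_{\widehat{\g}}(\ell,0))$ come from the Inventiones paper, $C_2$-cofiniteness via $X_{\W_\ell(\g)}=X_{L_{\widehat{\g}}(\ell,0)}\cap \mathcal{S}_f=\{f\}$ comes from \cite{Ara10}, and rationality from \cite{A2012Dec} --- so as a roadmap it is accurate.

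As a proof, however, it has genuine gaps at exactly the points you flag as ``technical heart'' and then pass over. First, the simplicity of $H^0_{DS}(L_{\widehat{\g}}(\ell,0))$: the category of $\widehat{\g}$-modules at an admissible level is \emph{not} semisimple (Kac--Wakimoto complete reducibility concerns only the admissible simples inside category $\mathcal{O}$, and even that is delicate), so ``strong semisimplicity properties'' plus a socle/cosocle argument is not an available mechanism; the actual proof establishes that the reduction functor sends every simple highest-weight module to a simple module or to zero, via a detailed analysis of the functor on blocks of category $\mathcal{O}$, and this is the main theorem of a long paper. Second, the vanishing $H^{i}_{DS}(L_{\widehat{\g}}(\ell,0))=0$ for $i\ne 0$ is not obtained by a Wakimoto resolution of the simple quotient (no such resolution is available off the shelf at admissible levels); it requires the Frenkel--Kac--Wakimoto vanishing conjecture, again a separate major theorem. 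Third, for rationality the classification of simples is the easy half; the hard half is complete reducibility of the module category of $\W_\ell(\g)$, which in \cite{A2012Dec} rests on a nontrivial analysis of Zhu's algebra and self-extensions and cannot be deduced ``essentially formally'' from exactness of $H^0_{DS}$. In short, your outline is a correct table of contents for three substantial papers, not a proof.
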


The rational and $C_2$-cofinite $W$-algebras
appearing in Theorem  \ref{Th:rationality-C2-for-BRST-W}
are called {\em minimal series $W$-algebras}.
In the case that 
$\g=\mf{sl}_2$,
 they are exactly the Virasoro vertex operator algebras
which belong to minimal series representations \cite{BPZ84}
 of the Virasoro algebra.
 
The conjectural classification  \cite{FKW} of simple modules 
for minimal series $W$-algebras
was also  established  in \cite{A2012Dec}.
In \cite{A-vEk} 
it was shown that
this together with 
Theorem \ref{Th:rationality-C2-for-BRST-W}
verifies the conjectual fusion rules
of minimal series $W$-algebras
obtained in \cite{FKW}.
In the next subsection we shall describe these
results more precisely 
in the cases that we are interested in this article.

\subsection{The case $\g=\mf{sl}_k$}

Now, we set $\g=\mf{sl}_k$. 
One knows \cite{FreBen04} that
$\W^{\ell}(\g)$ is freely generated by homogeneous
elements of weight $d_1+1,\dots, d_{\on{rank}\g}+1$,
where $d_1,\dots, d_{\on{rank}\g}$ 
is the exponents of $\g$.
In particular $\W^{\ell}(\mf{sl}_k)$
is freely generated by homogeneous elements of weight
$2,3,\dots,k$.
Hence
\begin{align*}
\on{ch}\W^{\ell}(\mathfrak{sl}_k)=\prod_{i=1}^{k-1}\prod_{j=1}^{\infty}(1-q^{i+j+1})^{-1}
=1+q^2+2q^3+\dots
\end{align*}

\begin{prop}[\cite{FL88}]\label{Pro:generator-of-W}
For any non-cricial level $\ell$,
$\W^{\ell}(\mf{sl}_k)$ is generated by its weight $2$ and weight $3$
subspaces as a vertex algebra.
 \end{prop}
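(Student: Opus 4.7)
The plan is to use the free generation of $\W^\ell(\mf{sl}_k)$ by homogeneous vectors $W_2, W_3, \dots, W_k$ of weights $2, 3, \dots, k$ (recalled just before the proposition) and to induct on $i$. Let $U \subseteq \W^\ell(\mf{sl}_k)$ denote the vertex subalgebra generated by $\W^\ell(\mf{sl}_k)_{(2)}$ and $\W^\ell(\mf{sl}_k)_{(3)}$; since $W_2, W_3 \in U$, it suffices to show that each $W_i$ for $4 \leq i \leq k$ also lies in $U$.

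Assume inductively that $W_2, \dots, W_{i-1} \in U$, and consider the element
\[
x_i := (W_3)_1 W_{i-1},
\]
which has conformal weight $3+(i-1)-1-1 = i$ and lies in $U$ by construction. By the freeness of the generators, every weight-$i$ element of $\W^\ell(\mf{sl}_k)$ has a unique PBW-type decomposition as a scalar multiple of $W_i$ plus a normally ordered polynomial in $W_2, \dots, W_{i-1}$ and their derivatives. Writing
\[
x_i = c_i W_i + P_i \qquad (c_i \in \C),
\]
one has $P_i \in U$ by induction, so the inductive step reduces to showing $c_i \ne 0$, whereupon $W_i = c_i^{-1}(x_i - P_i) \in U$.

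The main obstacle is precisely this nonvanishing. My approach is to pass to the Miura free-field realization $\W^\ell(\mf{sl}_k) \hookrightarrow \pi^\ell$, where $\pi^\ell$ is a Heisenberg vertex algebra of rank $k-1$, and in which the generators are identified (up to derivative corrections) with the elementary symmetric components of the Fateev--Lukyanov expansion
\[
\prod_{j=1}^{k}\bigl(\alpha_+ \partial + h_j(z)\bigr) = \sum_{i=0}^{k} W_i(z)(\alpha_+ \partial)^{k-i},
\]
with free bosons $h_1, \dots, h_k$ satisfying $\sum_j h_j = 0$. A single Wick contraction computes $x_i$ as the $(z-w)^{-2}$ coefficient of $W_3(z) W_{i-1}(w)$ and yields an explicit degree-$i$ symmetric polynomial in the $h_j$'s; extracting its $W_i$-component amounts to isolating the coefficient of the elementary symmetric function $e_i(h)$ modulo products of lower $e_j(h)$. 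The cleanest way to see that this coefficient is nonzero, uniformly over non-critical $\ell$, is to take the associated graded (classical) limit: there $\W^\ell(\mf{sl}_k)$ degenerates to the Adler--Gelfand--Dickey classical $\W$-algebra of $\mf{sl}_k$, whose second Poisson bracket $\{e_3(h), e_{i-1}(h)\}$ manifestly contains $e_i(h)$ with nonzero coefficient. This forces $c_i \ne 0$ for every non-critical $\ell$ and completes the induction.
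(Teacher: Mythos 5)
Your overall strategy is viable and, in its first half, parallels the paper's Appendix~\ref{appen}: both arguments reduce the question to the associated graded (Li filtration) Poisson vertex algebra $\gr \W^{\ell}(\mf{sl}_k)$ and both use the Miura embedding into a Heisenberg vertex algebra to control what happens there (the paper uses it to prove that the Poisson vertex algebra structure of $\gr\W^\ell(\g)$ is independent of the non-critical level, Proposition~\ref{Pro:independence-of-VPA-structure}). The endgame is genuinely different: the paper specializes to $\ell=1-k$, where $\W^{1-k}(\mf{gl}_k)$ is the Frenkel--Kac--Radul--Wang quotient of $\W_{1+\infty}$, and reads off $J^m_{1}J^n\equiv(m+n)J^{m+n-1}$ modulo $F^1$ from the explicit OPEs, so that $J^1,J^2$ generate; you instead stay at generic level, set up an induction via $x_i=(W_3)_1W_{i-1}=c_iW_i+P_i$ (the PBW bookkeeping here is correct, and $P_i\in U$ is fine since $U$ is closed under normally ordered products and derivatives), and propose to detect $c_i\neq 0$ by a symmetric-function computation in the free-boson picture. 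If completed, your route avoids the appeal to $\W_{1+\infty}$ entirely, which is a genuine simplification of the inputs.

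The gap is the word ``manifestly.'' The nonvanishing of the coefficient of $e_i(h)$ in the degree-$i$ symmetric polynomial produced by the single contraction --- namely $\sum_{a}\partial_{h_a}e_3\cdot\partial_{h_a}e_{i-1}=\sum_a e_2(h\setminus h_a)\,e_{i-2}(h\setminus h_a)$, modulo products of lower $e_j$'s --- is the entire content of the proposition, and it is not manifest: nothing in the Adler--Gelfand--Dickey formalism hands you this coefficient for free. It does work out: using $e_{m}(h\setminus h_a)=\sum_{s\ge0}(-1)^sh_a^s e_{m-s}(h)$ one finds that the only contribution not decomposable into lower $e_j$'s is $(-1)^ip_i$ (from $s=2$, $t=i-2$), and Newton's identity $p_i=(-1)^{i-1}ie_i+(\text{decomposables})$ gives the coefficient $-i\neq 0$; the trace correction from $\{h_a{}_\lambda h_b\}=(\delta_{ab}-1/k)\lambda$ only adds the decomposable term $e_2e_{i-2}$ and the derivative corrections in the Miura generators die in $F^1$, so $\sigma_0(W_i)=e_i(h)$ and the computation really does compute $c_i$. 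But as written your proof asserts the crux rather than proving it; you need to supply this (short) symmetric-function argument, or an equivalent one, for the proposal to stand as a proof.
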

We include    a proof of this fact
in Appendix \ref{appen}.

Note that  we have
\begin{align*}
\W^{\ell}(\mf{sl}_k)\cong \W^{\ell'}(\mf{sl}_k)\quad \text{if }(\ell+k)(\ell'+k)=1
\end{align*}
 (\cite{FL88}, see
also \cite{AM}), which is the special case of the 
Feigin-Frenkel duality \cite{FeiFre92}.

The level
$\ell$ is a non-degenerate admissible number 
for $\widehat{\g}=\widehat{\mf{sl}}_k$ if and only if
\begin{align*}
\ell+k=\frac{p}{q}\quad \text{with }
p,q\in \mathbb{N},\ (p,q)=1,\
p,q\geq k.
\end{align*}
Set
\begin{align*}
\W_{p,q}(\mf{sl}_k)=\W_{p/q-k}(\mf{sl}_k)=\W_{q/p-k}(\mf{sl}_k)
\end{align*}
for 
$p/q-k$ with
 $p,q\in \mathbb{N}$, $(p,q)=1$,
$p,q\geq k$.
The central charge of  $\W_{p,q}(\mf{sl}_k)$ is given by
\begin{align*}
c_{p,q}=-\frac{(k-1)((k+1)p-kq)(kp-(k+1)q)}{pq}.
\end{align*}
Note that
\begin{align}
c_{k+1,k+2} = \frac{2(k-1)}{k+2}.
\end{align}
The simple modules
of $\W_{p,q}(\mf{sl}_k)$
are parametrized by the set
\begin{align*}
I_{p,q}=(\hat P_+^{p-k}\times \hat P_+^{q-k})/\widetilde{W}_+,
\end{align*}
where
$\widehat{P}_+^{m}$ denotes the set of integral dominant weights 
of $\widehat{\g}$ of level $m$
and $\widetilde{W}_+$ is the subgroup of the extended affine Weyl group 
consisting of elements of length zero which acts diagonally 
on the set $\hat P_+^{p-k}\times \hat P_+^{q-k}$.
By \cite[Theorem 10.4]{A2012Dec} and \cite[Remark 9.1.8]{Arakawa},
$$\{H_{DS}^0(L_{\wg}(\ell,\lam-\frac{p}{q}\mu))\mid 
[(\hat{\lam}_{p-k},\hat{\mu}_{q-k})]\in I_{p,q},\ \lam,\mu\in \h^*\}$$
gives a complete set of representatives 
of the isomorphism classes of simple $\W_{p,q}(\mf{sl}_k)$-modules.

\subsection{Simple modules and fusion rules of $\W_{k+1,k+2}(\mf{sl}_k)$}
\label{subsec:W-pq}
Consider the special case that
$q=k+1$.
Then
we have a bijection
\begin{align*}
\hat P_+^{p-k}\isomap I_{p,k+1},\quad  \hat{\lam}_{p-k}\mapsto [(\hat{\lam}_{p-k},\hat{0}_{1})].
\end{align*}
Therefore, by putting
\begin{align*}
\M(\Lam)=H^0_{BRST}(L_{\widehat{\mathfrak{sl}}_k}(\ell,\bar \Lam)),
\end{align*}
where  $\bar \Lam$ is the restriction of $\Lam$ to $\mf{h}$,
the set 
$\{\M(\Lam)\mid
\Lam\in \hat{P}^{p-k}_+\}$ 
gives a complete set of representatives 
of the isomorphism classes of simple $\W_{p,k+1}(\mathfrak{sl}_k)$-modules.

Let
$\mc{A}^m_{\fing}=\{[L_{\widehat{\mf{sl}}_k(m,\lam)}]\mid
\lam\in {P}^m_+
\}$ be the fusion algebra 
for $\widehat{\mf{sl}}_k$ at level $m$,
and let
$\mc{A}^{p,q}_{\W}$ be the fusion algebra
of
$\W_{p,q}(\mf{sl}_k)$.
Note that $\W_{p,q}(\mf{sl}_k) = \W_{q,p}(\mf{sl}_k)$ and so 
$\mc{A}^{p,q}_\W\cong \mc{A}^{q,p}_{\W}$.

The following assertion is the special case of the 
fusion rule of $\W_{p,q}(\mf{sl}_k)$ computed in \cite{FKW}.
\begin{thm}
Let $p$ be an integer such that
$p\geq k$, $(p,k+1)=1$.
Then
the assignment
\begin{align*}
[L_{\widehat{\mf{sl}}_k}(\ell,  \lam)] \mapsto [\M(\hat{\lam}_{\ell})]
\end{align*}
gives the isomorphism of 
fusion algebras
$\mc{A}^{p-k}_{\fing}\isomap \mc{A}^{p,k+1}_{\W}$.
\end{thm}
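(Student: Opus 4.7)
The plan is to obtain this as the $q = k+1$ specialization of the general fusion rule for $\W_{p,q}(\mf{sl}_k)$ conjectured in \cite{FKW} and rigorously established in \cite{A-vEk} (building on Theorem \ref{Th:rationality-C2-for-BRST-W}). That general result identifies $\mc{A}^{p,q}_{\W}$ with the quotient of the tensor product $\mc{A}^{p-k}_{\g} \otimes \mc{A}^{q-k}_{\g}$ by the diagonal $\widetilde{W}_+$-action, with the simple $W$-module $\M(\Lam)$ corresponding on the cover to the pair $(\Lam, \widehat{0}_{q-k})$ for $\Lam \in \widehat{P}_+^{p-k}$.

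Specializing to $q = k+1$, we have $q - k = 1$. The set $\widehat{P}_+^1$ consists of the $k$ level-one fundamental dominant weights of $\widehat{\mf{sl}}_k$; the fusion algebra $\mc{A}^1_{\g}$ is the group algebra $\C[\Z/k\Z]$; and $\widetilde{W}_+ \cong \Z/k\Z$ acts freely and transitively on $\widehat{P}_+^1$ by cyclic permutation of the fundamental weights (Dynkin-diagram rotation). Consequently every $\widetilde{W}_+$-orbit in $\widehat{P}_+^{p-k} \times \widehat{P}_+^1$ contains a unique representative of the form $(\widehat{\lam}_{p-k}, \widehat{0}_1)$; this realizes the bijection $\widehat{P}_+^{p-k} \isomap I_{p,k+1}$ already recorded at the start of this subsection, and by the definition $\M(\Lam) = H^0_{BRST}(L_{\widehat{\mf{sl}}_k}(\ell, \bar\Lam))$ it is precisely the assignment $[L_{\widehat{\mf{sl}}_k}(\ell,\lam)] \mapsto [\M(\widehat{\lam}_\ell)]$ in the statement.

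It remains to verify multiplicativity. Working on the cover,
\begin{equation*}
[(\widehat{\lam}_{p-k}, \widehat{0}_1)] \cdot [(\widehat{\mu}_{p-k}, \widehat{0}_1)]
= \sum_{\nu} N_{\lam\mu}^{\nu} \, [(\widehat{\nu}_{p-k}, \widehat{0}_1)],
\end{equation*}
where $N_{\lam\mu}^{\nu}$ are the $\widehat{\mf{sl}}_k$-fusion coefficients at level $p-k$, since $\widehat{0}_1$ is the unit of $\mc{A}^1_{\g}$. Every summand already lies in the chosen orbit section, so descent to $\mc{A}^{p,k+1}_{\W}$ introduces no further identifications and the affine fusion coefficients transfer verbatim to the $W$-algebra side. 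Beyond the appeal to \cite{A-vEk}, which carries the essential VOA-theoretic content, the argument is a straightforward combinatorial specialization; the only real obstacle is the proof of that general fusion-rule result, which is already available in the literature.
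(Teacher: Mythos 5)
Your proposal matches the paper's own treatment: the paper offers no independent argument for this theorem, simply observing that it is the special case $q=k+1$ of the general fusion rule for $\W_{p,q}(\mf{sl}_k)$ conjectured in [FKW] and established in [A-vEk], exactly as you do. Your additional details — that $\widetilde{W}_+\cong\Z/k\Z$ acts simply transitively on $\hat P_+^1$, so each orbit has a unique representative $(\hat\lam_{p-k},\hat 0_1)$, and that $\hat 0_1$ is the unit of $\mc{A}^1_{\fing}$ so the level-$(p-k)$ fusion coefficients descend verbatim — are correct and merely make explicit the specialization the paper takes for granted.
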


Now we set $p=k+2$.
Then
$$\{\M(\Lam_i+\Lam_j )\mid 0\leq i \leq j\leq k-1\}$$
gives a complete set of representatives 
of the isomorphism classes of simple $\W_{k+2,k+1}(\mf{sl}_k)$-modules.
The top level of 
$\M(\Lam_i+\Lam_j)$ is 
one dimensional 
with weight given by 
\begin{align}
 \frac{-i^2+i (k (2 k+3)-2 j (k+1))+j (k-j)}{2 k (k+2)}.
 \label{eq:top-wt-Lambdaj-Lambdai}
\end{align}

\begin{cor}
$\M(\Lam_i+\Lam_j)$ is a simple current
module for $\W_{k+1,k+2}(\mf{sl}_k)$
if and only if $i=j$.
We have
\begin{equation}\label{eq:W-alg-fusion}
\M(2\Lambda_p) \times \M(\Lambda_j + \Lambda_i) = \M(\Lambda_{j+p},\Lambda_{i+p})
\end{equation}
for $0 \le p \le k-1$ and $0 \le i,j \le k-1$,
where the index 
is considered  to be modulo $k$.
 In particular, 
\begin{equation}\label{eq:W-alg-fusion2}
\M(2\Lambda_p) \times \M(2\Lambda_q) = \M(2\Lambda_{p+q}).
\end{equation}
\end{cor}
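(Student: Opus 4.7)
The plan is to transfer both assertions across the fusion algebra isomorphism provided by the theorem above. Taking $p=k+2$ there, and using $\W_{p,q}(\mf{sl}_k)=\W_{q,p}(\mf{sl}_k)$ (so that $\mc{A}^{k+1,k+2}_{\W}\cong\mc{A}^{k+2,k+1}_{\W}$), one obtains an isomorphism of fusion algebras
\[
\mc{A}^{2}_{\mf{sl}_k}\isomap \mc{A}^{k+1,k+2}_{\W},\qquad
[L_{\widehat{\mf{sl}}_k}(2,\bar{\Lam}_i+\bar{\Lam}_j)]\mapsto [\M(\Lam_i+\Lam_j)]
\]
for $0\leq i\leq j\leq k-1$, where $\bar{\Lam}_i$ denotes the restriction of the affine fundamental weight $\Lam_i$ to $\mf{h}$. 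Being a simple current is preserved by any isomorphism of fusion algebras, so both claims reduce to the corresponding statements in $\mc{A}^{2}_{\mf{sl}_k}$.

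For the first assertion I would invoke the classical fact that the simple currents of $\mc{A}^{m}_{\mf{sl}_k}$ form a cyclic group of order $k$ coming from the rotation of the affine Dynkin diagram, with representatives $\{[L_{\widehat{\mf{sl}}_k}(m,m\bar{\Lam}_p)]\}_{p=0}^{k-1}$. Specializing $m=2$ and transporting through the isomorphism above, the simple currents of $\mc{A}^{k+1,k+2}_{\W}$ are exactly the $[\M(2\Lam_p)]$, $0\leq p\leq k-1$, which is equivalent to $i=j$ in the parametrization $\Lam_i+\Lam_j$.

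For \eqref{eq:W-alg-fusion}, the next step is to record the simple-current-fusion identity
\[
[L_{\widehat{\mf{sl}}_k}(2,2\bar{\Lam}_p)]\cdot[L_{\widehat{\mf{sl}}_k}(2,\bar{\Lam}_i+\bar{\Lam}_j)]=[L_{\widehat{\mf{sl}}_k}(2,\bar{\Lam}_{i+p}+\bar{\Lam}_{j+p})]
\]
in $\mc{A}^{2}_{\mf{sl}_k}$ (indices mod $k$), which reflects that fusion with the simple current labelled by $2\bar{\Lam}_p$ implements the cyclic shift $\Lam_i\mapsto\Lam_{i+p\pmod k}$ on affine fundamental weights. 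Transporting this identity through the isomorphism above yields \eqref{eq:W-alg-fusion}, and setting $i=j=q$ produces the special case \eqref{eq:W-alg-fusion2}.

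The argument is essentially a dictionary translation, and the only genuinely external input is the classical description of the simple currents of $\widehat{\mf{sl}}_k$ at positive integer level together with their fusion action on integrable level-$m$ representations. I do not expect any real obstacle; the main work is just to cite a clean reference for the level-$2$ simple current structure on $\widehat{\mf{sl}}_k$.
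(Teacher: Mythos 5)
Your proposal is correct and follows exactly the route the paper intends: the corollary is stated there without proof as an immediate consequence of the preceding theorem identifying $\mc{A}^{2}_{\mf{sl}_k}$ with $\mc{A}^{k+2,k+1}_{\W}\cong\mc{A}^{k+1,k+2}_{\W}$, and you simply make explicit the transported input, namely that the simple currents of $\widehat{\mf{sl}}_k$ at level $2$ are the $L_{\widehat{\mf{sl}}_k}(2,2\bar\Lam_p)$ and act by the cyclic rotation $\Lam_i\mapsto\Lam_{i+p}$ on level-$2$ dominant weights. This is the same argument, just written out.
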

We remark that
$$\M(\Lam_i+\Lam_j)'\cong \M(\Lam_{-i}+\Lam_{-j}),$$
where $\M(\Lam_i + \Lam_j)'$ is the dual module of $\M(\Lam_i + \Lam_j)$ 
(see \cite[Theorem 5.5.4]{Arakawa}).

\medskip
Here we summarize some of the properties of 
$\M(2\Lambda_0) = \W_{k+1,k+2}(\mf{sl}_k)$.

(1) $\M(2\Lambda_0)$ is a simple, self-dual, rational and $C_2$-cofinite vertex operator algebra 
of CFT-type with central charge $2(k-1)/(k+2)$.

(2) $\ch \M(2\Lambda_0) = 1 + q^2 + 2q^3 + \cdots$.

(3) $\M(2\Lambda_0)$ is generated by $\M(2\Lambda_0)_{(2)}$ and $\M(2\Lambda_0)_{(3)}$.

(4) $\M(2\Lambda_j)$, $0 \le j \le k-1$ are simple current 
$\M(2\Lambda_0)$-modules and the fusion rules among them are 
$\M(2\Lambda_i) \times \M(2\Lambda_j) = \M(2\Lambda_{i+j})$.

(5) The top weight of $\M(2\Lambda_j)$ is $j(k-j)/k$.

(6) Any simple $\M(2\Lambda_0)$-module except $\M(2\Lambda_0)$ itself has positive top weight.

\begin{rmk}
The pair $(i,j)$ of the indices $i$ and $j$ for a complete set of representatives 
of the isomorphism classes of simple $M^0$-modules $M^{i,j}$ runs over the range 
$0 \le j < i \le k$, while that for $\M(2 \Lambda_0)$-modules 
$\M(\Lambda_j + \Lambda_i)$ runs over the range $0 \le j \le i \le k-1$. 
Let $i' = k-i+j$. Then $0 \le j \le i' \le k-1$ if and only if $0 \le j < i \le k$. 
Thus these two sets of parameters $(i,j)$'s are related as
\begin{equation*}
\{ (i,j) | 0 \le j \le i \le k-1 \} = \{ (k-i+j,j) | 0 \le j < i \le k \}.
\end{equation*}
\end{rmk}

The following lemma will be used in Section 6.

\begin{lem}\label{lem:top-weight}
The top weight of the simple 
$K(\mathfrak{sl}_2,k)$-module $M^{i,j}$ 
is equal to that of  the simple 
$\M(2\Lambda_0)$-module 
$\M(\Lambda_j + \Lambda_{j-i})$ for $0 \le i \le k$, $0 \le j \le k-1$.
\end{lem}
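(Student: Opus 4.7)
The plan is to establish the lemma by direct algebraic comparison of the two explicit top-weight formulas. First, I rewrite \eqref{eq:top-weight-Mij} in a form suited to the comparison: expanding and regrouping gives, for $0 \le j < i \le k$,
\[
h_{M^{i,j}} \;=\; \frac{i(k-i)}{2k(k+2)} + \frac{j(i-j)}{k}.
\]
For the wider range $0 \le i \le k$, $0 \le j \le k-1$ asserted in the statement, I will invoke the equivalence \eqref{eq:equiv-Mij}, namely $M^{i,j} \cong M^{k-i,\,j-i}$, to reduce all cases to the standard range.

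On the $W$-algebra side the indices of $\M(\Lambda_j+\Lambda_{j-i})$ are only well-defined modulo $k$, so I will lift them to representatives in $\{0,1,\dots,k-1\}$ and then order the two indices so that the formula \eqref{eq:top-wt-Lambdaj-Lambdai} applies (the formula is stated for $0 \le i_{\text{formula}} \le j_{\text{formula}} \le k-1$, and a quick check shows it is \emph{not} symmetric in its two arguments). I split into two cases.

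Case 1: $0 \le j < i \le k$. Here $j-i \equiv j-i+k \pmod k$ with $0 \le j-i+k \le k-1$ (and $=j$ when $i=k$); since $k-i \ge 0$, we have $j \le j-i+k$, so I apply \eqref{eq:top-wt-Lambdaj-Lambdai} with first index $j$ and second index $j-i+k$. After distributing $k(2k+3)-2(j-i+k)(k+1)=k+2(k+1)(i-j)$ and using $k-(j-i+k)=i-j$, the numerator collapses to $i(k-i)+2(k+2)j(i-j)$, giving exactly $h_{M^{i,j}}$.

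Case 2: $0 \le i \le j \le k-1$. Here $j-i \in \{0,\dots,k-1\}$ and $j-i \le j$, so I apply \eqref{eq:top-wt-Lambdaj-Lambdai} with first index $j-i$ and second index $j$. A parallel computation simplifies the numerator to $i(k-i)+2(k+2)(j-i)(k-j)$, which is precisely $2k(k+2)\,h_{M^{k-i,\,j-i}}$ in view of the rewritten formula above (applied at $(i',j')=(k-i,\,j-i)$, which lies in the standard range since $0 \le j-i < k-i \le k$). The equivalence \eqref{eq:equiv-Mij} then identifies this with $h_{M^{i,j}}$.

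The only real obstacle is the bookkeeping of the two orderings and the $\bmod\,k$ reduction of the indices; once those are pinned down, the proof is a routine but somewhat lengthy polynomial identity in $i$, $j$, $k$. Boundary cases ($i=0$, $i=j$, $i=k$) collapse to the relation $h_{\M(2\Lambda_j)}=j(k-j)/k$ that follows from setting the two arguments of \eqref{eq:top-wt-Lambdaj-Lambdai} equal, matching the top weight of $M^{k,j}\cong M^{0,j}$, so no separate argument is needed.
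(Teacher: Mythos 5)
Your proof is correct and follows essentially the same route as the paper's: a direct comparison of the two explicit top-weight formulas \eqref{eq:top-weight-Mij} and \eqref{eq:top-wt-Lambdaj-Lambdai} on the range $0 \le j < i \le k$, combined with the equivalence $M^{i,j} \cong M^{k-i,\,j-i}$ of \eqref{eq:equiv-Mij} to cover the remaining pairs. Your extra care with the mod-$k$ lifting and with ordering the two indices before applying \eqref{eq:top-wt-Lambdaj-Lambdai} (which is indeed not symmetric in its arguments) only makes explicit what the paper leaves implicit.
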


\begin{proof}
By \eqref{eq:top-weight-Mij} and \eqref{eq:top-wt-Lambdaj-Lambdai}
we see that the top weight of the simple $K(\mathfrak{sl}_2,k)$-module $M^{i,j}$ 
is equal to that of the simple 
$\M(2\Lambda_0)$-module 
$\M(\Lambda_j + \Lambda_{j-i})$ for $0 \le j < i \le k$. 
For a pair $(i,j)$ with $0 \le i \le j \le k-1$, we have $0 \le j-i < k-i \le k$. 
Hence the top weight of $M^{k-i,j-i}$ coincides with that of 
\begin{equation*}
\M(\Lambda_{j-i}+ \Lambda_{(j-i)-(k-i)}) = \M(\Lambda_{j-i} + \Lambda_{j-k}).
\end{equation*}

Since $M^{i,j}$ is isomorphic to $M^{k-i,j-i}$ as $M^0$-modules by \eqref{eq:equiv-Mij} and 
$\M(\Lambda_{j-i} + \Lambda_j) = \M(\Lambda_{j-k} + \Lambda_{j-i})$ 
the assertion holds for such a pair $(i,j)$ also.
\end{proof}

%

\section{Identification of $K(\mathfrak{sl}_2,k)$ and $\W_{k+1,k+2}(\mathfrak{sl}_k)$}
In this section we use the results of Section 4 to show that $K(\mathfrak{sl}_2,k)$ is isomorphic to 
the $(k+1,k+2)$-minimal series $W$-algebra $\W_{k+1,k+2}(\mathfrak{sl}_k)$.
We also discuss a correspondence of the simple modules for $K(\mathfrak{sl}_2,k)$ 
with those for $\W_{k+1,k+2}(\mathfrak{sl}_k)$. 
If $k=2$, it is well-known that both $K(\mathfrak{sl}_2,2)$ and $\W_{3,4}(\mathfrak{sl}_2)$ 
are isomorphic to the simple Virasoro vertex operator algebra with central charge $1/2$. 
So let us assume that $k\geq 3$.

By the properties of $\W_{k+1,k+2}(\mathfrak{sl}_k)$ described 
in Section \ref{subsec:W-pq}, we see that 
$\W_{k+1,k+2}(\mathfrak{sl}_k)$ satisfies the five conditions of 
Hypothesis \ref{hypothesis:2} for $N^0$ together with the simple current 
modules $\M(2\Lambda_j)$ for $N^j$, $0 \le j \le k-1$. 
Therefore, Theorem \ref{thm:characterize-paraf} implies 
the next theorem.

\begin{thm}\label{thm:identification}
The $(k+1,k+2)$-minimal series $W$-algebra $\W_{k+1,k+2}(\mathfrak{sl}_k)$ is isomorphic to 
the parafermion vertex operator algebra $K(\mathfrak{sl}_2,k)$ of type $\mathfrak{sl}_2$.
\end{thm}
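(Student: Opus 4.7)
The plan is to apply Theorem \ref{thm:characterize-paraf} with $N^0=\W_{k+1,k+2}(\mf{sl}_k)$ and $N^j=\M(2\Lambda_j)$ for $1\le j\le k-1$, so that the theorem reduces to verifying that this data satisfies the five conditions of Hypothesis \ref{hypothesis:2}. The case $k=2$ is disposed of at the outset by observing that both $K(\mf{sl}_2,2)$ and $\W_{3,4}(\mf{sl}_2)$ are the simple Virasoro vertex operator algebra of central charge $1/2$; so henceforth I assume $k\ge 3$.

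Conditions (1)--(3) are essentially bookkeeping from Section 5. Simplicity, self-duality, rationality and $C_2$-cofiniteness of $\W_{k+1,k+2}(\mf{sl}_k)$ are part of Theorem \ref{Th:rationality-C2-for-BRST-W}; the CFT-type property and the central charge $2(k-1)/(k+2)$ are immediate from the specialization of $c_{p,q}$ to $(p,q)=(k+1,k+2)$; and generation by $N^0_{(2)}$ and $N^0_{(3)}$ is Proposition \ref{Pro:generator-of-W}. The character condition $\ch N^0=1+q^2+2q^3+\cdots$ in (2) follows from the free generation of $\W^\ell(\mf{sl}_k)$ in weights $2,3,\ldots,k$, because no singular vector can appear in weight $\le 3$ in passing to the simple quotient (the weight $2$ piece is spanned by the conformal vector, and the weight $3$ piece by its $L(-1)$-descendant together with the primary spin-$3$ generator, neither of which can be null).

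Condition (4) is read off from Section \ref{subsec:W-pq}. By the corollary there, the simple currents of $\W_{k+1,k+2}(\mf{sl}_k)$ are precisely the modules $\M(2\Lambda_j)$, $0\le j\le k-1$, with fusion rules $\M(2\Lambda_i)\times \M(2\Lambda_j)=\M(2\Lambda_{i+j})$ by \eqref{eq:W-alg-fusion2}; and substituting $i=j$ into the top-weight formula \eqref{eq:top-wt-Lambdaj-Lambdai} gives $j(k-j)/k$ after a short simplification, matching the required top weight of $N^j$.

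The substantive step is condition (5), namely that every non-vacuum simple $\W_{k+1,k+2}(\mf{sl}_k)$-module has \emph{strictly positive} top weight. Rather than analyze the rational expression \eqref{eq:top-wt-Lambdaj-Lambdai} case-by-case over the parameter range $0\le i\le j\le k-1$, I would invoke Lemma \ref{lem:top-weight}, which is a purely numerical identity of two formulas and in particular does not presuppose the isomorphism we are trying to prove. It matches the top weight of each simple $\W_{k+1,k+2}(\mf{sl}_k)$-module $\M(\Lambda_j+\Lambda_{j-i})$ with that of the corresponding simple parafermion module $M^{i,j}$. The latter are non-negative and vanish only when $i=k$ and $j=0$, which on the $\W$-side corresponds to $\M(2\Lambda_0)=N^0$ itself. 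Thus every non-vacuum simple $N^0$-module has positive top weight, which completes the verification of Hypothesis \ref{hypothesis:2}, and Theorem \ref{thm:characterize-paraf} then yields the desired isomorphism $\W_{k+1,k+2}(\mf{sl}_k)\cong K(\mf{sl}_2,k)$.
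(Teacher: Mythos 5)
Your proposal is correct and follows essentially the same route as the paper: the paper's proof of this theorem consists precisely of observing that the properties of $\W_{k+1,k+2}(\mf{sl}_k)$ collected at the end of Section \ref{subsec:W-pq} verify the five conditions of Hypothesis \ref{hypothesis:2} with $N^j=\M(2\Lambda_j)$, and then invoking Theorem \ref{thm:characterize-paraf}. Your additional details (the specialization of \eqref{eq:top-wt-Lambdaj-Lambdai} at $i=j$, and the use of Lemma \ref{lem:top-weight} as a purely numerical identity to deduce positivity of top weights without circularity) are sound elaborations of steps the paper leaves implicit.
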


Furthermore, it follows from Theorem \ref{thm:Sec4} that
$\bigoplus_{j=0}^{k-1} V_{\Z\gm - j\gm/k} \ot \M(2\Lambda_j)$ is a $\Z_k$-graded 
simple current extension of $V_{\Z\gm} \otimes  \M(2\Lambda_0)$ 
and it is isomorphic to $L_{\hsl}(k,0)$. 
That is,
\begin{equation}\label{eq:Lk0-dec-W}
L_{\hsl}(k,0) = \bigoplus_{j=0}^{k-1} V_{\Z\gm - j\gm/k} \ot \M(2\Lambda_j)
\end{equation}
as $V_{\Z\gm} \otimes M(2\Lambda_0)$-modules. 

\begin{cor}
The parafermion vertex operator algebra 
$K(\mf{sl}_2,k)$ is rational.
\end{cor}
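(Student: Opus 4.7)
The plan is to deduce the corollary as an immediate consequence of the main identification and the known rationality of the minimal series $W$-algebras. For $k\geq 3$, Theorem \ref{thm:identification} provides an isomorphism $K(\mf{sl}_2,k)\cong \W_{k+1,k+2}(\mf{sl}_k)$ of vertex operator algebras, and rationality is an isomorphism invariant, so it suffices to cite a result guaranteeing rationality for the right-hand side.

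The rationality of $\W_{k+1,k+2}(\mf{sl}_k)$ is precisely part of Theorem \ref{Th:rationality-C2-for-BRST-W}, since the level $\ell$ corresponding to $(p,q)=(k+2,k+1)$ (equivalently $\ell+k=(k+2)/(k+1)$, with $\gcd(k+2,k+1)=1$ and both $\geq k$ for $k\geq 3$) is a non-degenerate admissible level for $\widehat{\mf{sl}}_k$. Thus Theorem \ref{Th:rationality-C2-for-BRST-W} yields that $\W_{k+1,k+2}(\mf{sl}_k)$ is rational (in fact also $C_2$-cofinite and self-dual), and transporting this along the isomorphism of Theorem \ref{thm:identification} gives the rationality of $K(\mf{sl}_2,k)$.

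It remains to address the excluded case $k=2$, which is handled separately at the opening of Section 6: both $K(\mf{sl}_2,2)$ and $\W_{3,4}(\mf{sl}_2)$ are isomorphic to the simple Virasoro vertex operator algebra $L(\tfrac12,0)$ of central charge $\tfrac12$, whose rationality is classical. The case $k=1$ is trivial since $K(\mf{sl}_2,1)=\C\mathbf{1}$.

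There is essentially no obstacle here: all the difficulty was concentrated in proving Theorem \ref{thm:identification} via the characterization in Theorem \ref{thm:characterize-paraf}, and the rationality of minimal series $W$-algebras is precisely the content of Theorem \ref{Th:rationality-C2-for-BRST-W} due to \cite{Arakawa,Ara10,A2012Dec}. One could also note as a remark that the same transport argument yields $C_2$-cofiniteness and self-duality of $K(\mf{sl}_2,k)$, although only rationality is claimed in the statement of the corollary.
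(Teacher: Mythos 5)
Your proposal is correct and matches the paper's (implicit) argument exactly: the corollary is stated as an immediate consequence of Theorem \ref{thm:identification} combined with the rationality of $\W_{k+1,k+2}(\mf{sl}_k)$ from Theorem \ref{Th:rationality-C2-for-BRST-W}. Your additional remarks on the small cases $k=1,2$ and on transporting $C_2$-cofiniteness and self-duality are harmless extras, not deviations.
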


It is known that
$K(\mf{sl}_2,k)$ is isomorphic to  
$\on{Com}_{L_{\widehat{\mf{sl}}_k}(1,0)\otimes L_{\widehat{\mf{sl}}_k(1,0)}}
(L_{\widehat{\mf{sl}}_k}(2,0))$
(\cite{Lam2}).
Therefore 
Theorem \ref{thm:identification}
immediately gives the following
assertion that has been conjectured in \cite{KacWak90,BS}.
\begin{cor}
We have the isomorphism
\begin{align*}
\W_{k+1,k+2}(\mf{sl}_k)\cong 
\on{Com}_{L_{\widehat{\mf{sl}}_k}(1,0)\otimes L_{\widehat{\mf{sl}}_k(1,0)}}
(L_{\widehat{\mf{sl}}_k}(2,0)).
\end{align*}
\end{cor}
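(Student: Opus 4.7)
The plan is essentially to chain two isomorphisms: the main result of the paper, Theorem \ref{thm:identification}, identifies $\W_{k+1,k+2}(\mf{sl}_k)$ with the parafermion vertex operator algebra $K(\mf{sl}_2,k)$, and the level-rank duality result of Lam (cited from \cite{Lam2}) identifies $K(\mf{sl}_2,k)$ with the coset $\on{Com}_{L_{\widehat{\mf{sl}}_k}(1,0)\otimes L_{\widehat{\mf{sl}}_k}(1,0)}(L_{\widehat{\mf{sl}}_k}(2,0))$. Composing these two isomorphisms produces the desired identification for all $k\geq 2$.

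So concretely, I would first invoke Theorem \ref{thm:identification} to write $\W_{k+1,k+2}(\mf{sl}_k)\cong K(\mf{sl}_2,k)$. Then I would cite \cite{Lam2} for the coset description of $K(\mf{sl}_2,k)$, which comes from the classical level-rank duality between $\widehat{\mf{sl}}_2$ at level $k$ and $\widehat{\mf{sl}}_k$ at level $2$ sitting inside a free-fermion or lattice construction. Finally, I would transitively combine the two isomorphisms.

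The $k=2$ case needs separate mention: both sides are known to be isomorphic to the simple Virasoro vertex operator algebra of central charge $1/2$, as already noted at the beginning of Section 6, so the statement holds there as well. For $k\geq 3$, Theorem \ref{thm:identification} applies directly.

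There is essentially no obstacle here since both isomorphisms being composed are already established (one in this paper, one in the cited reference). The only subtlety one might worry about is whether the isomorphism in \cite{Lam2} is stated as an isomorphism of vertex operator algebras (rather than merely as a graded vector space identification or as a statement about characters), but this is indeed a VOA isomorphism in the reference. Hence the proof reduces to a one-sentence composition.
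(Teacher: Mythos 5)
Your proposal matches the paper's own argument exactly: the corollary is obtained by composing Theorem \ref{thm:identification} ($\W_{k+1,k+2}(\mf{sl}_k)\cong K(\mf{sl}_2,k)$) with the level--rank duality isomorphism $K(\mf{sl}_2,k)\cong \on{Com}_{L_{\widehat{\mf{sl}}_k}(1,0)\otimes L_{\widehat{\mf{sl}}_k}(1,0)}(L_{\widehat{\mf{sl}}_k}(2,0))$ from \cite{Lam2}. Your extra remark on the $k=2$ case is consistent with the discussion at the start of Section 6 and does not change the argument.
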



For simplicity of notation we identify $M^0$ with $\M(2\Lambda_0)$, 
so that $M^0 = \M(2\Lambda_0)$. 
Then it follows from \eqref{eq:Lk0-dec}, \eqref{def:Mj} and \eqref{eq:Lk0-dec-W} that 
$M^j = \M(2\Lambda_j)$ for $0 \le j \le k-1$. 
This in particular implies that $M^j$, $0 \le j \le k-1$ are the simple current 
modules for $K(\mathfrak{sl}_2,k)$.  
The fusion rules among them are
\begin{equation}\label{eq:fusion-Mi-Mj}
M^i \times M^j = M^{i+j}
\end{equation}
by \eqref{eq:W-alg-fusion2}, which is compatible with \eqref{eq:VZgm-fusion}.


We have another description of $L_{\hsl}(k,0)$ as a $\Z_k$-graded 
simple current extension of $V_{\Z\gm} \otimes \M(2\Lambda_0)$. 
Indeed, let $N^j$ be as in Section 4. Then 
\begin{equation*}
N^{k-i} \times N^{k-j} = N^{k - (i+j)},
\end{equation*}
and so $N^{k-j}$, $0 \le j \le k-1$ satisfy the conditions of 
Hypothesis \ref{hypothesis:2} for $N^j$, $0 \le j \le k-1$. 
Therefore, we can apply the argument in Section 4 to $N^{k-j}$ in place of $N^j$ 
to conclude that $\bigoplus_{j=0}^{k-1} V_{\Z\gm - j\gm/k} \ot N^{k-j}$ 
is a simple current extension of $V^0$ isomorphic to $L_{\hsl}(k,0)$. 
Thus
\begin{equation}\label{eq:equivalence}
\begin{split}
\bigoplus_{j=0}^{k-1} V_{\Z\gm - j\gm/k} \ot N^j 
&\cong \bigoplus_{j=0}^{k-1} V_{\Z\gm - j\gm/k} \ot N^{k-j}\\
&= \bigoplus_{j=0}^{k-1} V_{\Z\gm + j\gm/k} \ot N^j
\end{split}
\end{equation}
as vertex operator algebras and we have
\begin{equation}\label{eq:Lk0-dec-W-2}
L_{\hsl}(k,0) = \bigoplus_{j=0}^{k-1} V_{\Z\gm - j\gm/k} \ot \M(2\Lambda_{k-j})
\end{equation}
as $V_{\Z\gm} \otimes \M(2\Lambda_0)$-modules. 
In \eqref{eq:Lk0-dec-W-2} we have $M^j = \M(2\Lambda_{k-j})$ for $0 \le j \le k-1$.

Recall the automorphism $\theta$ of the vertex operator algebra $L_{\hsl}(k,0)$ 
of order $2$ discussed in Section \ref{subsec:parafermion}. 
The automorphism $\theta$ transforms $V_{\Z\gm - j\gm/k}$ to $V_{\Z\gm + j\gm/k}$ 
and induces an automorphism of $V_{\Z\gm}$. 
Hence $\theta \otimes 1$ gives the isomorphism \eqref{eq:equivalence}. 
Note also that $1 \otimes \theta$ is an automorphism of $V_{\Z\gamma} \otimes M^0$ 
and that $\theta$ transforms $M^j$ to $M^j \circ \theta = M^{k-j}$ 
by \eqref{eq:theta-action}. 
Thus the isomorphism \eqref{eq:equivalence} is afforded by $1 \otimes \theta$ also
(cf. \cite[Lemmas 3.14 and 3.15]{SY}). 

The next proposition implies that there are only two ways of describing $L_{\hsl}(k,0)$ 
as a  $\Z_k$-graded simple current extension of $V_{\Z\gm} \otimes  \M(2\Lambda_0)$, 
namely \eqref{eq:Lk0-dec-W} and \eqref{eq:Lk0-dec-W-2}. 
\begin{prop}\label{prop:two-cases}
One of the following two cases occurs.

$(1)$ $M^j = \M(2\Lambda_j)$ for all $0 \le j \le k-1$.

$(2)$ $M^j = \M(2\Lambda_{k-j})$ for all $0 \le j \le k-1$.
\end{prop}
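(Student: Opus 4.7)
The plan is to reduce the proposition to determining a single group homomorphism $\sigma : \Z/k\Z \to \Z/k\Z$. Having identified $M^0 = K(\mf{sl}_2,k)$ with $\W_{k+1,k+2}(\mf{sl}_k) = \M(2\Lambda_0)$ via Theorem \ref{thm:identification}, each $M^j$ in the decomposition \eqref{eq:Lk0-dec} is a simple $\M(2\Lambda_0)$-module, and from the fusion relation \eqref{eq:fusion-Mi-Mj} it is a simple current. By property (4) of the list in Section \ref{subsec:W-pq}, the complete set of simple current $\M(2\Lambda_0)$-modules is $\{\M(2\Lambda_\ell) : 0 \le \ell \le k-1\}$. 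Therefore, there exists a function $\sigma : \{0,1,\dots,k-1\} \to \{0,1,\dots,k-1\}$, uniquely determined, such that $M^j \cong \M(2\Lambda_{\sigma(j)})$ as $M^0$-modules for every $j$.

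Next I would pin down $\sigma$ by combining two kinds of constraints. \emph{Top weights:} the top weight of $M^j$ is $j(k-j)/k$ by Hypothesis \ref{hypothesis:1}(4) (the same formula appears in Section \ref{subsec:parafermion}), while property (5) in Section \ref{subsec:W-pq} gives that $\M(2\Lambda_\ell)$ has top weight $\ell(k-\ell)/k$. Matching these forces
\[
j(k-j) = \sigma(j)(k-\sigma(j)) \pmod{k\Z_{\ge 0}}.
\]
Setting $\ell = \sigma(j)$ and solving the quadratic $j(k-j) = \ell(k-\ell)$ in $\Z$, one gets $(j-\ell)(k-(j+\ell)) = 0$, so $\sigma(j) \in \{j, k-j\}$ modulo $k$ for every $j$.

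\emph{Fusion rules:} the fusion rule $M^i \times M^j = M^{i+j}$ from \eqref{eq:fusion-Mi-Mj} together with $\M(2\Lambda_p)\times \M(2\Lambda_q) = \M(2\Lambda_{p+q})$ from \eqref{eq:W-alg-fusion2} forces
\[
\sigma(i+j) \equiv \sigma(i) + \sigma(j) \pmod{k}
\]
for all $i,j$, so $\sigma$ is a group endomorphism of $\Z/k\Z$. Combining with the previous step, $\sigma(1) \in \{1, k-1\}$, and then $\sigma(j) \equiv j\sigma(1) \pmod{k}$ determines $\sigma$ completely: either $\sigma$ is the identity, giving case (1), or $\sigma(j) = k-j$ for all $j$, giving case (2).

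There is no real obstacle here: both the simple-current classification and the fusion rules were established in Section \ref{subsec:W-pq}, the top-weight calculation for $\M(2\Lambda_\ell)$ is a direct specialization of \eqref{eq:top-wt-Lambdaj-Lambdai} with $i=j=\ell$, and the resulting elementary number-theoretic step (showing $j(k-j)=\ell(k-\ell)$ implies $\ell \equiv \pm j \pmod k$) is immediate. The only point requiring a little care is confirming that the bijection $M^j \mapsto \M(2\Lambda_{\sigma(j)})$ transports fusion products correctly, which is automatic since the isomorphisms are of $M^0 = \M(2\Lambda_0)$-modules and fusion is an invariant of the underlying module category.
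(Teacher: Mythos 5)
Your proof is correct and follows essentially the same route as the paper: the top weights $j(k-j)/k$ force $M^1\in\{\M(2\Lambda_1),\M(2\Lambda_{k-1})\}$, and the fusion rules \eqref{eq:fusion-Mi-Mj} and \eqref{eq:W-alg-fusion2} propagate that single choice to all $j$. Your packaging of the correspondence as a group endomorphism $\sigma$ of $\Z/k\Z$ with $\sigma(j)\in\{j,k-j\}$ is just a slightly more systematic version of the paper's argument, which only needs the top-weight comparison at $j=1$.
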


\begin{proof}
Among the simple current 
$K(\mathfrak{sl}_2,k)$-modules $M^j$, $1 \le j \le k-1$ 
(resp. $\W_{k+1,k+2}(\mf{sl}_k)$-modules 
$\M(2\Lambda_j)$, $1 \le j \le k-1$), 
only $M^1$ and $M^{k-1}$ (resp. $\M(2\Lambda_1)$ and $\M(2\Lambda_{k-1})$)  
have top weight $(k-1)/k$. 
Hence $M^1 = \M(2\Lambda_1)$ or  $M^1 = \M(2\Lambda_{k-1})$. 
By the fusion rules $M^p \times M^1 = M^{p+1}$ and 
$\M(2\Lambda_p) \times \M(2\Lambda_1) = \M(2\Lambda_{p+1})$, 
(1) holds if  $M^1 = \M(2\Lambda_1)$, and 
(2) holds if  $M^1 = \M(2\Lambda_{k-1})$.
\end{proof}

Next, we study the correspondence of the remaining simple modules for 
$K(\mathfrak{sl}_2,k)$ with those for 
$\W_{k+1,k+2}(\mf{sl}_k)$. 
For this purpose we first inspect $M^{i,j}$, $0 \le i \le k$, $0 \le j \le k-1$. 
Recall that the second index $j$ of $M^{i,j}$ is considered to be modulo $k$ 
(cf. Remark \ref{rmk:j-mod-k}). 
By Lemma \ref{lem:top-weight}, the top weight of 
$M^{i,j}$ is equal to that of $\M(\Lambda_j + \Lambda_{j-i})$ 
for $0 \le i \le k$, $0 \le j \le k-1$.

Since $M^j$ is a simple current $M^0$-module, we see from 
\eqref{eq:Lk0-dec}, \eqref{eq:Lki-dec} and the fusion rules \eqref{eq:VZgm-fusion} 
for simple $V_{\Z\gm}$-modules that the fusion rule of $M^p$ and  $M^{i,j}$ is 
\begin{equation}\label{eq:fusion-Mp-Mij}
M^p \times M^{i,j} = M^{i,j+p}, \quad 0 \le p, j \le k-1, \ 0 \le i \le k.
\end{equation}

Let 
\begin{equation}\label{def:Pij}
P(i,j) = k(i-2j) - (i-2j)^2 + 2k(i-j+1)j
\end{equation}
for $0 \le j \le i \le k$. Then the top weight of $M^{i,j}$ is $P(i,j)/2k(k+2)$. 
Since $M^{i,j} \cong M^{k-i,j-i}$ as $M^0$-modules by \eqref{eq:equiv-Mij}, 
the top weight of $M^{i,j}$ for $0 \le i \le j \le k-1$ 
is given by $P(k-i, j-i)/2k(k+2)$. 
We have $P(i,0) = P(i,i) = i(k-i)$ and
\begin{equation*}
P(i,j) - i(k-i) = 2(k+2)j(i-j) \ge 0
\end{equation*}
for $0 \le j \le i \le k$. Moreover,
\begin{equation*}
P(k-i, j-i) - i(k-i) = 2(k+2)(k-j)(j-i) > 0
\end{equation*}
for $0 \le i < j \le k-1$. Thus the following Lemma holds. 

\begin{lem}\label{lem:minimum-top-wt}
Let $1 \le i \le k$.  
Then the top weight of $M^{i,j}$ for $0 \le j \le k-1$ is 
at least $i(k-i)/2k(k+2)$ and it is equal to $i(k-i)/2k(k+2)$ if and only if 
$j = 0, i$. 
\end{lem}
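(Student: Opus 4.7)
The plan is to assemble the inequality out of the two identities for $P(\cdot,\cdot)$ that have already been established immediately before the statement, and then to split the range of $j$ into two intervals according to whether $j\leq i$ or $j>i$.

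Fix $i$ with $1\leq i\leq k$ and consider a simple $M^0$-module $M^{i,j}$ with $0\leq j\leq k-1$. There are two sub-cases to handle. First, for $0\leq j\leq \min(i,k-1)$ the pair $(i,j)$ lies in the range $0\leq j\leq i\leq k$, so the top weight of $M^{i,j}$ is $P(i,j)/2k(k+2)$. Using the already-recorded identity
\begin{equation*}
P(i,j)-i(k-i)=2(k+2)\,j(i-j),
\end{equation*}
the top weight equals $i(k-i)/2k(k+2)+j(i-j)/k$, hence is at least $i(k-i)/2k(k+2)$, with equality if and only if $j(i-j)=0$, i.e.\ $j=0$ or $j=i$.

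Second, if $i<j\leq k-1$ (an interval that is nonempty only when $i\leq k-2$), then by \eqref{eq:equiv-Mij} one has $M^{i,j}\cong M^{k-i,j-i}$ with $0\leq j-i\leq k-i-1<k-i\leq k$, so the top weight of $M^{i,j}$ equals $P(k-i,j-i)/2k(k+2)$. The second identity
\begin{equation*}
P(k-i,j-i)-i(k-i)=2(k+2)(k-j)(j-i)
\end{equation*}
has the right-hand side strictly positive, since $k-j\geq 1$ and $j-i\geq 1$ under the present range of $j$. Hence the top weight is strictly greater than $i(k-i)/2k(k+2)$ in this sub-case.

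Combining the two sub-cases, the minimum value of the top weight of $M^{i,j}$ over $0\leq j\leq k-1$ is $i(k-i)/2k(k+2)$, and it is attained exactly at $j=0$ or $j=i$ (with $j=i$ understood modulo $k$, so that when $i=k$ the only representative in $\{0,\dots,k-1\}$ is $j=0$). There is no real obstacle here: the identities do all the work, and the only point requiring any care is the bookkeeping at the boundary $i=k$, where the two solutions $j=0$ and $j=i$ collapse to a single one after reducing the index modulo $k$.
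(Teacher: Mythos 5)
Your proof is correct and follows essentially the same route as the paper: both split the range of $j$ into $j\le i$ and $i<j\le k-1$ and apply the two identities $P(i,j)-i(k-i)=2(k+2)j(i-j)$ and $P(k-i,j-i)-i(k-i)=2(k+2)(k-j)(j-i)$ recorded just before the lemma. The extra remark about the collapse of $j=0$ and $j=i$ at $i=k$ is a harmless (and accurate) piece of bookkeeping that the paper leaves implicit.
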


We also note that $i(k-i)$ is monotone increasing with respect to $i$ for 
$0 \le i \le k/2$.

We shall show that $M^{i,j} = \M(\Lambda_j + \Lambda_{j-i})$ for 
all $0 \le j < i \le k$ in the case (1) of Proposition \ref{prop:two-cases}. 
Thus, assume that $M^j = \M(2\Lambda_j)$ for all $0 \le j \le k-1$. 
We consider a decomposition of $L(k,i)$ into a direct sum of simple 
$V_{\Z\gm} \otimes \M(2\Lambda_0)$-modules and compare it with 
\eqref{eq:Lki-dec}.

The top weight of $V_{\Z\gm + (i-2j)\gm/2k} \otimes M^{i,j}$ is a sum of 
the top weight of $V_{\Z\gm + (i-2j)\gm/2k}$ and that of $M^{i,j}$. 
Hence we have 
\begin{equation}\label{eq:top-wt-difference}
(\text{top weight of } V_{\Z\gm + (i-2)\gm/2k} \otimes M^{i,i+1} ) 
- (\text{top weight of } V_{\Z\gm + i\gm/2k} \otimes M^{i,0} )  = \frac{k-2i}{k}
\end{equation}
for $0 \le i \le k$. 
In the range of $1 \le i \le k/2$, the difference $(k-2i)/k$ can be
an integer only if $k$ is even and $i = k/2$. 

Let 
\begin{equation}\label{def:S}
\begin{split}
\CS 
&= \{ M^{i,j} | 0 \le j < i \le k \}\\
&= \{ \M(\Lambda_j + \Lambda_i) | 0 \le j \le i \le k-1 \}
\end{split}
\end{equation}
be a complete set of representatives of the isomorphism classes of simple modules 
for $M^0 = \M(2\Lambda_0)$. 
Using the isomorphism $M^{i,j} \cong M^{k-i,j-i}$, we can arrange the representatives 
$M^{i,j}$'s so that 
\begin{equation}\label{eq:S-2-odd}
\CS = \{ M^{i,j} |  0 \le i \le (k-1)/2, 0 \le j \le k-1 \}
\end{equation}
if $k$ is odd, and
\begin{equation}\label{eq:S-2-even}
\CS = \{ M^{i,j} |  0 \le i \le k/2 - 1, 0 \le j \le k-1 \} 
\cup \{ M^{k/2, j} | 0 \le j \le k/2 - 1 \}
\end{equation}
if $k$ is even. In the case $k$ is even, we note that 
$M^{k/2, j} \cong M^{k/2, k/2 + j}$ for $ 0 \le j \le k/2 - 1$.

Set 
\begin{equation*}
\CT_i = \{ M^{i,j} |  0 \le j \le k-1 \}
\end{equation*}
for $0 \le i \le \lfloor (k-1)/2 \rfloor$, 
where $\lfloor (k-1)/2 \rfloor$ denotes the largest 
integer which does not exceed $(k-1)/2$. 

Moreover, set
\begin{equation*}
\CT_{k/2} = \{ M^{k/2,j} |  0 \le j \le k/2 - 1 \}
\end{equation*}
if $k$ is even. Set 
\begin{equation*}
\CS_p = \bigcup_{i=p}^{\lfloor k/2 \rfloor} \CT_i
\end{equation*}
for $0 \le p \le \lfloor k/2 \rfloor$. Then $\CS_0 = \CS$. 

We shall  establish an identification 
of $M^{i,j} \in \CT_i$ and $\M(\Lambda_j + \Lambda_{j-i}) \in \CT_i$, 
$0 \le j \le k-1$ ($0 \le j \le k/2 - 1$ if $k$ is even and $i = k/2$) 
for  $i = 1, 2, \ldots, \lfloor k/2 \rfloor$ inductively.
Note that $M^j = \M(2\Lambda_j)$, $0 \le j \le k-1$ by our assumption, 
that is, the identification is given for $i = 0$ and \eqref{eq:Lk0-dec-W} holds. 

First, we discuss the case $i = 1$. 
By Lemma \ref{lem:minimum-top-wt}, we see that the top weight of a simple 
module $M \in \CS_1$ is at least $(k-1)/2k(k+2)$ and it is equal to $(k-1)/2k(k+2)$ 
if and only if $M = M^{1,0}$ or $M^{1,1}$. Then Lemma \ref{lem:top-weight} implies that 
one of the following two cases occurs: 
$M^{1,0} = \M(\Lambda_0 + \Lambda_{k-1})$ and $M^{1,1} = \M(\Lambda_0 + \Lambda_1)$, or 
$M^{1,0} = \M(\Lambda_0 + \Lambda_1)$ and $M^{1,1} = \M(\Lambda_0 + \Lambda_{k-1})$.

Suppose $M^{1,0} = \M(\Lambda_0 + \Lambda_1)$. Then since the fusion rule
\begin{equation}\label{eq:M10-times-M11}
\big( V_{\Z\gm - \gm/k} \ot \M(2\Lambda_1) \big) \times 
\big( V_{\Z\gm + \gm/2k} \ot \M(\Lambda_0 + \Lambda_1) \big)
= V_{\Z\gm - \gm/2k} \ot \M(\Lambda_1 + \Lambda_2) 
\end{equation}
holds by \eqref{eq:VZgm-fusion} and \eqref{eq:W-alg-fusion}, both 
$V_{\Z\gm + \gm/2k} \ot \M(\Lambda_0 + \Lambda_1)$ and 
$V_{\Z\gm - \gm/2k} \ot \M(\Lambda_1 + \Lambda_2) $ appear 
as direct summands in a decomposition \eqref{eq:Lki-dec} 
of $L(k,1)$ into a direct sum of 
simple $V_{\Z\gm} \otimes \M(2\Lambda_0)$-modules. 
However, the top weigh of $\M(\Lambda_1 + \Lambda_2)$ coincides with 
that of $M^{1,2}$ by Lemma \ref{lem:top-weight}, and so the difference of the 
top weight of these two direct summands is $(k-2)/k$ by \eqref{eq:top-wt-difference}, 
which is not an integer. 
This is a contradiction, for $L(k,1)$ is a simple $L(k,0)$-module. 
Therefore, $M^{1,0} = \M(\Lambda_0 + \Lambda_{k-1})$. 
Then by the fusion rules \eqref{eq:W-alg-fusion} and \eqref{eq:fusion-Mp-Mij}, 
we obtain $M^{1,j} = \M(\Lambda_j + \Lambda_{j-1})$ for $0 \le j \le k-1$. 
Thus the identification of simple modules contained in $\CT_1$ holds.

Next, let $p$ be an integer such that $2 \le p \le \lfloor (k-1)/2 \rfloor$ and 
assume that the identification of simple modules contained in 
$\CT_i$, $0 \le i \le p-1$ holds. 

We replace $1$ with $p$ in the above argument. 
By Lemma \ref{lem:minimum-top-wt}, the top weight of a simple 
module $M \in \CS_p$ is at least $p(k-p)/2k(k+2)$ and it is equal to $p(k-p)/2k(k+2)$ 
if and only if $M = M^{p,0}$ or $M^{p,p}$. 
Hence one of the following two cases occurs: 
$M^{p,0} = \M(\Lambda_0 + \Lambda_{k-p})$ and $M^{p,p} = \M(\Lambda_0 + \Lambda_p)$, 
or 
$M^{p,0} = \M(\Lambda_0 + \Lambda_p)$ and $M^{p,p} = \M(\Lambda_0 + \Lambda_{k-p})$. 
Suppose $M^{p,0} = \M(\Lambda_0 + \Lambda_p)$. 
Then since 
\begin{equation}
\big( V_{\Z\gm - \gm/k} \ot \M(2\Lambda_1) \big) \times 
\big( V_{\Z\gm + p\gm/2k} \ot \M(\Lambda_0 + \Lambda_p) \big)
= V_{\Z\gm + (p-2)\gm/2k} \ot \M(\Lambda_1 + \Lambda_{p+1}) 
\end{equation}
by \eqref{eq:VZgm-fusion} and \eqref{eq:W-alg-fusion}, 
both $V_{\Z\gm + p\gm/2k} \ot \M(\Lambda_0 + \Lambda_p)$ and 
$V_{\Z\gm + (p-2)\gm/2k} \ot \M(\Lambda_1 + \Lambda_{p+1}) $ appear 
as direct summands in a decomposition of $L(k,p)$ into a direct sum of 
simple $V_{\Z\gm} \otimes \M(2\Lambda_0)$-modules. 
However, the top weigh of $\M(\Lambda_1 + \Lambda_{p+1})$ coincides with 
that of $M^{p,p+1}$, and so the difference of the 
top weight of these two direct summands is $(k-2p)/k$ by \eqref{eq:top-wt-difference}, 
which is not an integer. 
Thus $M^{p,0} = \M(\Lambda_0 + \Lambda_{k-p})$. 
Hence we have the identification of simple modules contained in  
$\CT_p$ by the fusion rules.

In the case $k$ is even and $p = k/2$, we have $\CS_{k/2} = \CT_{k/2}$. 
The minimum of the top weight of the simple modules contained in $\CT_{k/2}$ 
is $k/8(k+2)$ and it is attained only by $M^{2/k,0}$. 
Hence $M^{2/k,0} = \M(\Lambda_0 + \Lambda_{k/2})$ 
and the identification of simple modules contained in  
$\CT_{k/2}$ holds by the fusion rules. 
This completes the induction on $i$. 
Therefore, we conclude that $M^{i,j} = \M(\Lambda_j + \Lambda_{j-i})$ 
for all $0 \le j < i \le k$.

For $M^{i,j}$ with $0 \le i \le j \le k-1$, we use the isomorphism 
$M^{i,j} \cong M^{k-i, j-i}$ \eqref{eq:top-weight-Mij} of $M^0$-modules. 
Since $0 \le j-i < k-i \le k$, 
we apply the above identification to $M^{k-i, j-i}$. Then we have 
\begin{equation*}
M^{k-i,j-i} = \M(\Lambda_{j-i} + \Lambda_{(j-i) - (k-i)})=\M(\Lambda_j + \Lambda_{j-i}).
\end{equation*}
Thus the identification $M^{i,j} = \M(\Lambda_j + \Lambda_{j-i})$ holds 
for all $0 \le i \le k$, $0 \le j \le k-1$ in the case (1) of Proposition \ref{prop:two-cases}.

We have discussed the identification of simple modules for $K(\mathfrak{sl}_2,k)$ 
and those for $\W_{k+1,k+2}(\mf{sl}_k)$ 
in the case (1) of Proposition \ref{prop:two-cases} so far. 
As to the case (2) of Proposition \ref{prop:two-cases}, 
recall the permutation $M^{i,j} \mapsto M^{i,j} \circ \theta = M^{i,i-j}$ 
\eqref{eq:theta-action} on the simple 
modules induced by the automorphism $\theta$. 
In the case (1) of Proposition \ref{prop:two-cases}, $M^{i,i-j} = \M(\Lambda_{-j} + \Lambda_{i-j})$. 
Therefore  $M^{i,j} = \M(\Lambda_{-j} + \Lambda_{i-j})$ for all $i,j$ in 
the case (2) of Proposition \ref{prop:two-cases}. 
In fact, $M^p$ and $M^{i,0}$ are transformed to $M^{k-p}$ and $M^{i,i}$ by the 
permutation and the fusion rule \eqref{eq:fusion-Mp-Mij} is transformed to 
$M^{k-p} \times M^{i,i-j} = M^{i,i-(j+p)}$.

We have proved the following theorem.
\begin{thm}\label{thm:irred-mod-correspondence}
There are exactly two ways of identification of the simple modules for $K(\mathfrak{sl}_2,k)$ 
and those for $\W_{k+1,k+2}(\mathfrak{sl}_k)$, 
namely, 

$(1)$ $M^{i,j} = \M(\Lambda_j + \Lambda_{j-i})$ for all $0 \le i \le k$, $0 \le j \le k-1$.

$(2)$ $M^{i,j} = \M(\Lambda_{-j} + \Lambda_{i-j})$ for all $0 \le i \le k$, $0 \le j \le k-1$. 
\end{thm}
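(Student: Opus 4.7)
The plan is to start from Proposition \ref{prop:two-cases}, which already reduces the problem to two possible cases depending on whether the simple current $M^j$ is matched with $\M(2\Lambda_j)$ or with $\M(2\Lambda_{k-j})$. I will establish case (1) directly and deduce case (2) from it using the order-two automorphism $\theta$ of $L_{\hsl}(k,0)$, whose induced permutation on simple $M^0$-modules is recorded in \eqref{eq:theta-action}.

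Assume $M^j = \M(2\Lambda_j)$ for all $j$. My plan is to identify $M^{i,j}$ inductively on $i$ from $1$ to $\lfloor k/2 \rfloor$, processing the whole family $\CT_i = \{M^{i,j}\}_j$ at once. The key input is Lemma \ref{lem:minimum-top-wt}: within $\CT_i$ the minimum top weight is $i(k-i)/2k(k+2)$, attained only by $M^{i,0}$ and $M^{i,i}$. Combining with Lemma \ref{lem:top-weight}, the same minimum on the $\W$-algebra side narrows $M^{i,0}$ to either $\M(\Lambda_0 + \Lambda_{k-i})$ or $\M(\Lambda_0 + \Lambda_i)$. Once $M^{i,0}$ is pinned down, the fusion rule \eqref{eq:fusion-Mp-Mij} for simple currents, together with \eqref{eq:W-alg-fusion}, propagates the identification to all $M^{i,j} \in \CT_i$.

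The decisive step is ruling out the wrong alternative $M^{i,0} = \M(\Lambda_0 + \Lambda_i)$. If it held, fusion with $M^1 = \M(2\Lambda_1)$ would force $M^{i,1} = \M(\Lambda_1 + \Lambda_{i+1})$, whose top weight coincides (by Lemma \ref{lem:top-weight}) with that of $M^{i,i+1}$. Then both $V_{\Z\gm + i\gm/2k} \otimes M^{i,0}$ and $V_{\Z\gm + (i-2)\gm/2k} \otimes M^{i,i+1}$ would appear as $V_{\Z\gm}\otimes\M(2\Lambda_0)$-summands of \eqref{eq:Lki-dec} for $L(k,i)$, and \eqref{eq:top-wt-difference} gives their top-weight difference as $(k-2i)/k$. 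This is non-integer for $1 \le i < k/2$, contradicting the fact that $L(k,i)$ is a simple module for the $\Z$-graded vertex operator algebra $L_{\hsl}(k,0)$. The exceptional case $k$ even and $i = k/2$ is handled separately, since there $\CT_{k/2}$ has its minimum top weight attained by a unique element, leaving no ambiguity.

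Modules $M^{i,j}$ with $j \ge i$ are reduced to the range already treated via the equivalence $M^{i,j} \cong M^{k-i,j-i}$ of \eqref{eq:equiv-Mij}, and case (2) then follows from case (1) by applying $\theta$. The main obstacle will be locating the correct pair of summands of $L(k,i)$ to expose the non-integer weight shift in the contradiction step; once that pair is found, the integrality constraint coming from the $L_{\hsl}(k,0)$-module structure of $L(k,i)$ cleanly dictates the choice, and the fusion rules do the rest of the work.
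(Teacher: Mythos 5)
Your proposal is correct and follows essentially the same route as the paper's proof: reduction via Proposition \ref{prop:two-cases}, induction on $i$ using the minimal-top-weight elements $M^{i,0}$, $M^{i,i}$ of Lemma \ref{lem:minimum-top-wt} together with Lemma \ref{lem:top-weight}, elimination of the wrong alternative by the non-integral weight gap $(k-2i)/k$ in $L(k,i)$, the separate treatment of $i=k/2$, the reduction of $j\ge i$ via $M^{i,j}\cong M^{k-i,j-i}$, and the passage to case (2) by $\theta$. The only slight looseness is that the minimality argument must be applied over all not-yet-identified modules $\CS_i$ (using the monotonicity of $i(k-i)$), not just over $\CT_i$, but your inductive framing already supplies this.
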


The following corollary is already discussed in the proof of 
Theorem \ref{thm:irred-mod-correspondence}.

\begin{cor}\label{cor:theta-on-MLambdaij}
The automorphism $\theta$ of $K(\mathfrak{sl}_2,k) = \W_{k+1,k+2}(\mathfrak{sl}_k)$ induces a permutation 
$\M(\Lambda_j + \Lambda_i) \mapsto \M(\Lambda_{-j} + \Lambda_{-i})$ 
on the simple modules for all $i,j$.
\end{cor}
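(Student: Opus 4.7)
The plan is simply to combine two facts already established: the explicit permutation of the $M^{i,j}$'s induced by $\theta$, and the identification of these modules with the $\W$-algebra modules $\M(\Lambda_j + \Lambda_i)$.

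Concretely, equation \eqref{eq:theta-action} from Section \ref{subsec:parafermion} states that $M^{i,j} \mapsto M^{i,j}\circ\theta = M^{i,i-j}$ for $0 \le i \le k$ and $0 \le j \le k-1$. Now I apply Theorem \ref{thm:irred-mod-correspondence}. In case (1), the dictionary reads $M^{i,j} = \M(\Lambda_j + \Lambda_{j-i})$, so
\[
\M(\Lambda_j + \Lambda_{j-i}) \circ \theta \;=\; M^{i,i-j} \;=\; \M(\Lambda_{i-j} + \Lambda_{(i-j)-i}) \;=\; \M(\Lambda_{i-j} + \Lambda_{-j}).
\]
Setting $a = j$ and $b = j - i$ (so that $i - j = -b$, $-j = -a$) and using the symmetry $\M(\Lambda_a + \Lambda_b) = \M(\Lambda_b + \Lambda_a)$, this becomes
\[
\M(\Lambda_a + \Lambda_b) \circ \theta \;=\; \M(\Lambda_{-b} + \Lambda_{-a}) \;=\; \M(\Lambda_{-a} + \Lambda_{-b}),
\]
which is the desired formula. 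In case (2), the dictionary is $M^{i,j} = \M(\Lambda_{-j} + \Lambda_{i-j})$, and an analogous index chase produces the same permutation formula, so the corollary is independent of which of the two cases in Theorem \ref{thm:irred-mod-correspondence} actually holds.

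There is no real obstacle here; the content of the corollary is entirely an index-rewriting of \eqref{eq:theta-action} under the identification of Theorem \ref{thm:irred-mod-correspondence}. The only point that requires a bit of care is to verify that the reparametrization $(i,j) \leftrightarrow (a,b) = (j, j-i)$ (in case (1)) is a bijection between the index set $\{(i,j) : 0 \le i \le k,\ 0 \le j \le k-1\}$ and the pairs $(a,b)$ labeling the $\M(\Lambda_a+\Lambda_b)$'s modulo the symmetry $a \leftrightarrow b$ and modulo $k$, which is straightforward from the parametrization conventions recorded in the remark preceding Lemma \ref{lem:top-weight}.
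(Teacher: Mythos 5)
Your proposal is correct and follows essentially the same route as the paper: the authors note that the corollary "is already discussed in the proof of Theorem \ref{thm:irred-mod-correspondence}," where they perform exactly this combination of the permutation \eqref{eq:theta-action} with the dictionary $M^{i,j} = \M(\Lambda_j + \Lambda_{j-i})$ (resp.\ $\M(\Lambda_{-j}+\Lambda_{i-j})$) and the same index substitution. Your added check that the reparametrization $(i,j)\mapsto(j,j-i)$ is a bijection modulo the symmetry and modulo $k$ is a harmless extra verification, not a departure from the paper's argument.
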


\appendix
\section{Proof of Proposition \ref{Pro:generator-of-W} }\label{appen}
For a $\Z_{\ge 0}$-graded vertex operator algebra
$V$,
let
$F^p V$ be the subspace
of $V$ spanned by the vectors
$$a^1_{-n_1-1}\cdots a^r_{-n_r-1}b$$
with $a_i\in V$,
$b\in V$,
$n_i\in\Z_{\geq 0}$,
$n_1+\cdots+n_r\geq p$.
We have \cite{Li05}
\begin{align*}
&V=F^0 V\supset F^1 V\supset \cdots,\quad
T F^p V\subset F^{p+1}V,
\quad \bigcap F^p V=0,\\
&a_{n}F^p V\subset F^{p+q-n-1}V
\text{ for $a\in  F^pV$, $n\in \Z$},
\\
&a_{n}F^p V\subset F^{p+q-n}V\quad
\text{for $a\in  F^pV$, $n\geq 0$.}
\end{align*}
Set
$\gr V=\bigoplus_{p=0}^{\infty} F^p V/F^{p+1}V$.
Let
$\sigma_p: F^p V\rightarrow F^pV/F^{p+1}V$
be  the symbol map.
This induces an linear isomorphism
$ V\isomap \gr V$.

\begin{prop}\label{Pro:VPA}
Let $r$ be a non-negative integer such that
\begin{align}
a_{n}F^p V\subset F^{p+q-n+r}V\quad\text{for all $a\in  F^pV$, $n\geq 0$.}
\label{eq:condition}
\end{align}
Then 
$\gr V$ is a  Poisson  vertex algebra by
\begin{align}
 \sigma_p(a)\sigma_q(b)=\sigma_{p+q}(a_{-1}b) ,
\quad T\sigma_p(a)=\sigma_{p+1}(a),
\nonumber
\\
\sigma_p(a)_{(n)}\sigma_q(b)=\sigma_{p+q-n+r}(a_{n}b)\quad \text{for
 }n\geq 0.\label{eq:vp-product}
\end{align}
 \end{prop}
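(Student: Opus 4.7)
The plan is to view Proposition \ref{Pro:VPA} as a parameter-$r$ refinement of Li's construction (\cite{Li05}) of the associated graded Poisson vertex algebra attached to the standard filtration, which is the case $r=0$. The extra hypothesis \eqref{eq:condition} is exactly what is needed so that the shifted $(n)$-product formula in \eqref{eq:vp-product} lands in the correct graded piece $F^{p+q-n+r}V/F^{p+q-n+r+1}V$, and the proof for general $r\ge 0$ follows Li's argument with the shift absorbed uniformly into the filtration indices.

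For well-definedness of the operations, the filtration identity $a_n F^pV \subset F^{p+q-n-1}V$ for $a\in F^qV$, specialized to $n=-1$, gives $a_{-1}b\in F^{p+q}V$, and replacing $a$ by an element of $F^{q+1}V$ or $b$ by an element of $F^{p+1}V$ produces an element of $F^{p+q+1}V$, so the commutative product is well-defined. The derivation $T$ descends because $TF^pV\subset F^{p+1}V$. For the $(n)$-product with $n\geq 0$, hypothesis \eqref{eq:condition} yields $a_n b\in F^{p+q-n+r}V$ when $a\in F^q V$ and $b\in F^pV$, and shifting either of $a$ or $b$ one step deeper moves $a_n b$ one step deeper as well.

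All Poisson vertex algebra axioms reduce to the Borcherds/Jacobi identity in $V$, with correction terms lying in strictly deeper filtration and hence vanishing modulo the next filtration level. For commutativity of the $-1$ product, the skew-symmetry $Y(a,z)b = e^{zT}Y(b,-z)a$ gives
\begin{equation*}
a_{-1}b - b_{-1}a \;=\; \sum_{k\geq 1}\frac{(-1)^k}{k!}\,T^k(b_{k-1}a),
\end{equation*}
and for each $k\geq 1$, hypothesis \eqref{eq:condition} places $b_{k-1}a$ in $F^{p+q-k+1+r}V$, so $T^k(b_{k-1}a)\in F^{p+q+1+r}V\subset F^{p+q+1}V$. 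Associativity of the $-1$ product follows from the Borcherds identity
\begin{equation*}
(a_{-1}b)_{-1}c \;=\; \sum_{j\geq 0}\bigl[a_{-1-j}(b_{j-1}c)+b_{-2-j}(a_j c)\bigr],
\end{equation*}
from which only the $j=0$ term $a_{-1}(b_{-1}c)$ survives modulo $F^{p+q+s+1}V$, since every remaining summand involves at least one $(n)$-product with $n\geq 0$ and a filtration-index count using \eqref{eq:condition} pushes it into $F^{p+q+s+1+r}V$. The same template handles sesquilinearity (from $[T,Y(a,z)]=\partial_z Y(a,z)$), skew-symmetry of the $\lambda$-bracket, the Jacobi identity, and the left Leibniz rule.

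The main bookkeeping difficulty is the Jacobi identity for the $\lambda$-bracket, where two nested $(n)$-products with $n\geq 0$ contribute an accumulated shift of $2r$ in the filtration index; however, since this extra $r$ only pushes the correction terms deeper than required, hypothesis \eqref{eq:condition} — the bare minimum for a single bracket to be well-defined — already suffices for the full Poisson vertex algebra structure. The proof is thus a uniform shift by $r$ of Li's original $r=0$ case.
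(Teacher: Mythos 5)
Your proof is correct and takes essentially the same route as the paper, whose entire argument is the observation that Li's proof of the $r=0$ case carries over verbatim to $r>0$; you have merely supplied the filtration bookkeeping (well-definedness, skew-symmetry, and associativity modulo deeper filtration pieces) that the paper leaves implicit. The only cosmetic imprecision is your remark about ``correction terms'' in the Jacobi identity: for $m,n\geq 0$ the commutator formula has no extra terms at all since $\binom{m}{j}=0$ for $j>m$, so all three sides land exactly in filtration degree $p+q+s-m-n+2r$ and the identity holds on the nose.
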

 \begin{proof}The assertion was proved by Li \cite{Li05} for $r=0$.
The same proof applies to  the cases that  $r>0$ as well.
 \end{proof}

Note that in Proposition \ref{Pro:VPA}
we can always take $r=0$,
and this Poisson vertex  algebra
structure of $\gr V$ is trivial  if and only if
$a_{n}F^q \W^\ell(\fing)\subset F^{p+q-n+1}\W^\ell(\fing)$
for all $a \in F^p\W^\ell(\fing)$, $n \geq 0$.
Therefore, if this is the case we can give $\gr V$ a 
Poisson 
vertex  algebra
structure using Proposition \ref{Pro:VPA} for $r=1$.

\begin{exm}\label{eq:Heisenberg} 
The Poisson  vertex algebra $\gr V_{\wg}(\ell,0)$ for $r=0$
in Proposition \ref{Pro:VPA}
is isomorphic
to $\C[J_{\infty}\fing^*]$
equipped with the level $0$ Poisson vertex algebra
structure induced from the 
Kirillov-Kostant  Poisson structure of $\fing^*$
(\cite{Ara12}.)
Here $J_{\infty}X$ denotes the arc space
of a scheme $X$.
Let  $M_{\wh}(\ell,0) \subset V_{\wg}(\ell,0)$ be the Heisenberg vertex subalgebra
generated by $h(-1)\mathbf{1}$ with $h\in \finh$.
Then $\gr M_{\wh}(\ell,0) \cong \C[J_{\infty}\h^*]$.
Here,  again, $\C[J_{\infty}\h^*]$
equipped with the level $0$ Poisson vertex algebra
structure induced from the
Kirillov-Kostant  Poisson structure of $\finh^*$,
which is trivial.
Therefore, \eqref{eq:condition} holds for $r=1$.
Hence \eqref{eq:vp-product}
for $r=1$ gives the Poisson vertex  algebra
structure on $\gr M_{\wh}(\ell,0)$.
We have
\begin{align*}
h_{n}h'=\begin{cases}
	    \ell (h|h') &\text{for }n=1\\
0&\text{for }n=0\text{ or }n\geq 2,
	   \end{cases}
\end{align*}
for $h,h'\in \finh\subset \C[\finh^*]\subset \C[J_{\infty}\finh^*]$.
In particular the Poisson vertex algebra structure of 
$\gr M_{\wh}(\ell,0)$ does not depend on the level  $\ell$ provided that $\ell \ne 0$.
 \end{exm}

Consider the $W$-algebra $\W^{\ell}(\g)$.
The vertex Poisson  algebra
structure of $\gr \W^{\ell}(\fing)$ is trivial 
if we take $r$ to be $0$
in Proposition \ref{Pro:VPA} (\cite{Ara10}).
Therefore, by Proposition \ref{Pro:VPA}
$\gr \W^{\ell}(\fing)$ is the vertex Poisson algebra by 
$\sigma_p(a)_{(n)}\sigma_q(b)=\sigma_{p+q-n+1}(a_{n}b)$,
$n\geq 0$.
Below we regard 
$\gr \W^{\ell}(\fing)$ 
as a vertex Poisson algebra
with respect to this product.

In order to prove Proposition \ref{Pro:generator-of-W}
it is sufficient to show the following propositions.

\begin{prop}\label{Pro:generator-of-W-Poisson}
Let $\ell$ be non-critical.
Then  the vertex Poisson algebra
$\gr \W^{\ell}(\mf{sl}_k)$ is  generated by the 
weight $2$ subspace
and the weight $3$ subspace.
\end{prop}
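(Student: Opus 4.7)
The plan is to exploit the Miura free-field realization of $\gr\W^{\ell}(\mf{sl}_k)$ and to establish the generation claim there by an inductive Poisson-bracket computation. At any non-critical level, the classical Miura homomorphism furnishes an injective vertex Poisson algebra map
$$\gr\W^{\ell}(\mf{sl}_k)\;\hookrightarrow\;\gr M_{\wh}(\ell',0)\;\cong\;\C[J_{\infty}\h^*]$$
for a related level $\ell'$, the right-hand side carrying the Poisson vertex algebra structure recalled in Example \ref{eq:Heisenberg}. Writing $\phi_1,\dots,\phi_k\in\h$ for the weights of the standard representation of $\mf{sl}_k$ (so $\phi_1+\cdots+\phi_k=0$), the weight-$i$ generator $W_i$ of $\gr\W^{\ell}(\mf{sl}_k)$ is identified with the coefficient of $\partial^{k-i}$ in the formal differential operator $(\partial+\phi_1)(\partial+\phi_2)\cdots(\partial+\phi_k)$; the image is thus the differential subalgebra of Weyl-invariant polynomials in the $\phi_p$'s.

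With this setup in hand, I would prove by induction on $i$ that, for $4\leq i\leq k$, the generator $W_i$ lies in the vertex Poisson subalgebra $\mc{U}\subseteq\gr\W^{\ell}(\mf{sl}_k)$ generated by its weight $2$ and weight $3$ subspaces; the base case $i\in\{2,3\}$ is immediate. For the inductive step, the key calculation is to evaluate a Poisson bracket $(W_3)_{(n)}W_{i-1}$ for a suitable $n\geq 0$, starting from the basic free-boson brackets $(\phi_p)_{(1)}\phi_q=\ell'(h_p|h_q)\1$ (all other non-negative $(m)$-products vanishing) and propagating them via the Leibniz rule in a vertex Poisson algebra together with the compatibility between $T$ and the $(m)$-products. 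The expected outcome is that $(W_3)_{(n)}W_{i-1}$ equals a nonzero scalar multiple of $W_i$ plus a differential polynomial in $W_2,\dots,W_{i-1}$, which belongs to $\mc{U}$ by the inductive hypothesis, and hence $W_i\in\mc{U}$. Once each $W_i$ lies in $\mc{U}$, the proposition follows because $\gr\W^{\ell}(\mf{sl}_k)$ is freely generated as a differential algebra by $W_2,\dots,W_k$ and the derivation $T$ is already part of the vertex Poisson structure of $\mc{U}$.

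The main obstacle is the explicit verification of the inductive step: expanding $(W_3)_{(n)}W_{i-1}$ in the free-boson realization yields an intricate sum of differential monomials in $\phi_1,\dots,\phi_k$, and isolating the Weyl-symmetric combination that represents $W_i$ with a nonzero coefficient requires some combinatorial care. A more uniform route is to reinterpret the Miura map via the classical Drinfeld--Sokolov reduction, identifying $\gr\W^{\ell}(\mf{sl}_k)$ with the second Adler--Gelfand--Dickey Poisson structure on the space of monic scalar differential operators of order $k$; in that formulation the Poisson bracket with $W_3$ acts uniformly on the higher coefficients, and the appearance of $W_i$ in $(W_3)_{(n)}W_{i-1}$ becomes essentially automatic.
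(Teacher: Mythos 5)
Your overall strategy---reduce to a single convenient realization via the Miura map, then induct on the weight by bracketing with the weight-$3$ generator---is reasonable in outline, and your implicit use of the fact that the vertex Poisson structure on $\gr \W^{\ell}(\mf{sl}_k)$ does not depend on the non-critical level $\ell$ matches Proposition \ref{Pro:independence-of-VPA-structure}. But the proof has a genuine gap at exactly the point that carries all the content of the proposition: you never verify that $(W_3)_{(n)}W_{i-1}$ contains $W_i$ with a \emph{nonzero} coefficient modulo differential polynomials in $W_2,\dots,W_{i-1}$. You acknowledge this yourself (``the main obstacle is the explicit verification of the inductive step''), and the fallback claim that in the Adler--Gelfand--Dickey formulation the appearance of $W_i$ ``becomes essentially automatic'' is an assertion, not an argument; computing the second Gelfand--Dickey bracket of $W_3$ against $W_{i-1}$ and isolating the coefficient of $W_i$ is precisely the computation that Fateev--Lukyanov stated without proof, i.e.\ it is the statement being proved. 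As written, the proposal is a plan for a proof rather than a proof.

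For contrast, the paper sidesteps this combinatorial difficulty entirely: after establishing level-independence (Proposition \ref{Pro:independence-of-VPA-structure}, via the Miura map exactly as in your setup), it specializes to $\ell=1-k$, where by Frenkel--Kac--Radul--Wang $\W^{1-k}(\mf{gl}_k)$ is the simple quotient of $\W_{1+\infty}$ at central charge $k$. In that presentation the generators $J^m$ have completely explicit OPEs, and modulo $F^1$ one reads off $J^2_{1}J^{n}\equiv (2+n)J^{n+1}$, so bracketing with the weight-$3$ field $J^2$ raises the index by one with the manifestly nonzero coefficient $2+n$. This one-line identity is the inductive step you were unable to supply. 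If you want to salvage your approach, you would need either to carry out the symmetric-function computation in the free-boson picture for general $k$ and $i$, or to import a precise statement of the second Gelfand--Dickey bracket $\{W_3,W_{i-1}\}$ from the integrable-systems literature with the relevant coefficient identified and shown to be nonzero.
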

 
\begin{prop}\label{Pro:independence-of-VPA-structure}
For a non-critical $\ell$, the vertex Poisson algebra structure of 
$\gr \W^{\ell}(\g)$
is independent of $\ell \in \C\backslash \{-h^{\vee}\}$.
\end{prop}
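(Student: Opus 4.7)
The strategy is to identify $\gr \W^{\ell}(\g)$, equipped with the $r=1$ Poisson vertex algebra structure of Proposition \ref{Pro:VPA}, with the classical $W$-algebra $\C[J_{\infty}\CS_{\mrprin}]$ --- the coordinate ring of the arc space of the principal Slodowy slice, endowed with its classical geometric PVA structure --- which is manifestly independent of $\ell$.

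First I would invoke the identification $\gr V_{\wg}(\ell, 0) \cong \C[J_{\infty}\g^*]$ of \cite{Ara12}, where the right-hand side carries the level-zero PVA structure induced from the Kirillov--Kostant bracket on $\g^*$; this is already $\ell$-independent. Since the BRST differential $d_{DS}$ of the quantized Drinfeld--Sokolov reduction respects the Li filtration on the BRST complex $C^{\bullet}_{DS}(V_{\wg}(\ell,0))$, one obtains an induced filtration on the cohomology $\W^{\ell}(\g) = H^0_{DS}(V_{\wg}(\ell,0))$ and a canonical edge map
\begin{equation*}
\gr \W^{\ell}(\g) \longrightarrow H^{0}\bigl(\gr C^{\bullet}_{DS}(V_{\wg}(\ell,0))\bigr).
\end{equation*}
The right-hand side is the cohomology of the \emph{classical} Drinfeld--Sokolov BRST complex, which computes the Hamiltonian reduction of the Poisson arc scheme $J_{\infty}\g^*$ by the pro-unipotent arc group $J_{\infty}N$ at the character defined by the principal nilpotent. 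By the standard vanishing theorem for the classical reduction, this cohomology is concentrated in degree $0$ and equals $\C[J_{\infty}\CS_{\mrprin}]$ with its classical PVA structure, visibly independent of $\ell$.

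To upgrade the edge map to an isomorphism, I would compare characters: by the Feigin--Frenkel free generation theorem
\begin{equation*}
\ch \W^{\ell}(\g) = \prod_{i=1}^{\on{rank}\g} \prod_{j \geq 0}(1 - q^{d_i + 1 + j})^{-1},
\end{equation*}
which coincides with $\ch \C[J_{\infty}\CS_{\mrprin}]$. Together with the degree-$0$ concentration noted above, this forces the spectral sequence of the Li-filtered BRST complex to degenerate and identifies $\gr \W^{\ell}(\g)$ with $\C[J_{\infty}\CS_{\mrprin}]$ as Poisson vertex algebras, whence the proposition.

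The main obstacle is the careful verification that the Li filtration interacts correctly with $d_{DS}$ so that the induced differential on $\gr C^{\bullet}_{DS}(V_{\wg}(\ell,0))$ is exactly the classical DS BRST differential, built only from the Kirillov--Kostant bracket and the fixed principal nilpotent $f$ (hence $\ell$-free), and that the $r=1$ shift in the PVA bracket of $\gr \W^{\ell}(\g)$ matches the natural Poisson vertex bracket on $\C[J_{\infty}\CS_{\mrprin}]$ coming from the classical reduction. This is essentially bookkeeping of how the ghost grading interacts with the Li grading, together with a weight-counting argument that rules out higher differentials in the spectral sequence.
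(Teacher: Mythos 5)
Your route (degenerate the Drinfeld--Sokolov BRST complex along the Li filtration and identify $\gr \W^{\ell}(\g)$ with $\C[J_{\infty}\CS_{\mrprin}]$) is genuinely different from the paper's, which instead passes through the Miura map $\W^{\ell}(\g)\hookrightarrow M_{\wh}(\ell+h^{\vee},0)$: the induced injection $\gr\W^{\ell}(\g)\hookrightarrow \gr M_{\wh}(\ell+h^{\vee},0)$ has image the symmetric polynomials in $\C[J_{\infty}\h^*]$, independent of $\ell$, and the ambient $r=1$ Heisenberg vertex Poisson structure of Example \ref{eq:Heisenberg} is independent of the level up to rescaling. However, your argument has a genuine gap at precisely the point you dismiss as bookkeeping. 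The Poisson vertex structure that the associated graded of a Li-filtered complex naturally transports is the $r=0$ one; on $\gr V_{\wg}(\ell,0)$ this is the \emph{level-zero} Kirillov--Kostant structure (the central term $x_1y=\ell(x|y)\1$ is invisible at $r=0$), and on $\gr\W^{\ell}(\g)$ the $r=0$ structure is \emph{trivial}. The proposition concerns the $r=1$ structure, which is a secondary operation reading off the $F^1$-components of the nonnegative products of the generators. This is exactly the level at which the dependence on $\ell$ could enter: in the paper's own Example \ref{eq:Heisenberg} the $r=1$ bracket on the Heisenberg algebra is $h_{1}h'=\ell(h|h')$, which manifestly depends on $\ell$ and is only $\ell$-independent after a rescaling isomorphism. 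Your spectral-sequence argument operates at the $r=0$ level of $\gr C^{\bullet}_{DS}$ and therefore does not compute the $r=1$ bracket on the abutment; no amount of character comparison fixes this, since characters see only the underlying graded vector space.

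Relatedly, the phrase ``with its classical PVA structure, visibly independent of $\ell$'' begs the question. The classical Drinfeld--Sokolov reduction produces a one-parameter family of Poisson vertex structures on $\C[J_{\infty}\CS_{\mrprin}]$ (the classical $W$-algebras at the various levels, related by the classical Miura map), and the content of the proposition is exactly that the Li-filtration degeneration of the quantum $W$-algebra lands on a single member of this family up to isomorphism, for all non-critical $\ell$. To make your approach work you would need to (i) specify the Li filtration on the ghost sector so that the induced differential on $\gr C^{\bullet}_{DS}$ is the classical BRST differential with the correct conformal-weight shifts, (ii) prove degeneration, and (iii) identify which classical bracket the $r=1$ products induce and show it is $\ell$-free after normalizing the generators. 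Step (iii) is the substance of the proposition, and the paper's Miura-map argument is essentially the efficient way to carry it out: the leading symbols of the quantum Miura images of the generators are symmetric polynomials with no $\ell$-dependence, so the only $\ell$-dependence left is the overall normalization of the Heisenberg bracket, which is removed by rescaling.
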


\begin{proof}
We shall use the Miura map
$$\W^{\ell}(\g) \hookrightarrow M_{\wh}(\ell+h^{\vee},0),$$
see \cite{APisa} for the details.
This induces the injective Poisson vertex algebra homomorphism
$$\gr \W^{\ell}(\g) \hookrightarrow \gr M_{\wh}(\ell+h^{\vee},0),$$
where $\gr M_{\wh}(\ell+h^{\vee},0)$ is equipped with 
the Poisson vertex algebra structure described in 
Example \ref{eq:Heisenberg}.
The image of $\gr \W^{\ell}(\g)$ is generated by symmetric polynomials
in $S(\h)=\C[\h^*]\subset \C[J_{\infty}\h^*]$, 
and does not depend on $\ell$.
Hence, the vertex Poisson algebra structure of 
$\gr \W^{\ell}(\g)$
is independent of $\ell$ as long as it is non-critical.
\end{proof}

By Proposition \ref{Pro:independence-of-VPA-structure},
it is sufficient to show Proposition \ref{Pro:generator-of-W-Poisson}
for  a non-critical $\ell$.

Recall the following assertion proved by Frenkel, Kac, Radul and Wang.
\begin{thm}[\cite{FKRW}]
For $\ell=1-k$,
 $\W^{\ell}(\mf{gl}_k) $ is isomorphic to
the simple quotient of $\W_{1+\infty}$-algebra
$\W_{1+\infty}^k$
of central charge $k$.
\end{thm}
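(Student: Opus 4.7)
The plan is to realize both $\W^{1-k}(\mf{gl}_k)$ and $\W^k_{1+\infty}$ inside a common rank-$k$ Heisenberg vertex algebra and identify them there. Recall that $\W_{1+\infty}$ is the universal enveloping vertex algebra of the one-dimensional central extension $\widehat{\mc D}$ of the Lie algebra of differential operators on the circle, and $\W^k_{1+\infty}$ is its simple quotient at central charge $k$.

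I would first realize $\W^k_{1+\infty}$ as the image of the natural action of $\widehat{\mc D}$ on the Fock space of $k$ pairs of complex charged free fermions (the $bc$-system). This action has central charge $k$, and after bosonization the image sits inside a rank-$k$ Heisenberg vertex algebra $\pi$. On the other side, the Miura map for the principal Drinfeld-Sokolov reduction embeds $\W^\ell(\mf{gl}_k)$ into a rank-$k$ Heisenberg vertex algebra whose bilinear form depends affinely on $\ell$; at $\ell = 1-k$ this form matches that of $\pi$, so both algebras naturally embed into the same $\pi$. The next step is to match generating fields: $\W^{1-k}(\mf{gl}_k)$ is freely generated by $k$ fields of weights $1, 2, \ldots, k$ whose Miura images are essentially the normal-ordered elementary symmetric polynomials in the bosonic currents, while on the $\W^k_{1+\infty}$ side the fields coming from the differential operators $(z\partial_z)^i$ for $i = 0, \ldots, k-1$ give $k$ fields of weights $1, \ldots, k$ whose bosonizations agree to leading order with the Miura generators.

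The main obstacle is the truncation phenomenon at integer central charge: one must show that, once one passes to the simple quotient $\W^k_{1+\infty}$, every generator of weight $> k$ becomes a normally-ordered polynomial in the $k$ low-weight generators just described, so that $\W^k_{1+\infty}$ is in fact freely generated by them. This can be established either by identifying the singular vectors in the universal $\W_{1+\infty}$-vacuum module at central charge $k$ and quotienting them out, or via a character computation: both algebras have vacuum character $\prod_{i=1}^{k}\prod_{n\geq i}(1-q^n)^{-1}$, so the surjection obtained from matching the first $k$ generators is forced to be an isomorphism.
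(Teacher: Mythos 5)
The paper does not prove this statement: it is quoted verbatim from \cite{FKRW} and used as a black box in the proof of Proposition \ref{Pro:generator-of-W-Poisson}, so there is no internal argument to compare yours against. That said, your outline does follow the strategy of the cited reference --- realize $\W_{1+\infty}$ at central charge $k$ on the Fock space of $k$ pairs of charged free fermions, bosonize, and match against the Miura image of $\W^{1-k}(\mf{gl}_k)$ inside the same rank-$k$ Heisenberg vertex algebra.

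The gap is in your final step. You propose to close the argument by observing that both algebras have vacuum character $\prod_{i=1}^{k}\prod_{n\geq i}(1-q^n)^{-1}$, but for $\W^{k}_{1+\infty}$ this character is not an available input --- it is essentially equivalent to the theorem. The universal $\W_{1+\infty}$ vacuum module at $c=k$ has character $\prod_{n\geq 1}(1-q^n)^{-n}$, and determining how it truncates in the simple quotient (equivalently, locating the maximal submodule, or proving that every generator of weight greater than $k$ decomposes into normally ordered polynomials in the low-weight ones) is precisely the hard content; you cannot invoke the answer to prove itself. Likewise, your surjection argument needs the image of $\W_{1+\infty}$ acting on the fermionic Fock space to be the \emph{simple} quotient, which is not automatic. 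In \cite{FKRW} both points are settled simultaneously by decomposing the charged free fermion Fock space under the commuting actions of $\widehat{\mf{gl}}_k$ at level one and $\W_{1+\infty}$ at $c=k$ (a vertex-algebraic form of skew Howe duality): the joint decomposition is multiplicity-free with irreducible summands, which yields both the simplicity of the image and its character, after which the comparison with the Miura realization of $\W^{1-k}(\mf{gl}_k)$ proceeds as you describe. Without that duality step, or an explicit singular-vector computation that you mention but do not carry out, the proposal does not close.
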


\begin{proof}[Proof of Proposition \ref{Pro:generator-of-W-Poisson}]
The vertex algebra
$\W_{1+\infty}^c$ is freely generated by fields
$J^m(z)=\sum_{n \in \Z}J^m_{n}z^{-n-1}$, $m=0,1,2,\dots$,
satisfying the OPE's
\begin{align*}
& J^m(z) J^n(w) \\
&\sim \sum_{a=1}^{m+n}([n]_a
J^{m+n-a}(w)-(-1)^a[m]_aJ^{m+n-a}(z)/(z-w)^{a+1})
+
 \frac{(-1)^m m!n! c}{(z-w)^{m+n+2}},
\end{align*}
where
$[n]_a=n(n-1)\cdots (n-a+1)$.
The conformal weight of $J^m(z)$
is $m+1$.
The image of $J^0(z)$  generates 
the rank $1$ Heisenberg subalgebra $\pi$ 
and we have
$\W^{\ell}(\mf{gl}_k)=\W^{\ell}(\mf{sl}_k) \otimes \pi$.

We have
\begin{align*}
& J^m_{0}J^n\equiv 0
\pmod{F^1\W_{1+\infty}^c}, \quad 
J^m_{1}J^n\equiv (m+n)J^{m+n-1}
\pmod{F^1 \W_{1+\infty}^c},
\\& J^m_{r}J^n=([n]_r-(-1)^r[m]_r)J^{m+n-r}\pmod{F^1 \W^c_{1+\infty}}
\end{align*}

It follows that
$\gr \W_{1+\infty}^c$ is generated by 
$J^0$, $J^1$ and $J^2$.
Therefore 
$\gr \W^{1-k}(\mf{sl}_k)$
is generated by the image of $J^1$ and $J^2$.
This completes the proof.
\end{proof}

\section*{Acknowledgments}
The authors would like to thank Toshiyuki Abe, Scott Carnahan, Chongying Dong, 
Atsushi Matsuo and Hiroshi Yamauchi for helpful advice and stimulating discussions. 
The authors learn Lemma 2.1 from Toshiyuki Abe. 
Part of the work was done while C. L. and H. Y. were staying
at Kavli Institute for Theoretical Physics China, Beijing in July and August, 2010, 
T. A, C. L. and H. Y. were staying at National Center for Theoretical Sciences (South), 
Tainan in September, 2010, 
and T. A. and H. Y. were staying at Academia Sinica, Taipei in December, 2011. 
They are grateful to those institutes. 
Tomoyuki Arakawa was partially supported by the JSPS Grant-in-Aid for
Scientific Research No. 25287004 and No. 26610006, 
Ching Hung Lam was partially supported by 
MoST grant 104-2115-M-001-004-MY3 of Taiwan, 
Hiromichi Yamada was partially supported by JSPS Grant-in-Aid for
Scientific Research No. 26400040.

\bibliographystyle{alpha}
\bibliography{math}


\end{document}